\newtheorem{remark}{Remark}[section]
\newtheorem{example}{Example}[section]
\def\ds{\displaystyle}
\title{On Boundary Exact Controllability of One-Dimensional Wave Equations with Weak and Strong Interior Degeneration }
\author{Peter I. Kogut\thanks{Department of Differential Equations, Oles Honchar Dnipro
National University, Gagarin av., 72, 49010 Dnipro,
Ukraine (\tt p.kogut@i.ua)}  \and Olha P. Kupenko\thanks{Department of System Analysis and Control, National
Technical University "Dnipro Polytechnics", Yavornitsky av., 19, 49005 Dnipro, Ukraine;  Institute of Applied and System Analysis, Ihor Sikorsky National Technical University of Ukraine ``Kiev Polytechnical Institute'', Peremogy av., 37, build. 35, 03056 Kiev, Ukraine } ({\tt kupenko.olga@gmail.com}) \and G\"{u}nter Leugering\thanks{Department Mathematik
Lehrstuhl II Universit\"{a}t Erlangen-N\"{u}rnberg Cauerstr. 11
D-91058 Erlangen, Germany (\tt guenter.leugering@fau.de)}}
\begin{document}

\maketitle
\begin{abstract}
In this paper we study exact boundary controllability for a linear wave equation with strong and weak interior degeneration of the coefficient in the principle part of the elliptic operator. The objective is to provide a well-posedness analysis of the corresponding  system and derive conditions for its controllability through boundary actions.  Passing to a relaxed version of the original problem, we discuss existence and uniqueness of solutions, and using the HUM method we derive conditions on the rate of degeneracy for both exact boundary controllability and the lack thereof.
\end{abstract}

\begin{keywords}
Degenerate wave equation, boundary control, existence result, weighted Sobolev spaces, exact controllability.
\end{keywords}

\begin{AMS}
35L80, 49J20, 49J45, 93C73.
\end{AMS}

\pagestyle{myheadings}
\thispagestyle{plain}
\markboth{P.I.~Kogut, O.P.~Kupenko, G.~Leugering} {On Boundary Exact Controllability of Degenerate Wave Equations}

\section{Introduction}
\label{Sec_0}

In this paper we discuss exact boundary controllability  for  one-dimensional degenerate wave equations with a weak and strong  interior degeneration in the principle part of the elliptic operator. Let $[0,T]$ be a given time interval. For simplicity, let $c$ and $d$ be a given pair of real numbers such that $0\le c< 1< d\le 2$. We set
\[
\Omega_1=[c,1),\quad\Omega_2=(1,d],\quad\Omega=(c,d),\quad\text{and}\quad\Omega_0=\Omega\setminus\{1\}.
\]
 
Let $a:\overline{\Omega}\rightarrow\mathbb{R}$ be a given weight function with properties
\begin{enumerate}
	\item[(i)] $a(1)=0$, $a(x)>0$ for all $x\in\overline{\Omega}\setminus\{1\}=\Omega_1\cup\Omega_2$, and there exist subintervals $(x^\ast_1,1)\subset\Omega_1$ and $(1,x^{\ast}_2)\subset\Omega_2$ such that $a(\cdot)$ is monotonically decreasing on $(x^\ast_1,1)$,  monotonically increasing on $(1,x^{\ast}_2)$, and
	\begin{gather}
	\label{0.0a}
	\sup_{x\in [x^\ast_1,1)}\frac{(1-x)|a^\prime(x)|}{a(x)}=\lim_{x\nearrow 1} \frac{(1-x)|a^\prime(x)|}{a(x)}>0,\\
	\label{0.0b}
	\sup_{x\in (1,x^\ast_2]}\frac{(x-1)|a^\prime(x)|}{a(x)}=\lim_{x\searrow 1} \frac{(x-1)|a^\prime(x)|}{a(x)}>0;
	\end{gather}
	\item[(ii)] $a\in C(\overline{\Omega})\cap C^1(\overline{\Omega}\setminus\{1\})$;
	\item[(iii)] $\left(\sqrt{a}\right)_{x}\not\in L^\infty(\Omega)$ whereas $\left(\sqrt{a}\right)^{-1}_{x}\in L^\infty(\Omega)$.
\end{enumerate}

As an example of function $a:\overline{\Omega}\to \mathbb{R}_{+}$ with the above indicated properties (i)--(iii), we may consider the following one:
\begin{equation}
\label{1.0.01}
a(x)=\left\{
\begin{array}{ll}
(1-x)^{2p_1}, & \text{ if }\ x\in [c,1]\\
(x-1)^{2p_2}, & \text{ if }\ x\in (1,d],
\end{array}
\right. \quad\text{ with  $p_1,p_2>0$}.
\end{equation}
It is easy to check that, in this case, properties (i)--(iii) hold with $x_1^\ast=c$, $x_2^\ast=d$, and
\begin{gather*}
\sup_{x\in [x^\ast_1,1)}\frac{(1-x)|a^\prime(x)|}{a(x)}=\lim_{x\nearrow 1} \frac{(1-x)|a^\prime(x)|}{a(x)}=2p_1,\\
\sup_{x\in (1,x^\ast_2]}\frac{(x-1)|a^\prime(x)|}{a(x)}=\lim_{x\searrow 1} \frac{(x-1)|a^\prime(x)|}{a(x)}=2p_2,\\
\left(\sqrt{a}\right)_x\not\in L^\infty(\Omega)\ \text{ and }\ \left(\sqrt{a}\right)^{-1}_x\in L^\infty(\Omega)\ \text{ if $p_1$ and $p_2$ are less than $1$}.
\end{gather*}

We are concerned with the following controlled system
\begin{gather}
\label{0.1b}
y_{tt}-\left(a(x) y_x\right)_x =0\quad\text{in }\ (0,T)\times \Omega,\\
\label{0.1c}
y(t,c)=f_c(t),\quad
y(t,d)=f_d(t)\quad\text{on }\ (0,T),\\
\label{0.1d}
y(0,\cdot)=y_0,\quad y_t(0,\cdot)=y_1\quad\text{ in }\ \Omega,\\
\label{0.1e}
 f_c, f_d\in \mathcal{F}_{ad}=L^2(0,T).
\end{gather}
Here,  $y_0$, and $y_1$ are given functions, and $\mathcal{F}_{ad}$ stands for the class of admissible controls.

The system \eqref{0.1b}-\eqref{0.1e} describes the dynamics of a linear elastic string with out-of-the-plane displacement under the actions of boundary sources $f_c, f_d$ acting on the system as controls through the Dirichlet boundary conditions at $x=c$ and $x=d$. The coefficient $a(x)$ can be interpreted as the spatially varying stiffness (modulus of elasticity) of the elastic string. In contrast to the standard case that is widely studied in the literature (see, for instance, \cite{Russell}), where the stiffness is assumed to be positive and bounded away from zero, we assume that the string $[c,d]$ has a defect at the internal point $x_0=1$. In case the defect occurs at the endpoint $x_0=c$, the problem has been investigated in \cite{Cannarsa}. In the latter case, the spatial operator is related to a classical singular Sturm-Liouville-problem that has been treated already by Weyl in \cite{Weyl}. Degeneration in that context is related to the notions of limit-point and limit-cycle. In \cite{Cannarsa} the authors define 
\begin{equation}\label{mu}
\mu_a:= \sup\limits_{0<x\leq \ell}\frac{x|a'(x)|}{a(x)}
\end{equation}
for the problem on the interval $[0,\ell]$. The problem is called weakly damaged if $0\leq \mu_a< 1$ in which case $\frac{1}{a}\in L^1$, and strongly damaged in case $1\leq \mu_a< 2$. The authors show, among other things, one-sided boundary observability and consequently one-sided boundary exact controllability if $\mu_a<2$ with an observability/controllability time approaching $+\infty$ as $\mu_a\rightarrow 2$. Thus, for $\mu_a\geq 2$, these properties are lost.

Using the Liouville transform (see \cite{Eringen} 1954), it is possible to transform the system above into a homogeneous wave equation with singular potential on an interval that tends to infinity if $\mu_a\geq 2$. See e.g. \cite{Fardigola}, where controllability properties are investigated based on this transformation. Working in the $L^\infty$-framework, the author obtains similar results as in \cite{Cannarsa}.

The authors of this article are not aware of any publication where in-span degeneration of the wave equation is treated, in particular in the context of controllability or observability. For the parabolic case see \cite{Cannarsa2019}.
The main question that we are going to discuss in this paper, therefore,  is how the defect at the internal point $x_0=1$ affects the transmission conditions at the singular (damage) point and the corresponding solution of the system \eqref{0.1b}--\eqref{0.1e} as well as its observability or controllability properties. 

Such analysis could be important for many applications. In particular, for 
the cloaking problem (building of devices that lead to lack of obeservability) \cite{Green},
the evolution of damage in materials \cite{KL_2015}, optimization problems for elastic bodies arising in contact mechanics, coupled systems, composite materials \cite{KL}, where 'life-cycle-optimization' appears as a challenge.

The indicated type of degeneracy raises a number of new questions related to the well-posedness of the hyperbolic equations in suitable functional spaces as well as new estimates for their solutions. Hence, new tools are necessary for the analysis of the corresponding optimal control problems. It should be emphasized here that boundary value problems for degenerate elliptic and parabolic equations have received a lot of attention in the last years (see, for instance, \cite{Cannarsa2019,Canic,Colombo,MarMinRom,Rukav,Santambrogio}).
As for the control issue for degenerate wave equations, we already  mentioned \cite{Cannarsa,Gueye} (see also \cite{Lasiecka} for the sensitivity analysis of optimal control problems for wave equations in domain with defects).

The purpose of this paper is to provide a qualitative analysis of system \eqref{0.1b}--\eqref{0.1e}, prove an exact controllability result, and investigate how the degree of degeneracy in the principle coefficient $a(x)$ affects the system \eqref{0.1b}--\eqref{0.1e} and its solution.
In contrast to the recent results \cite{Cannarsa}, where the authors study controllability and observability for degenerate equation of the form \eqref{0.1b} with the degeneracy of \eqref{0.1b} at the boundary $x=c=0$, 
we focus on the case where the 'damaged' point is internal. So, our core idea is to to pass from the original initial-boundary value problem \eqref{0.1b}--\eqref{0.1e} to a relaxed version, namely, to some transmission problem with appropriate compatibility conditions at the 'damaged' point. We show that these conditions play a crucial role and essentially depend on the 'degree of degeneracy' (for some generalization we refer to \cite{KKL}). In multi-dimensional situation, when the 'damage zone' does not split the domain $\Omega$ onto several disconnected subdomains, there is an expectation that the boundary controllability of the corresponding hyperbolic equation with interior degeneration could be true under some suitable ’degree of degeneracy’ in the diffusion coefficients. Although, in general, the problem of boundary controllability in multi-dimensional case appears as a challenge. This issue will be considered in details in the forthcoming paper.  

In Section~\ref{Sec 1} we introduce a special class of weighted Sobolev spaces that are  associated with the original initial-boundary value problem. It allows us not only to to study in detail some properties of their elements in the regions which are in close vicinity to the 'damaged' point, but also propose an appropriate relaxation for to the initial-boundary value problems \eqref{0.1b}--\eqref{0.1d}. In Section~\ref{Sec 2}, we mainly focus on the well-posedness of the proposed relaxation for the original controlled system. It allows us to consider in Section~\ref{Sec_3} the issues related to the boundary observability of degenerate wave equations. In Section~\ref{Sec 4} we discuss the questions of exact and null boundary controllability of the original degenerate system and the lack of these properties for strong degeneration.

\section{Preliminaries}
\label{Sec 1}

To specify the original controlled system \eqref{0.1b}--\eqref{0.1e} and fix the main ideas, we begin with some preliminaries and assumptions. Let $a:\overline{\Omega}\rightarrow\mathbb{R}$ be a given weight function with properties (i)--(iii). For $u$ smooth, define the functional $\|\cdot\|_a$ as follows
\[
\|u\|_a=\left(\int_{\Omega}\left[u^2+au_x^2\right]\,dx\right)^{1/2}.
\]
Let $W^{1,2}(\Omega)$ be the standard Sobolev space. We denote by $H^1_a(\Omega)$, $H^1_{a,0}(\Omega)$, and $W^1_a(\Omega)$ the spaces which are defined as follows
\begin{itemize}
	\item $H^1_a(\Omega)$ is the closure of the set $\varphi\in C^\infty(\overline{\Omega})$ with respect to the $\|\cdot\|_a$-norm;
	\item $H^1_{a,0}(\Omega)$ is the closure of the set $C^\infty_c(\Omega)$ with respect to the $\|\cdot\|_a$-norm;
	\item $W^1_a(\Omega)$ is the space of maps $u\in L^2(\Omega)$ with distributional derivatives $u_x$ that satisfy $u_x\in L^2(\Omega,a\,dx)\cap L^1(\Omega)$, where 
	$$
	L^2(\Omega,a\,dx)=\left\{v:\Omega\rightarrow \mathbb{R}\ :\ v\ \text{ is measurable and }\ \int_{\Omega} a v^2\,dx<+\infty \right\}.
	$$ 
\end{itemize}

First note that since $C^\infty_c(\Omega)\subset C^\infty(\overline{\Omega})$, we have that $H^1_{a,0}(\Omega)\subset H^1_{a}(\Omega)$. Moreover, due to compactness of the embedding 
$$
H^1_{a}(\left(c,1-\varepsilon\right)\cup \left( 1+\varepsilon,d\right) )\hookrightarrow C^{0,1}([c,1-\varepsilon]\cup[1+\varepsilon,d]),\quad\text{ for all $\varepsilon>0$ small enough},
$$ 
we see that, if $y\in H^1_{a}(\Omega)$, then $y(\cdot)$ is an absolutely continuous function in $\overline{\Omega}\setminus\{1\}$. So, the conditions $y(c)=0$ and $y(d)=0$ are consistent for all $y\in H^1_{a,0}(\Omega)$. Therefore, $H^1_{a,0}(\Omega)$ can be equivalently defined as the closed subspace of $H^1_a(\Omega)$ such that	
\[
H^1_{a,0}(\Omega):=\left\{y\in H^1_a(\Omega)\ :\ y(c)=y(d)=0 \right\}.
\]
It is worth to notice that, unlike classical Sobolev space,  the subspace of smooth functions are not necessarily dense in $W^1_a(\Omega)$. So, for 'typical' weight functions $a:\overline{\Omega}\rightarrow\mathbb{R}$ with properties  (i)--(iii) it is unknown whether the identity $H^1_a(\Omega)= W^1_a(\Omega)$ is valid (for the corresponding examples we refer to \cite{Chiado_94,  Zh_94}). Therefore, it would be plausible to suppose that $H^1_a(\Omega)\subseteq W^1_a(\Omega)$, in general. 

We also have $H^1_{a,0}(\Omega)\subseteq W^1_{a,0}(\Omega)$, where
$W^1_{a,0}(\Omega)=\left\{y\in W^1_a(\Omega)\ :\ y(c)=y(d)=0 \right\}$.

Setting
\begin{gather}
\label{1.1a}
\mu_{1,a}:=\sup_{x\in [x^\ast_1,1)}\frac{(1-x)|a^\prime(x)|}{a(x)},\quad
\mu_{2,a}:=\sup_{x\in (1,x^\ast_2]}\frac{(x-1)|a^\prime(x)|}{a(x)},
\end{gather}
we deduce from properties (i)--(ii) that there exist constants $\kappa_{1,a}\ge 1$ and $\kappa_{2,a}\ge 1$ such that
\begin{equation}
\label{1.1b}
\sup_{x\in \Omega_1}\frac{(1-x)|a^\prime(x)|}{a(x)}=\kappa_{1,a}\mu_{1,a},\quad
\sup_{x\in \Omega_2}\frac{(x-1)|a^\prime(x)|}{a(x)}=\kappa_{2,a}\mu_{2,a}.
\end{equation}

Then the common characteristic of the weight functions $a:\overline{\Omega}\rightarrow\mathbb{R}$ with properties (i)--(iii) can be summarizing as follows (see see Theorems~3.1 and 3.2 in \cite{KKLW} for comparison).
\begin{proposition}
	\label{Prop 1.0}
	Let $a:\overline{\Omega}\rightarrow\mathbb{R}$ be a weight function satisfying properties (i)--(iii). Then 
	\begin{gather}
	\label{1.0.1b}
	a(x) \ge a(c)\frac{(1-x)^{\kappa_{1,a}\mu_{1,a}}}{(1-c)^{\kappa_{1,a}\mu_{1,a}}},\quad\forall\,x\in [c,1]\subset [0,1],\\
	\label{1.0.1c}
	a(x) \ge a(d)\frac{(x-1)^{\kappa_{2,a}\mu_{2,a}}}{(d-1)^{\kappa_{2,a}\mu_{2,a}}},\quad\forall\,x\in [1,d]\subset [1,2].
	\end{gather}
\end{proposition}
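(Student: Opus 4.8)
The plan is to derive the two inequalities by integrating the differential inequality that is hidden in the definition of $\kappa_{1,a}\mu_{1,a}$ and $\kappa_{2,a}\mu_{2,a}$. I will treat the two estimates separately, since they are symmetric; let me describe the argument for \eqref{1.0.1b} on $[c,1]$, the reasoning for \eqref{1.0.1c} on $[1,d]$ being identical after the reflection $x\mapsto 2-x$.

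\medskip

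First I would observe that from \eqref{1.1b} we have, for every $x\in\Omega_1=[c,1)$,
\[
\frac{(1-x)|a'(x)|}{a(x)}\le \kappa_{1,a}\mu_{1,a}.
\]
By property (i) the function $a$ is positive on $\Omega_1$, and since $\kappa_{1,a}\mu_{1,a}\ge\mu_{1,a}>0$ the quantity on the right is a fixed positive constant; write $\gamma_1:=\kappa_{1,a}\mu_{1,a}$ for brevity. The key step is then to rewrite this as a one-sided bound on the logarithmic derivative: since $a>0$ and $1-x>0$ on $\Omega_1$,
\[
\Big|\frac{d}{dx}\log a(x)\Big|=\frac{|a'(x)|}{a(x)}\le \frac{\gamma_1}{1-x},
\]
and in particular $-\dfrac{a'(x)}{a(x)}\le\dfrac{\gamma_1}{1-x}$, i.e. $\dfrac{d}{dx}\log a(x)\ge-\dfrac{\gamma_1}{1-x}=\dfrac{d}{dx}\big(\gamma_1\log(1-x)\big)$. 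Next I would integrate this inequality over $[c,x]$ for an arbitrary $x\in[c,1)$, using that $a\in C^1(\overline{\Omega}\setminus\{1\})$ so the fundamental theorem of calculus applies on $[c,x]\subset[c,1)$. This gives
\[
\log a(x)-\log a(c)\ge \gamma_1\log(1-x)-\gamma_1\log(1-c),
\]
and exponentiating yields exactly
\[
a(x)\ge a(c)\,\frac{(1-x)^{\gamma_1}}{(1-c)^{\gamma_1}}
\]
for all $x\in[c,1)$; the bound extends to $x=1$ trivially since $a(1)=0$ and the right-hand side vanishes there. Finally, the containment $[c,1]\subset[0,1]$ is automatic from $0\le c<1$, so the statement is as claimed. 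The estimate \eqref{1.0.1c} follows by the same computation applied on $(1,d]$ with $1-x$ replaced by $x-1$, integrating from $d$ down to $x$ and using \eqref{1.1b} with $\gamma_2:=\kappa_{2,a}\mu_{2,a}$.

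\medskip

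I do not expect any serious obstacle here; the only points requiring a little care are (a) checking that the logarithmic derivative is well defined and the integration is legitimate on the closed subinterval $[c,x]$ bounded away from the singular point $x=1$, which is guaranteed by $a\in C^1(\overline{\Omega}\setminus\{1\})$ together with positivity of $a$ on $\Omega_1\cup\Omega_2$, and (b) making sure the inequality is oriented correctly when we divide by $a'$ — it is cleanest to avoid dividing by $a'$ altogether and instead work directly with the bound $\big|(\log a)'\big|\le\gamma_1/(1-x)$, keeping only the lower side $(\log a)'\ge-\gamma_1/(1-x)$ that we actually need. Properties \eqref{0.0a}--\eqref{0.0b} and the monotonicity on $(x_1^\ast,1)$, $(1,x_2^\ast)$ are not needed for this particular bound beyond what is already encoded in the finiteness of the suprema $\mu_{1,a},\mu_{2,a}$ and hence of $\kappa_{1,a}\mu_{1,a},\kappa_{2,a}\mu_{2,a}$ via \eqref{1.1b}.
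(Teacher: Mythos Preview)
Your proposal is correct and follows essentially the same route as the paper's proof: both start from the bound $(1-x)|a'(x)|/a(x)\le\kappa_{1,a}\mu_{1,a}$ implied by \eqref{1.1b}, rewrite it as a one-sided inequality for $a'/a=(\log a)'$, integrate over $[c,x]\subset[c,1)$, and then handle \eqref{1.0.1c} symmetrically on $(1,d]$. Your write-up is in fact a bit more careful than the paper's in noting the trivial extension to $x=1$ and in avoiding any division by $a'$.
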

\begin{proof}
	Making use of representation \eqref{1.1b}, we get
	\begin{equation}
	\label{1.0.A1}
	(1-x)a^\prime(x)\stackrel{\text{by \eqref{1.1b}}}{\ge} -\kappa_{1,a}\mu_{1,a} a(x),\quad\forall\,x\in\Omega_1.
	\end{equation}
	Integrating this inequality over $[c,x]\subset [c,1)$, where $c>0$ by the initial assumptions,
	\[
	\int_c^x \frac{a^\prime}{a}\,ds\ge - \kappa_{1,a}\mu_{1,a}\int_c^x \frac{1}{1-s}\,ds,\quad\forall\, x\in[c,1),
	\]
	we arrive at the inequality \eqref{1.0.1b}.	Arguing in a similar manner, we have
	\begin{equation}
	\label{1.0.A2}
	(x-1)a^\prime(x)\stackrel{\text{by \eqref{1.1b}}}{\le}   \kappa_{2,a}\mu_{2,a}  a(x),\quad\forall\, x\in (1,d].
	\end{equation}
	Therefore,
	\[
	\int_x^d \frac{a^\prime}{a}\,ds\le \kappa_{2,a}\mu_{2,a} \int_x^d \frac{1}{s-1}\,ds,\quad\forall\, x\in (1,d].
	\]
	From this and the fact that $d<2$ we deduce \eqref{1.0.1c}.
\end{proof}

Arguing in a similar manner, it is easy to establish the following inequalities. 
\begin{gather}
\label{1.0.1d}
a(x) \ge a(x_1^\ast)\frac{(1-x)^{\mu_{1,a}}}{(1-x_1^\ast)^{\mu_{1,a}}},\quad\forall\,x\in [x_1^\ast,1]\subset [c,1],\\
\label{1.0.1e}
a(x) \ge a(x_2^\ast)\frac{(x-1)^{\mu_{2,a}}}{(x_2^\ast-1)^{\mu_{2,a}}},\quad\forall\,x\in [1,x_2^\ast]\subset [1,d].
\end{gather}

The next result is crucial for our further consideration and it explores some remarkable properties of the weight functions $a:\overline{\Omega}\rightarrow\mathbb{R}$ satisfying conditions (i)--(iii).

\begin{proposition}
	\label{Prop 1.0a}
	Let $a:\overline{\Omega}\rightarrow\mathbb{R}$ be a weight function satisfying properties (i)--(iii). Then the following assertions hold true:
	\begin{gather}
	\label{1.0.4a}
	0< \mu_{1,a}<2\quad\text{and}\quad 0< \mu_{2,a}<2.
	\end{gather}
\end{proposition}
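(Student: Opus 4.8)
The positivity assertions follow at once: by \eqref{0.0a}--\eqref{0.0b} the numbers $\mu_{1,a}$ and $\mu_{2,a}$ defined in \eqref{1.1a} coincide with the one-sided limits of $(1-x)|a^\prime(x)|/a(x)$ and $(x-1)|a^\prime(x)|/a(x)$ as $x\to1$, and those limits are strictly positive. Since the assumptions on $a$ are symmetric about the degeneration point, it suffices to prove $\mu_{1,a}<2$; the bound $\mu_{2,a}<2$ then follows word for word on $\Omega_2$.

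The plan is to trap $a$ near $x=1$ between two powers of $1-x$ whose exponents are controlled by property (iii) and by $\mu_{1,a}$, and to compare them. For the lower bound I use the part $\left(\sqrt{a}\right)^{-1}_{x}\in L^\infty(\Omega)$ of (iii): it produces $\delta_0>0$ with $|\left(\sqrt{a}\right)_{x}|\ge\delta_0$ a.e.\ on $\Omega$, so $\left(\sqrt{a}\right)_{x}$, which is continuous and nowhere zero on the connected set $\Omega_1$, has constant sign there, necessarily negative by the monotone decrease in (i). Hence $\left(\sqrt{a}\right)_{x}\le-\delta_0$ on $\Omega_1$; integrating over $(x,x^\prime)$ with $x<x^\prime<1$ and letting $x^\prime\to1$, where $\sqrt{a(x^\prime)}\to0$ by continuity, gives
\begin{equation*}
\sqrt{a(x)}\ge\delta_0(1-x),\qquad\text{that is}\qquad a(x)\ge\delta_0^{2}(1-x)^{2},\qquad x\in\Omega_1.
\end{equation*}

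For the matching upper bound I use that, by the limit in \eqref{0.0a}, for every $\e>0$ there is $\bar x\in(x^\ast_1,1)$ with $(1-x)|a^\prime(x)|/a(x)\ge\mu_{1,a}-\e$ on $(\bar x,1)$, i.e.\ $a^\prime/a\le-(\mu_{1,a}-\e)/(1-x)$ there; integrating this inequality exactly as in the proof of Proposition~\ref{Prop 1.0} yields a constant $C_\e>0$ with $a(x)\le C_\e(1-x)^{\mu_{1,a}-\e}$ for $x$ close to $1$. Comparing with the lower bound, $\delta_0^{2}(1-x)^{2}\le C_\e(1-x)^{\mu_{1,a}-\e}$ near $x=1$; were $\mu_{1,a}-\e>2$, the right-hand side would be $o\!\big((1-x)^{2}\big)$ as $x\to1$, which is impossible. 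Hence $\mu_{1,a}-\e\le2$ for all $\e>0$, i.e.\ $\mu_{1,a}\le2$.

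It remains to discard the borderline value $\mu_{1,a}=2$, and this I expect to be the crux of the argument. Here one brings in the other half of (iii), $\left(\sqrt{a}\right)_{x}\notin L^\infty(\Omega)$, together with a sharper, non-power description of the degeneration near $x=1$ encoded in the supremum--limit identity \eqref{0.0a}: writing $\left(\sqrt{a}\right)_{x}=a^\prime(x)/\big(2\sqrt{a(x)}\big)$ and using $|a^\prime(x)|=\big[(1-x)|a^\prime(x)|/a(x)\big]\,a(x)/(1-x)$ with the estimates above, one must show that $\mu_{1,a}=2$ is incompatible with $\left(\sqrt{a}\right)_{x}$ failing to be locally bounded up to $x=1$ on $\Omega_1$ --- equivalently, that the degeneration rate at $x=1$ cannot be of borderline logarithmic type. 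As $\left(\sqrt{a}\right)_{x}$ is continuous, hence locally bounded, on $\overline{\Omega}\setminus\{1\}$ and the same reasoning applies on $\Omega_2$, this forces $\mu_{1,a}<2$ and, symmetrically, $\mu_{2,a}<2$. The delicate point is precisely this last step: the power bounds for $a$ determine the exponent only up to the arbitrarily small loss $\e$, so excluding the borderline case cannot be done with the inequality $\mu_{1,a}\le2$ alone but requires the full strength of (iii).
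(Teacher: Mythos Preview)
Your derivation of the lower bound $a(x)\ge\delta_0^{2}(1-x)^2$ on $\Omega_1$ is exactly the paper's: both integrate $|(\sqrt a)_x|\ge\delta_0$ from $x$ toward $1$ (the paper first normalises so that $\delta_0=1$, obtaining $a(x)\ge(x-1)^2$ on $[x_1^\ast,x_2^\ast]$). Your additional upper bound $a(x)\le C_\e(1-x)^{\mu_{1,a}-\e}$, obtained by integrating the near-limit inequality $(1-x)|a'|/a\ge\mu_{1,a}-\e$, is not made explicit in the paper; comparing it with the lower bound correctly gives $\mu_{1,a}\le 2$.

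The proof, however, is genuinely incomplete at the point you yourself flag: the borderline value $\mu_{1,a}=2$ is not ruled out. The paper does not proceed the way you sketch (first prove $\mu_{1,a}\le 2$, then exclude equality). Instead, immediately after the quadratic lower bound it invokes the unboundedness $(\sqrt a)_x\notin L^\infty(\Omega)$ together with the monotonicity of $a$ near $1$ to assert that $a(x)=\mathcal O(|x-1|^{2-\gamma})$ near $1$ for some $\gamma\in(0,2)$, i.e.\ that $a$ has power-type behaviour with exponent strictly below $2$; it then reads off $\mu_{i,a}=2-\gamma<2$ directly from \eqref{1.1a}. So the strict inequality comes from an asserted asymptotic exponent for $a$, not from a separate endpoint argument.

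Your instinct that this is the delicate step is correct. Writing $|(\sqrt a)_x|=\tfrac12\,\mu(x)\,\sqrt{a(x)}/(1-x)$ with $\mu(x)\to\mu_{1,a}$, as you suggest, is indeed the natural way to connect (iii) with $\mu_{1,a}$; but neither your sketch nor the paper's one-line passage supplies a fully rigorous exclusion of borderline logarithmic degenerations such as $a(x)\sim(1-x)^2|\log(1-x)|$, for which the ratio $(1-x)|a'|/a$ tends to $2$ while both halves of (iii) can be arranged to hold near $x=1$. If you want to match the paper, the missing ingredient is precisely the claim that (iii) forces a genuine power asymptotic $a(x)\sim|x-1|^{2-\gamma}$ with $\gamma>0$; this is what the paper states, and what your argument would need to establish rather than merely anticipate.
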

\begin{proof}
Let $a:\overline{\Omega}\rightarrow\mathbb{R}$ be a given function with properties (i)--(iii). Setting
$$
k:=\left\|\left(\sqrt{a}\right)_x^{-1}\right\|^2_{L^\infty(\Omega)}\quad\text{ and }\quad\widehat{a}(x):=ka(x)\ \text{ for all $x\in\Omega$},
$$
we see that the function $\widehat{a}:\overline{\Omega}\rightarrow\mathbb{R}$ possesses all properties (i)--(iii) and
the direct calculations show that
$
\mu_{i,a}=\mu_{i,\widehat{a}}$ for $i=1,2$. 
Moreover, in this case, we have
\[
\left\|\left(\sqrt{\widehat{a}}\right)_x^{-1}\right\|_{L^\infty(\Omega)}=
\frac{1}{\sqrt{k}}\left\|\left(\sqrt{a}\right)_x^{-1}\right\|_{L^\infty(\Omega)}=1.
\]
Hence, without loss of generality, we can suppose that the function $a:\overline{\Omega}\rightarrow\mathbb{R}$ is such that
\begin{equation}
\label{2.21}
\left\|\left(\sqrt{a}\right)_x^{-1}\right\|_{L^\infty(\Omega)}\le 1.
\end{equation}
As a consequence of this condition, we have
\begin{equation}
\label{2.22}
\sup_{x\in [c,1)}\frac{2\sqrt{a(x)}}{|a^\prime(x)|}\le 1\quad\text{and}\quad
\sup_{x\in (1,d]}\frac{2\sqrt{a(x)}}{|a^\prime(x)|}\le 1.
\end{equation}
So, we can suppose that
\begin{equation}
\label{2.23}
2\sqrt{a(x)}\le |a^\prime(x)|,\quad\forall\,x\in[x^\ast_1,x^{\ast}_2].
\end{equation}
Since $a(\cdot)$ is a monotonically decreasing function on $(x^\ast_1,1)$ and $a(\cdot)$ is a monotonically increasing function on $(1,x^{\ast}_2)$, it follows from \eqref{2.23} that
\[
a^\prime(x)\le -2\sqrt{a(x)},\quad \forall\,x\in [x^\ast_1,1)\quad\text{and}\quad
a^\prime(x)\ge 2\sqrt{a(x)},\quad \forall\,x\in(1,x^{\ast}_2].
\]
Then, after integration, we obtain
\begin{gather}
\label{2.24a}
\int_x^{1} \frac{a^\prime(s)}{\sqrt{a(s)}}\,ds\le -2(1-x),\quad\forall\,x\in [x^\ast_1,1),\\
\label{2.24b}
\int_{1}^x \frac{a^\prime(s)}{\sqrt{a(s)}}\,ds\ge 2(x-1),\quad\forall\,x\in (1,x^{\ast}_2].
\end{gather}
Taking into account that $a(1)=0$, we deduce from \eqref{2.24a}--\eqref{2.24b} that
\begin{equation*}
\sqrt{a(x)}\ge 1-x,\quad\forall\,x\in [x^\ast_1,1)\quad\text{and}\quad
\sqrt{a(x)}\ge x-1,\quad\forall\,x\in (1,x^{\ast}_2],
\end{equation*}
and, as a consequence of these inequalities, we have
\begin{equation}
\label{2.25}
a(x)\ge (x-1)^2,\quad\forall\,x\in[x^\ast_1,x^{\ast}_2].
\end{equation}
Utilizing the monotonicity property of $a(\cdot)$ around the point $1$ and the fact that $\left(\sqrt{a}\right)_{x}\not\in L^\infty(\Omega)$, we deduce from \eqref{2.25} that there exists a positive value $\gamma\in (0,2)$ such that
\begin{equation}
\label{2.27}
a(x)=\mathcal{O}\left(|x-1|^{2-\gamma}\right)\quad\text{ in $[x^\ast_1,x^{\ast}_2]$},
\end{equation}
that is, $a(x)\thicksim |x-1|^{2-\gamma}$ near the degeneration point $1$.
Therefore, in view of representation \eqref{1.1a}, we finally have
\begin{align}
\label{2.26a}
\mu_{1,a}&:=\sup_{x\in [x^\ast_1,1)} \frac{|x-1||a^\prime(x)|}{a(x)}=2-\gamma<2,\\
\label{2.26b}
\mu_{2,a}&:=\sup_{x\in (1,x^\ast_2]} \frac{|x-1||a^\prime(x)|}{a(x)}=2-\gamma< 2.
\end{align}
\end{proof}

Let $V^1_{a,0}(\Omega)$ be some intermediate space with $H^1_{a,0}(\Omega)\subseteq V^1_{a,0}(\Omega)\subseteq W^1_{a,0}(\Omega)$. 
Our next intention is to show that due to the properties (i)--(iii) of the weight function $a:\overline{\Omega}\rightarrow\mathbb{R}$, $V^1_{a,0}(\Omega)$ is a Hilbert space with respect to the scalar product
\begin{equation}
\label{2.2}
\left<u,v\right>_{H^1_{a,0}(\Omega)}=\int_\Omega a(x) u^\prime(x) v^\prime(x)\,dx,\quad\forall\, u,v\in V^1_{a,0}(\Omega).
\end{equation}
To do so, it is enough to establish some version of Friedrichs's inequality. Following in many aspects \cite{Cannarsa, KKLW}, we will do it in two different manners (see Lemmas~\ref{Prop 1.0b} and \ref{Prop 1.0c} below)

\begin{lemma}
	\label{Prop 1.0b}
	Let $a:\overline{\Omega}\rightarrow\mathbb{R}$ be a given function with properties (i)--(iii). Let $V^1_{a,0}(\Omega)$ be some intermediate space with $H^1_{a,0}(\Omega)\subseteq V^1_{a,0}(\Omega)\subseteq W^1_{a,0}(\Omega)$. Let $u$ be an arbitrary element of  $V^1_{a,0}(\Omega)$. 
	Then the following inequality
	\begin{equation}
	\label{2.2.a}
	\|u\|_{L^2(\Omega)}\le D_a\|u^\prime\|_{L^2(\Omega,a\,dx)}=D_a\left(\int_{\Omega} a(x)u_x^2(x)\,dx\right)^{1/2}
	\end{equation}
	holds true for all $u\in V^1_{a,0}(\Omega)$, where  
	\begin{align}
	\label{2.4ab}
	D^2_a&=\max\{D_{1,a},D_{2,a}\},\\
	\label{2.4a}
	D_{1,a}&=\frac{\left(x_1^\ast-c\right)\left(2-x_1^\ast-c\right)}{2\min\limits_{x\in[c,x_1^\ast]} a(x) }+\frac{\left(1-x_1^\ast\right)^{2}}{a(x_1^\ast)\left(2-\mu_{1,a}\right)},\\
	\label{2.4b}
	D_{2,a}&=\frac{\left(d-x_2^\ast\right)\left(d+x_2^\ast-2\right)}{2\min\limits_{x\in[x_2^\ast,d]} a(x) }+\frac{\left(x_2^\ast-1\right)^{2}}{a(x_2^\ast)\left(2-\mu_{2,a}\right)}.
	\end{align}
\end{lemma}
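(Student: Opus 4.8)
The plan is to prove the Friedrichs-type inequality \eqref{2.2.a} by splitting the interval $\Omega=(c,d)$ into four pieces, $[c,x_1^\ast]$, $[x_1^\ast,1)$, $(1,x_2^\ast]$, and $[x_2^\ast,d]$, and estimating $\int u^2\,dx$ on each of them; the first constant $D_{1,a}$ will collect the contributions from $[c,x_1^\ast]\cup[x_1^\ast,1)$ and $D_{2,a}$ those from $(1,x_2^\ast]\cup[x_2^\ast,d]$. By a density argument it suffices to treat $u\in C^\infty_c(\Omega)$, or more precisely a dense subset of $V^1_{a,0}(\Omega)$; since elements of $V^1_{a,0}(\Omega)\subseteq W^1_{a,0}(\Omega)$ are absolutely continuous on $\overline\Omega\setminus\{1\}$ and vanish at $c$ and $d$, the pointwise computations below make sense directly, with $u(x)=\int_c^x u^\prime(s)\,ds$ for $x\in[c,1)$ and $u(x)=-\int_x^d u^\prime(s)\,ds$ for $x\in(1,d]$.

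On the ``regular'' piece $[c,x_1^\ast]$, where $a$ is bounded below by $m_1:=\min_{[c,x_1^\ast]}a>0$, I would write $u(x)^2=\left(\int_c^x u^\prime\right)^2\le (x-c)\int_c^x (u^\prime)^2\,ds\le \frac{x-c}{m_1}\int_c^{x_1^\ast} a(u^\prime)^2\,ds$ by Cauchy--Schwarz, then integrate in $x$ over $[c,x_1^\ast]$ to pick up the factor $\frac12(x_1^\ast-c)^2$; combined with $x_1^\ast-c\le 2-x_1^\ast-c$ (which holds since $c\ge 0$, i.e. $x_1^\ast-c\le x_1^\ast\le 1\le 2-c-x_1^\ast$ is NOT generally true — rather one keeps $\frac12(x_1^\ast-c)^2$ and notes $(x_1^\ast-c)^2\le(x_1^\ast-c)(2-x_1^\ast-c)$ because $x_1^\ast-c\le 1$ forces $x_1^\ast-c\le 2-(x_1^\ast-c)\le 2-x_1^\ast-c+... $ — I would simply verify the elementary inequality $(x_1^\ast-c)\le(2-x_1^\ast-c)$, equivalently $x_1^\ast\le 1$, which holds) this yields the first summand of $D_{1,a}$ times $\int_{\Omega}a(u^\prime)^2$. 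On the degenerate piece $[x_1^\ast,1)$, I would use the lower bound \eqref{1.0.1d}, namely $a(x)\ge a(x_1^\ast)(1-x)^{\mu_{1,a}}(1-x_1^\ast)^{-\mu_{1,a}}$, together with a Hardy-type estimate: writing $u(x)=-\int_x^1 u^\prime$ (legitimate since $u$ extends continuously to $1$ from the left with value $0$, because $a(u^\prime)^2\in L^1$ and $a\gtrsim(1-x)^{\mu_{1,a}}$ with $\mu_{1,a}<2$ forces $\int_x^1(u^\prime)^2(1-s)^{... }$ ... — more directly one shows $u(1^-)=0$), Cauchy--Schwarz gives $u(x)^2\le\left(\int_x^1\frac{ds}{a(s)}\right)\int_x^1 a(u^\prime)^2\,ds$; then $\int_{x_1^\ast}^1 u(x)^2\,dx\le\left(\int_{x_1^\ast}^1\int_x^1\frac{ds}{a(s)}\,dx\right)\int_{x_1^\ast}^1 a(u^\prime)^2\,dx$, and evaluating the double integral with the bound on $1/a$ produces exactly $\frac{(1-x_1^\ast)^2}{a(x_1^\ast)(2-\mu_{1,a})}$ — this is where $\mu_{1,a}<2$ from Proposition~\ref{Prop 1.0a} is essential for convergence. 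The symmetric computation on $(1,x_2^\ast]$ and $[x_2^\ast,d]$ using \eqref{1.0.1e} and $\min_{[x_2^\ast,d]}a$ gives $D_{2,a}$.

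Adding the four estimates and bounding $\int_{x_1^\ast}^1 a(u^\prime)^2 + \int_c^{x_1^\ast} a(u^\prime)^2 \le \int_c^1 a(u^\prime)^2\le\int_\Omega a(u^\prime)^2$ (and likewise on the right half) shows $\|u\|_{L^2(c,1)}^2\le D_{1,a}\int_\Omega a(u^\prime)^2$ and $\|u\|_{L^2(1,d)}^2\le D_{2,a}\int_\Omega a(u^\prime)^2$, hence $\|u\|_{L^2(\Omega)}^2\le\max\{D_{1,a},D_{2,a}\}\int_\Omega a(u^\prime)^2=D_a^2\|u^\prime\|_{L^2(\Omega,a\,dx)}^2$, which is \eqref{2.2.a}. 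The main obstacle, and the point requiring care rather than mere calculation, is the justification that an arbitrary $u\in V^1_{a,0}(\Omega)$ — which need not be smooth, since smooth functions need not be dense in $W^1_a(\Omega)$ — attains the value $0$ as a limit at the degeneration point $x=1$ from each side, so that the representation $u(x)=-\int_x^1 u^\prime$ on $[x_1^\ast,1)$ is valid; this follows because the finiteness of $\int_\Omega a(u^\prime)^2$ together with the lower bound $a(x)\ge a(x_1^\ast)(1-x)^{\mu_{1,a}}(1-x_1^\ast)^{-\mu_{1,a}}$ and $\mu_{1,a}<2$ forces, via Cauchy--Schwarz on $[x,1)$, that $u(x)-u(x^\prime)\to 0$ as $x,x^\prime\nearrow 1$, so the one-sided limit $u(1^-)$ exists and finiteness of $\int_c^1 u^2$ then pins it to $0$; the analogous argument handles $u(1^+)=0$. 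Once these limits are secured, the rest is the bookkeeping of elementary integrals sketched above.
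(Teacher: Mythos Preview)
Your proof has a genuine gap on the degenerate piece $[x_1^\ast,1)$: the representation $u(x)=-\int_x^1 u'(s)\,ds$ relies on $u(1^-)=0$, and this is simply false for general $u\in V^1_{a,0}(\Omega)$. Take $c=0$, $d=2$, $a(x)=|x-1|^{1/2}$ and $u(x)=x(2-x)$: then $u(0)=u(2)=0$, $a(u')^2=4|x-1|^{5/2}\in L^1$, so $u\in W^1_{a,0}(\Omega)$, yet $u(1)=1$. Your justification (``finiteness of $\int_c^1 u^2$ pins $u(1^-)$ to $0$'') does not work: a finite nonzero limit is perfectly compatible with $u\in L^2$. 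Worse, in the strong case $\mu_{1,a}\ge 1$ the one-sided limit $u(1^-)$ need not exist at all (cf.\ Theorem~\ref{Th 1.1}, which only gives $|x-1|u^2(x)\to 0$), and your Cauchy--Schwarz bound $u(x)^2\le\big(\int_x^1\frac{ds}{a(s)}\big)\int_x^1 a(u')^2$ is vacuous since $\int_x^1 1/a=\infty$.

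The paper's proof avoids this entirely by never integrating from the singular point. One writes $u(x)=\int_c^x u'(s)\,ds$ for \emph{all} $x\in[c,1)$ (using only $u(c)=0$), obtains $|u(x)|^2\le\|u'\|^2_{L^2(\Omega_1,a\,dx)}\int_c^x\frac{ds}{a(s)}$, integrates over $[c,1)$, and applies Fubini to get
\[
\|u\|^2_{L^2(\Omega_1)}\le\|u'\|^2_{L^2(\Omega_1,a\,dx)}\int_c^1\frac{1-s}{a(s)}\,ds.
\]
The crucial gain is the factor $(1-s)$ produced by Fubini: even when $1/a\notin L^1$, the integral $\int_{x_1^\ast}^1\frac{1-s}{a(s)}\,ds$ converges because $a(s)\ge a(x_1^\ast)(1-s)^{\mu_{1,a}}(1-x_1^\ast)^{-\mu_{1,a}}$ and $\mu_{1,a}<2$ make the integrand $\lesssim(1-s)^{1-\mu_{1,a}}$. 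Splitting this single integral at $x_1^\ast$ then yields exactly the two summands of $D_{1,a}$ (in particular, $\int_c^{x_1^\ast}(1-s)\,ds=\tfrac12(x_1^\ast-c)(2-x_1^\ast-c)$ gives the first term directly, without the artificial inflation you perform). The argument on $(1,d]$ is symmetric, integrating from $d$.
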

\begin{proof}
	Let $u$ be a given element of $V^1_{a,0}(\Omega)$. Then, for any $x\in[c,1)$, we have
	\begin{align*}
	|u(x)|=\left|\int_c^x u^\prime(s)\,ds\right|&=\left|\int_c^x \sqrt{a(s)}u^\prime(s)\frac{1}{\sqrt{a(s)}}\,ds\right|
	\le
	\|u^\prime\|_{L^2(\Omega,a\,dx)}\left(\int_c^{x}\frac{ds}{a(s)}\right)^\frac{1}{2},
	\end{align*}
	where $L^2(\Omega,a\,dx)$ stands for the weighted Lebesgue space endowed with the norm\\ $\|f\|_{L^2(\Omega,a\,dx)}=\left(\int_{\Omega}f^2 a\,dx\right)^{1/2}$.
	
	From this and estimate \eqref{1.0.1d}, by Fubini's theorem, we obtain
	\begin{align}
	\notag
	\|&u\|^2_{L^2(\Omega_1)}\le \|u^\prime\|^2_{L^2(\Omega_1,a\,dx)} \int_c^{1} \int_c^{x}\frac{ds}{a(s)}\,dx=
	\|u^\prime\|^2_{L^2(\Omega_1,a\,dx)} \int_c^{1} \int_s^{1}\,dx\frac{ds}{a(s)}\\
	\notag
	&=\|u^\prime\|^2_{L^2(\Omega_1,a\,dx)} \left[\int_c^{x_1^\ast} \frac{1-s}{a(s)}\,ds+\int_{x_1^\ast}^{1} \frac{1-s}{a(s)}\,ds\right]
	\end{align}
	\begin{align}
	\notag
	&\stackrel{\text{by \eqref{1.0.1d}}}{\le}\|u^\prime\|^2_{L^2(\Omega_1,a\,dx)}\left[ 
	\frac{1}{\min\limits_{x\in[c,x_1^\ast]} a(x)}\int_c^{x_1^\ast} (1-s)\,ds+\frac{(1-x_1^\ast)^{\mu_{1,a}}}{a(x_1^\ast)}\int_{x_1^\ast}^{1}  (1-s)^{1-\mu_{1,a}}\,ds
	\right]\\
	&=\|u^\prime\|^2_{L^2(\Omega_1,a\,dx)}\left[\frac{\left(x_1^\ast-c\right)\left(2-x_1^\ast-c\right)}{2\min\limits_{x\in[c,x_1^\ast]} a(x) }+\frac{\left(1-x_1^\ast\right)^{2}}{a(x_1^\ast)\left(2-\mu_{1,a}\right)}\right].
	\label{2.5a}	
	\end{align}
	Arguing in a similar manner, for any $x\in (1,d]$, we have
	\[
	|u(x)|=\left|\int_x^d u^\prime(s)\,ds\right|\le \|u^\prime\|_{L^2(\Omega,a\,dx)}\left(\int_{x}^d\frac{ds}{a(s)}\right)^\frac{1}{2}.
	\]
	Then, estimate \eqref{1.0.1e} and Fubini's theorem lead us to the following chain of estimates
	\begin{align}
	\notag
	\|&u\|^2_{L^2(\Omega_2)}\le \|u^\prime\|^2_{L^2(\Omega_2,a\,dx)} \int_{1}^d \int_{x}^d\frac{ds}{a(s)}\,dx
	=
	\|u^\prime\|^2_{L^2(\Omega_2,a\,dx)} \int_{1}^d \int_{1}^s\,dx\frac{ds}{a(s)}\\
	\notag
	&=\|u^\prime\|^2_{L^2(\Omega_2,a\,dx)} \left[\int_{1}^{x_2^\ast} \frac{s-1}{a(s)}\,ds+\int_{x_2^\ast}^d \frac{s-1}{a(s)}\,ds\right]\\
	\notag
	&\stackrel{\text{by \eqref{1.0.1e}}}{\le}\|u^\prime\|^2_{L^2(\Omega_2,a\,dx)}\left[ 
	\frac{1}{\min\limits_{x\in[x_2^\ast,d]} a(x)}\int_{x_2^\ast}^d (s-1)\,ds+\frac{(x_2^\ast-1)^{\mu_{2,a}}}{a(x_2^\ast)}\int_1^{x_2^\ast}  (s-1)^{1-\mu_{2,a}}\,ds
	\right]\\
	\label{2.5b}
	&\le\|u^\prime\|^2_{L^2(\Omega_2,a\,dx)}\left[\frac{\left(d-x_2^\ast\right)\left(d+x_2^\ast-2\right)}{2\min\limits_{x\in[x_2^\ast,d]} a(x) }+\frac{\left(x_2^\ast-1\right)^{2}}{a(x_2^\ast)\left(2-\mu_{2,a}\right)}\right].
	\end{align}
	It remains to notice that due to Proposition~\ref{Prop 1.0a}, the obtained estimates \eqref{2.5a} and \eqref{2.5b} are consistent. 
\end{proof}

\begin{lemma}
	\label{Prop 1.0c}
	Let $a:\overline{\Omega}\rightarrow\mathbb{R}$ be a given function with properties (i)--(iii). Let $V^1_{a,0}(\Omega)$ be some intermediate space with $H^1_{a,0}(\Omega)\subseteq V^1_{a,0}(\Omega)\subseteq W^1_{a,0}(\Omega)$.  Then the estimate
	\begin{gather}
	\label{2.6a}
	\|u\|_{L^2(\Omega)}\le C_a\|u^\prime\|_{L^2(\Omega,a\,dx)}=C_a\left(\int_{\Omega} a(x)u_x^2(x)\,dx\right)^{1/2}
	\end{gather}
	holds true for all $u\in V^1_{a,0}(\Omega)$ with
	\begin{equation}
	\label{1.0.1dd}
	C_a^2=4\max\left\{\frac{ (1-c)^{\mu_{1,a}}}{\min\limits_{x\in[c,x_1^\ast]} a(x)}, \frac{(1-x_1^\ast)^{\mu_{1,a}}}{a(x_1^\ast)},\frac{ (d-1)^{\mu_{2,a}}}{\min\limits_{x\in[x_2^\ast,d]} a(x)}, \frac{(x_2^\ast-1)^{\mu_{2,a}}}{a(x_2^\ast)}\right\}
	\end{equation}
	In particular, if $x_1^\ast=c$ and $x_2^\ast=d$, then \eqref{2.6a} can be specified as follows
	\begin{gather}
	\label{2.6aa}
	\|u\|^2_{L^2(\Omega)}\le 4\max\left\{\frac{ (1-c)^{\mu_{1,a}}}{a(c)},\frac{ (d-1)^{\mu_{2,a}}}{a(d)}\right\}\|u^\prime\|^2_{L^2(\Omega,a\,dx)}
	\end{gather}
\end{lemma}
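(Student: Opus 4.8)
The plan is to prove the estimate separately on $\Omega_1=[c,1)$ and on $\Omega_2=(1,d]$ and then add the two halves. Concretely, I aim to show
\[
\int_{\Omega_1} u^2\,dx\le 4M_1\int_{\Omega_1}a u_x^2\,dx,\qquad \int_{\Omega_2} u^2\,dx\le 4M_2\int_{\Omega_2}a u_x^2\,dx,
\]
with $M_1=\max\{(1-c)^{\mu_{1,a}}/\min_{x\in[c,x_1^\ast]}a(x),\ (1-x_1^\ast)^{\mu_{1,a}}/a(x_1^\ast)\}$ and $M_2=\max\{(d-1)^{\mu_{2,a}}/\min_{x\in[x_2^\ast,d]}a(x),\ (x_2^\ast-1)^{\mu_{2,a}}/a(x_2^\ast)\}$; summing these and using $\int_\Omega a u_x^2\,dx=\int_{\Omega_1}a u_x^2\,dx+\int_{\Omega_2}a u_x^2\,dx$ together with $C_a^2=4\max\{M_1,M_2\}$ yields \eqref{2.6a}--\eqref{1.0.1dd}. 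The specialization \eqref{2.6aa} is then immediate: when $x_1^\ast=c$, estimate \eqref{1.0.1d} is valid on all of $[c,1)$, so the two entries defining $M_1$ coincide and equal $(1-c)^{\mu_{1,a}}/a(c)$ (and likewise for $M_2$). By symmetry I only describe the bound on $\Omega_1$, assuming $u\not\equiv0$ on $\Omega_1$.

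Step 1 (integration by parts against the linear weight $x-1$). For $u\in V^1_{a,0}(\Omega)$ and any small $\varepsilon>0$, the restriction of $u$ to $[c,1-\varepsilon]$ belongs to $W^{1,2}(c,1-\varepsilon)$, because $a$ is continuous and strictly positive, hence bounded below by a positive constant, on that interval; in particular $u$ is absolutely continuous there. Writing $1=\tfrac{d}{dx}(x-1)$ and integrating by parts gives
\[
\int_c^{1-\varepsilon} u^2\,dx=\big[u^2(x)(x-1)\big]_{x=c}^{x=1-\varepsilon}-2\int_c^{1-\varepsilon} u u_x (x-1)\,dx=-\varepsilon\,u(1-\varepsilon)^2+2\int_c^{1-\varepsilon} u u_x (1-x)\,dx,
\]
where the boundary term at $x=c$ vanishes since $u(c)=0$. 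Because $-\varepsilon\,u(1-\varepsilon)^2\le0$ we may simply discard it, and then let $\varepsilon\to0$ (monotone convergence on the left-hand side) to obtain $\int_c^{1}u^2\,dx\le 2\int_c^{1}|u|\,|u_x|\,(1-x)\,dx$.

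Step 2 (weighted Cauchy--Schwarz and a pointwise bound on the weight). By Cauchy--Schwarz in $L^2(\Omega_1,a\,dx)$,
\[
2\int_c^{1}|u|\,|u_x|\,(1-x)\,dx\le 2\Big(\int_c^{1}\tfrac{(1-x)^2}{a(x)}\,u^2\,dx\Big)^{1/2}\Big(\int_c^{1}a u_x^2\,dx\Big)^{1/2}.
\]
It remains to bound $(1-x)^2/a(x)$ by $M_1$ on all of $[c,1)$, which I do by splitting at $x_1^\ast$: on $[c,x_1^\ast]$ one uses $a(x)\ge\min_{[c,x_1^\ast]}a$ and, since $0<1-x\le1-c\le1$ and $0<\mu_{1,a}<2$ by Proposition~\ref{Prop 1.0a}, the elementary bound $(1-x)^2\le(1-c)^2\le(1-c)^{\mu_{1,a}}$; on $[x_1^\ast,1)$ one uses estimate \eqref{1.0.1d}, which gives $(1-x)^2/a(x)\le (1-x_1^\ast)^{\mu_{1,a}}a(x_1^\ast)^{-1}(1-x)^{2-\mu_{1,a}}\le(1-x_1^\ast)^{\mu_{1,a}}/a(x_1^\ast)$, again because $1-x\le1$ and $2-\mu_{1,a}>0$. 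Hence $\int_c^{1}\tfrac{(1-x)^2}{a}u^2\,dx\le M_1\int_c^{1}u^2\,dx$, so combining the displays of Steps 1--2 yields $\int_c^{1}u^2\,dx\le 2\sqrt{M_1}\,(\int_c^{1}u^2\,dx)^{1/2}(\int_c^{1}a u_x^2\,dx)^{1/2}$; dividing by the finite positive number $(\int_c^{1}u^2\,dx)^{1/2}$ and squaring gives $\int_{\Omega_1}u^2\,dx\le4M_1\int_{\Omega_1}a u_x^2\,dx$. The bound on $\Omega_2$ is obtained in exactly the same way by integrating by parts on $[1+\varepsilon,d]$ against $x-1$ — the boundary term at $x=d$ vanishing since $u(d)=0$, the one at $x=1+\varepsilon$ having the favorable sign — and using $0<d-1\le1$ together with estimate \eqref{1.0.1e}.

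The only point that requires a little care is the limiting procedure in Step 1, i.e.\ making sure no uncontrolled boundary contribution survives at the degeneration point $x=1$. The device that resolves this cleanly — and that frees us from having to establish any trace or regularity property of $u$ at $x=1$ — is to keep the negative term $-\varepsilon\,u(1-\varepsilon)^2$ produced by the integration by parts on $[c,1-\varepsilon]$ and discard it by its sign before passing to the limit. Everything else is routine: the Cauchy--Schwarz step is standard, and the pointwise weight bounds use only $\mu_{i,a}<2$ (Proposition~\ref{Prop 1.0a}), the lower bounds \eqref{1.0.1d}--\eqref{1.0.1e}, and the inequality $t^p\le t^q$ for $t\in(0,1]$ and $p\ge q$, valid here since $1-c,\,1-x,\,d-1,\,x-1\in(0,1]$ by the standing assumption $0\le c<1<d\le2$.
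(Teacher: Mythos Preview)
Your proof is correct and shares all the essential ingredients with the paper's argument: integration by parts against the linear weight $x-1$, discarding the boundary contribution at $x=1$ by its sign, and the pointwise bound $(1-x)^2\le M_1\,a(x)$ obtained by splitting at $x_1^\ast$ and invoking \eqref{1.0.1d}--\eqref{1.0.1e} together with $\mu_{i,a}<2$ from Proposition~\ref{Prop 1.0a}. The only difference is in how the Hardy-type step is closed: the paper expands the nonnegative square $\int_c^x\bigl[(1-s)u'(s)-\tfrac12 u(s)\bigr]^2\,ds\ge0$ (using the integration-by-parts identity \eqref{2.7}) to obtain the unweighted bound $\int_c^x u^2\,ds\le 4\int_c^x (1-s)^2(u')^2\,ds$ first and then inserts the pointwise weight estimate, whereas you apply Cauchy--Schwarz with the weight $a$ already split off and then bootstrap by dividing out $(\int u^2)^{1/2}$. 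These are two standard, equivalent derivations of the same Hardy inequality and yield the identical constant; your handling of the limit $\varepsilon\to0$ is in fact slightly cleaner, since the sign argument removes any need to discuss the trace of $u$ at the degeneration point.
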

\begin{proof}
	Let $u$ be a given element of $V^1_{a,0}(\Omega)$. 
	Taking into account the following transformation
	\begin{align}
	\int_c^x (1-s)u^\prime(s)u(s)\,ds&=\frac{1}{2} \int_c^x (1-s)\frac{d}{ds} u^2(s)\,dx
	\label{2.7}
	=\frac{1}{2}(1-x)u^2(x)+\frac{1}{2}\int_0^xu^2(s)\,dx,
	\end{align}
	which is valid for all $x\in[c,1)$, we observe that
	\begin{align*}
	0&\le \int_c^x \left[(1-s)u^\prime(s)-\frac{1}{2}u(s)\right]^2\,ds\\
	&=\int_c^x \left[(1-s)^2\left[u^\prime(s)\right]^2+\frac{1}{4}u^2(s)-(1-s)u^\prime(s)u(s)\right]\,ds\\
	&\stackrel{\text{by \eqref{2.7}}}{=}
	\int_c^x \left[(1-s)^2\left[u^\prime(s)\right]^2-\frac{1}{4}u^2(s)\right]\,ds-\frac{1}{2}(1-x)u^2(x),\quad\forall\,x\in [c,1).
	\end{align*}
	From this, we deduce that
	\begin{align*}
	\int_c^x u^2(s)\,ds&\le 4 \int_c^x (1-s)^2\left[u^\prime(s)\right]^2\,ds\\
	&=4\int_c^{x_1^\ast} (1-s)^2\left[u^\prime(s)\right]^2\,ds+
	4\int_{x_1^\ast}^x (1-s)^2\left[u^\prime(s)\right]^2\,ds\\
	&\stackrel{\text{by \eqref{1.0.4a}}}{\le} 4 (1-c)^{\mu_{1,a}}\int_c^{x_1^\ast} \left[u^\prime(s)\right]^2\,ds+4\int_{x_1^\ast}^x (1-s)^{\mu_{1,a}}\left[u^\prime(s)\right]^2\,ds\\
	&\stackrel{\text{by \eqref{1.0.1d}}}{\le} \frac{4 (1-c)^{\mu_{1,a}}}{\min\limits_{x\in[c,x_1^\ast]} a(x)}\int_c^{x_1^\ast} a(s)\left[u^\prime(s)\right]^2\,ds+\frac{4(1-x_1^\ast)^{\mu_{1,a}}}{a(x_1^\ast)}\int_{x_1^\ast}^x a(s)\left[u^\prime(s)\right]^2\,ds\\	
	&\le \max\left\{\frac{4 (1-c)^{\mu_{1,a}}}{\min\limits_{x\in[c,x_1^\ast]} a(x)}, \frac{4(1-x_1^\ast)^{\mu_{1,a}}}{a(x_1^\ast)}\right\} \int_c^x a(s) \left[u^\prime(s)\right]^2\,ds,\quad\forall\,x\in[x_1^\ast,1).
	\end{align*}
	Taking the limit as $x\nearrow 1$ in the last relation, we arrive at the estimate
	\begin{equation}
	\label{2.8a}
	\|u\|^2_{L^2(\Omega_1)}\le \max\left\{\frac{4 (1-c)^{\mu_{1,a}}}{\min\limits_{x\in[c,x_1^\ast]} a(x)}, \frac{4(1-x_1^\ast)^{\mu_{1,a}}}{a(x_1^\ast)}\right\}\|u^\prime\|^2_{L^2(\Omega_1,a\,dx)}.
	\end{equation}
	
	By analogy with the previous case, we make use of the following transformation which is valid for each $x\in [1,d]$.
	\begin{equation}
	\int_x^d(s-1) u(s)u^\prime(s)\,ds=\frac{1}{2}\int_x^d (s-1) \frac{d}{ds}u^2(s)\,ds
	\label{2.9}
	=\frac{1}{2}\left[-(x-1)u^2(x)-\int_x^d u^2(s)\,ds\right].
	\end{equation}
	Then
	\begin{align*}
	0&\le \int_x^d \left[(s-1)u^\prime(s)+\frac{1}{2}u(s)\right]^2\,ds\\
	&=\int_x^d \left[(s-1)^2\left[u^\prime(s)\right]^2+\frac{1}{4}u^2(s)+(s-1)u(s)u^\prime(s)\right]\,ds\\
	&\stackrel{\text{by \eqref{2.9}}}{=}
	\int_x^d \left[(s-1)^2\left[u^\prime(s)\right]^2-\frac{1}{4}u^2(s)\right]\,ds- \frac{1}{2}(x-1)u^2(x).
	\end{align*}
	Since $(x-1)u(x)\ge 0$ for all $x\in [1,d]$, it follows that
	\begin{align*}
	\int_x^d &u^2(s)\,ds\le 4 \int_x^{x_2^\ast} (s-1)^2\left[u^\prime(s)\right]^2\,ds+4 \int_{x_2^\ast}^d (s-1)^2\left[u^\prime(s)\right]^2\,ds\\
	&\stackrel{\text{by \eqref{1.0.1e}, \eqref{1.0.4a}}}{\le} \left[ \frac{4(x_2^\ast-1)^{\mu_{2,a}}}{a(x_2^\ast)}\int_x^{x_2^\ast} a(s)\left[u^\prime(s)\right]^2\,ds+\frac{4 (d-1)^{\mu_{2,a}}}{\min\limits_{x\in[x_2^\ast,d]} a(x)}\int_{x_2^\ast}^d a(s)\left[u^\prime(s)\right]^2\,ds\right]\\
	&\le \max\left\{\frac{4 (d-1)^{\mu_{2,a}}}{\min\limits_{x\in[x_2^\ast,d]} a(x)}, \frac{4(x_2^\ast-1)^{\mu_{2,a}}}{a(x_2^\ast)}\right\}\int_x^d a(s) \left[u^\prime(s)\right]^2\,ds,\quad\forall\,x\in(1,x_2^\ast].
	\end{align*}
	As a result, passing to the limit in the last relation as $x\searrow 1$, we arrive at the inequality
	\begin{equation}
	\label{2.8b}
	\|u\|^2_{L^2(\Omega_2)}\le \max\left\{\frac{4 (d-1)^{\mu_{2,a}}}{\min\limits_{x\in[x_2^\ast,d]} a(x)}, \frac{4(x_2^\ast-1)^{\mu_{2,a}}}{a(x_2^\ast)}\right\}\|u^\prime\|^2_{L^2(\Omega_2,a\,dx)}.
	\end{equation}
	Thus, the announced estimate \eqref{2.6a} is a direct consequence of \eqref{2.8a} and \eqref{2.8b}.
\end{proof}

As a direct consequence of Lemmas~\ref{Prop 1.0b} and \ref{Prop 1.0c}, we have the following result.
\begin{theorem}[Friedrichs's inequality]
	\label{Th 2.0.1} Let
	$a:\overline{\Omega}\rightarrow\mathbb{R}$ be a given function with properties (i)--(iii). Let $u$ be an arbitrary element of some intermediate space  $V^1_{a,0}(\Omega)$. Then
	\begin{multline}
	\label{1.0.1ddd}
	\|u\|_{L^2(\Omega)}\le\min\{D_a,C_a\}\left(\int_{\Omega} a(x)u_x^2(x)\,dx\right)^{1/2}\\
	=\min\left\{\max\left\{\sqrt{D_{1,a}},\sqrt{D_{2,a}}\right\},C_a\right\}\left(\int_{\Omega} a(x)u_x^2(x)\,dx\right)^{1/2},\quad\forall\,u\in V^1_{a,0}(\Omega),
	\end{multline}
	where the constants $D_{1,a}$, $D_{2,a}$ and $C_a$ are defined in \eqref{2.4a}, \eqref{2.4b}, and \eqref{1.0.1dd}, respectively.
\end{theorem}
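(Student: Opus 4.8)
The plan is to obtain Theorem~\ref{Th 2.0.1} directly from the two Friedrichs-type estimates already proved in Lemmas~\ref{Prop 1.0b} and~\ref{Prop 1.0c}, with essentially no additional work. First I would fix an arbitrary $u\in V^1_{a,0}(\Omega)$. Lemma~\ref{Prop 1.0b} gives
\[
\|u\|_{L^2(\Omega)}\le D_a\left(\int_\Omega a(x)u_x^2(x)\,dx\right)^{1/2},\qquad D_a^2=\max\{D_{1,a},D_{2,a}\},
\]
and Lemma~\ref{Prop 1.0c} gives the same inequality with $D_a$ replaced by $C_a$, the constant defined in \eqref{1.0.1dd}. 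Both bounds hold for the same element $u$, hence they hold simultaneously, and consequently the estimate remains valid with the smaller of the two constants in front, i.e. $\|u\|_{L^2(\Omega)}\le\min\{D_a,C_a\}\left(\int_\Omega a(x)u_x^2(x)\,dx\right)^{1/2}$.

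Next I would rewrite $D_a$ in the form appearing on the right-hand side of \eqref{1.0.1ddd}. The quantities $D_{1,a}$ and $D_{2,a}$ in \eqref{2.4a}--\eqref{2.4b} are sums of two nonnegative terms: the first summands are nonnegative because $x_1^\ast+c<2$ and $d+x_2^\ast>2$ (recall $0\le c<1<d\le 2$ and $x_1^\ast\in[c,1)$, $x_2^\ast\in(1,d]$) and because $a$ is strictly positive away from the degeneration point by property~(i), so the minima over $[c,x_1^\ast]$ and $[x_2^\ast,d]$ and the values $a(x_1^\ast),a(x_2^\ast)$ are positive; the second summands are finite and positive precisely because Proposition~\ref{Prop 1.0a} ensures $2-\mu_{1,a}>0$ and $2-\mu_{2,a}>0$. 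Therefore $D_{1,a},D_{2,a}\ge 0$, the square root commutes with the maximum, $\sqrt{\max\{D_{1,a},D_{2,a}\}}=\max\{\sqrt{D_{1,a}},\sqrt{D_{2,a}}\}$, and substituting this identity into $\min\{D_a,C_a\}$ yields exactly the displayed equality in \eqref{1.0.1ddd}. Since $u\in V^1_{a,0}(\Omega)$ was arbitrary, this finishes the argument.

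I do not expect any genuine difficulty here: the whole content of the theorem is carried by Lemmas~\ref{Prop 1.0b} and~\ref{Prop 1.0c}, and the only points that require a word are the well-posedness (finiteness and positivity) of the constants --- which is where Proposition~\ref{Prop 1.0a} and property~(i) enter --- together with the elementary commutation of $\sqrt{\,\cdot\,}$ with $\max$ on the nonnegative reals. It is worth adding, as this was the stated motivation for the three lemmas, that \eqref{1.0.1ddd} shows $u\mapsto\left(\int_\Omega a(x)u_x^2(x)\,dx\right)^{1/2}$ to be a norm on $V^1_{a,0}(\Omega)$ equivalent to $\|\cdot\|_a$, so that the bilinear form \eqref{2.2} is indeed a scalar product turning $V^1_{a,0}(\Omega)$ into a Hilbert space.
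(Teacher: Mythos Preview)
Your proposal is correct and matches the paper's approach exactly: the paper simply states that Theorem~\ref{Th 2.0.1} is a direct consequence of Lemmas~\ref{Prop 1.0b} and~\ref{Prop 1.0c}, which is precisely what you spell out. The additional remarks on finiteness and positivity of the constants and on the commutation of $\sqrt{\,\cdot\,}$ with $\max$ are fine elaborations of that one-line claim.
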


Now we can give the following conclusion.
\begin{corollary}
	Under the assumptions of Theorem~\ref{Th 2.0.1}, $V^1_{a,0}(\Omega)$ is a Hilbert space with respect to the scalar product
	\begin{equation}
	\label{1.0.0}
	\left<u,v\right>_{V^1_{a,0}(\Omega)}=\int_\Omega a(x) u^\prime(x) v^\prime(x)\,dx,\quad\forall\, u,v\in V^1_{a,0}(\Omega).
	\end{equation}
\end{corollary}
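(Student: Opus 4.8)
The plan is to verify separately the two requirements hidden in the statement: that the bilinear form \eqref{1.0.0} is a genuine scalar product on $V^1_{a,0}(\Omega)$, and that $V^1_{a,0}(\Omega)$ is complete with respect to the induced norm $\|u\|_{\ast}:=\langle u,u\rangle_{V^1_{a,0}(\Omega)}^{1/2}=\big(\int_\Omega a(x)u_x^2\,dx\big)^{1/2}$. Bilinearity and symmetry of \eqref{1.0.0} are obvious, and $\langle u,u\rangle_{V^1_{a,0}(\Omega)}\ge 0$ is clear, so the only non-trivial algebraic point is that $\langle u,u\rangle_{V^1_{a,0}(\Omega)}=0$ forces $u\equiv 0$; but this is precisely the content of Friedrichs's inequality \eqref{1.0.1ddd}, since $\int_\Omega a u_x^2\,dx=0$ yields $\|u\|_{L^2(\Omega)}\le\min\{D_a,C_a\}\cdot 0=0$. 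Hence \eqref{1.0.0} is an inner product.

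For completeness I would first record, again from \eqref{1.0.1ddd}, that on $V^1_{a,0}(\Omega)$ the norm $\|\cdot\|_{\ast}$ is equivalent to the full norm $\|\cdot\|_a$: on the one hand $\|u\|_{\ast}^2=\int_\Omega a u_x^2\,dx\le\int_\Omega(u^2+a u_x^2)\,dx=\|u\|_a^2$, while on the other
\[
\|u\|_a^2=\|u\|_{L^2(\Omega)}^2+\|u\|_{\ast}^2\le\big(1+\min\{D_a,C_a\}^2\big)\|u\|_{\ast}^2 .
\]
It therefore suffices to show that $\big(V^1_{a,0}(\Omega),\|\cdot\|_a\big)$ is complete. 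Since $V^1_{a,0}(\Omega)$ is, by the very construction of these weighted spaces, a $\|\cdot\|_a$-closed intermediate space containing the $\|\cdot\|_a$-closure $H^1_{a,0}(\Omega)$ of $C^\infty_c(\Omega)$, completeness is inherited. Concretely, given a $\|\cdot\|_a$-Cauchy sequence $(u_n)\subset V^1_{a,0}(\Omega)$ one extracts $u_n\to u$ in $L^2(\Omega)$ and $\sqrt{a}\,u_{n,x}\to w$ in $L^2(\Omega)$; testing the identity $\int_\Omega u_n\varphi_x\,dx=-\int_\Omega u_{n,x}\varphi\,dx$ against $\varphi\in C^\infty_c(\Omega_0)$, on whose support $a$ is bounded away from $0$ so that $\varphi/\sqrt a\in L^2$, and passing to the limit identifies $u_x=w/\sqrt a\in L^2(\Omega,a\,dx)$ on $\Omega_0$; the uniform absolute continuity of $(u_n)$ on $\overline{\Omega}\setminus\{1\}$ together with the pointwise representations $u_n(x)=\int_c^x u_{n,x}$ on $\Omega_1$ and $u_n(x)=-\int_x^d u_{n,x}$ on $\Omega_2$ transfer the boundary conditions $u(c)=u(d)=0$ to the limit, so $u\in V^1_{a,0}(\Omega)$ and $u_n\to u$ in $\|\cdot\|_a$.

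The inner-product verification and the norm equivalence are routine once \eqref{1.0.1ddd} is in hand; the only genuinely delicate point is completeness. Here the main obstacle is twofold: one must know that $V^1_{a,0}(\Omega)$ is $\|\cdot\|_a$-closed — automatic for the canonical choice $H^1_{a,0}(\Omega)$, and to be understood as a standing requirement on any admissible intermediate space — and one must pass to the limit through the degeneration of $a$ at $x=1$, which is handled by restricting test functions to $\Omega_0$ where $a$ is uniformly positive and then controlling the endpoint values via the absolute continuity of elements of $V^1_{a,0}(\Omega)$ on $\overline{\Omega}\setminus\{1\}$.
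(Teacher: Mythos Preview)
Your argument is correct and matches the paper's intent: the corollary is stated there without proof, as an immediate consequence of the Friedrichs inequality \eqref{1.0.1ddd}, and your write-up simply spells out the two standard steps that this entails—positive definiteness of \eqref{1.0.0} and equivalence of $\|\cdot\|_\ast$ with $\|\cdot\|_a$. Your observation that completeness ultimately rests on $V^1_{a,0}(\Omega)$ being $\|\cdot\|_a$-closed is well taken; the paper does not state this explicitly for a general intermediate space, so it is indeed best read as a standing hypothesis (and is automatic for the canonical choice $H^1_{a,0}(\Omega)$).
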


In what follows, we will distinguish two possible cases for the weight function $a:\overline{\Omega}\rightarrow\mathbb{R}$. Namely, we say that we deal with
\begin{itemize}
	\item a weak degeneration in \eqref{0.1b} if $a:\overline{\Omega}\rightarrow\mathbb{R}$ satisfies properties (i)--(iii) and $1/a\in L^1(\Omega)$;
	\item a strong degeneration in \eqref{0.1b} if $a:\overline{\Omega}\rightarrow\mathbb{R}$ satisfies properties (i)--(iii) and $1/a\not\in L^1(\Omega)$.
\end{itemize}

Starting with the weak degenerate case, we note that due to the continuous embedding $W^{1,1}(\Omega)\hookrightarrow C(\overline{\Omega})$ and the estimates
 
\begin{gather*}
\int_\Omega |y|\,dx\le |\Omega|^{1/2} \left(\int_\Omega |y|^2\,dx\right)^{1/2}\le \sqrt{|\Omega|}\, \|y\|_{a},\\
\int_\Omega| y_x|\,dx\le\left(\int_\Omega |y_x|^2 a\,dx\right)^{1/2}\left(\int_\Omega a^{-1}\,dx\right)^{1/2}\le C\|y\|_{a},
\end{gather*}
we have the following result (we refer to \cite[Proposition~2.5]{Cannarsa} for the details).
\begin{theorem}
\label{Th 1.2.000} Let $a:\overline{\Omega}\rightarrow\mathbb{R}$ be a weight function satisfying properties (i)--(iii) and $1/a\in L^1(\Omega)$. Then
$H^1_a(\Omega)=W^1_a(\Omega)$,
 $W^{1,2}(\Omega)\hookrightarrow W^1_a(\Omega)$, $W^1_a(\Omega)\hookrightarrow W^{1,1}(\Omega)$, $W^1_a(\Omega)\hookrightarrow\hookrightarrow L^1(\Omega)$ compactly, and $W^1_a(\Omega)$ is continuously embedded into the class of absolutely continuous functions on $\overline{\Omega}$, so
\begin{gather}
\label{1.0a}
\lim_{x\nearrow 1} y(x)=\lim_{x\searrow 1} y(x),\quad
|y(1)|<+\infty,
\quad\forall\,y\in H^1_a(\Omega),\\
\label{1.0aa}
\lim_{x\nearrow 1} \sqrt{a(x)} y(x)=\lim_{x\searrow 1} \sqrt{a(x)} y(x)=0,\quad\forall\,y\in H^1_{a}(\Omega).
\end{gather}
In addition, if $y$ is an arbitrary element of the space
\begin{equation}
\label{1.0ab}
W^2_a(\Omega):=\left\{ y\in W^1_a(\Omega)\ :\ a y_x\in W^{1,2}(\Omega)\right\},
\end{equation}
then the following transmission condition
\begin{equation}
\label{1.0b}
\lim_{x\nearrow 1} a(x) y_x(x)=\lim_{x\searrow 1} a(x) y_x(x)=L,\quad\text{with }\ |L|<+\infty,
\end{equation}
holds true.
\end{theorem}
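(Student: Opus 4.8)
The plan is to prove Theorem~\ref{Th 1.2.000} by assembling it from the weighted-Sobolev machinery already developed, treating each assertion in turn and reserving the delicate transmission identities \eqref{1.0aa} and \eqref{1.0b} for the end. First I would address the density identity $H^1_a(\Omega)=W^1_a(\Omega)$: under $1/a\in L^1(\Omega)$ the estimates displayed just before the statement show that the $\|\cdot\|_a$-norm controls the $W^{1,1}(\Omega)$-norm, so elements of $W^1_a(\Omega)$ are in $W^{1,1}(\Omega)\hookrightarrow C(\overline\Omega)$; the point $1$ therefore carries no extra topological obstruction, and the usual Meyers--Serrin type truncation-and-mollification argument (mollifying away from $1$ on each of $\Omega_1,\Omega_2$ and patching with a partition of unity) shows $C^\infty(\overline\Omega)$ is dense, giving the identification. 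This is essentially \cite[Proposition~2.5]{Cannarsa} transported to the interior-degeneracy setting, and I would cite it while indicating the patching. The embeddings $W^{1,2}(\Omega)\hookrightarrow W^1_a(\Omega)$ and $W^1_a(\Omega)\hookrightarrow W^{1,1}(\Omega)$ follow directly from $a\in C(\overline\Omega)$ (hence bounded) and from the second displayed estimate respectively; compactness $W^1_a(\Omega)\hookrightarrow\hookrightarrow L^1(\Omega)$ follows from $W^1_a(\Omega)\hookrightarrow W^{1,1}(\Omega)\hookrightarrow\hookrightarrow L^1(\Omega)$ via Rellich--Kondrachov in one dimension.

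Next I would establish the continuity statements at the degeneracy point. Since every $y\in W^1_a(\Omega)=H^1_a(\Omega)$ is absolutely continuous on $\overline\Omega$, the one-sided limits at $1$ coincide and are finite, which is \eqref{1.0a}. For \eqref{1.0aa} I would argue on $\Omega_1$ (the case of $\Omega_2$ being symmetric): by \eqref{1.0.1d}, $a(x)\thicksim(1-x)^{\mu_{1,a}}$ from below near $1$ with $\mu_{1,a}<2$ by Proposition~\ref{Prop 1.0a}, so weak degeneration forces $\mu_{1,a}<1$; writing $\sqrt{a(x)}y(x)$ and using that $y$ is bounded and continuous up to $1$, the factor $\sqrt{a(x)}\to 0$ as $x\nearrow 1$ since $a(1)=0$ and $a$ is continuous. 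Thus the product tends to $0$; the equality of the two one-sided limits is automatic since both are $0$. Here the only subtlety is making sure $y(1)$ is genuinely finite, which we already have from absolute continuity, so no weighted blow-up can occur.

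The main obstacle, and the part I would treat most carefully, is the transmission condition \eqref{1.0b} for $y\in W^2_a(\Omega)$. The definition requires $ay_x\in W^{1,2}(\Omega)$, hence $ay_x$ is (after modification on a null set) absolutely continuous on all of $\overline\Omega$, in particular continuous across $1$; therefore $\lim_{x\nearrow1}a(x)y_x(x)$ and $\lim_{x\searrow1}a(x)y_x(x)$ both exist, are finite, and are equal to the common value $L:=(ay_x)(1)$. The real content is that this is consistent — i.e. that $W^2_a(\Omega)$ is nonempty and that the flux $ay_x$ does not secretly blow up — which is immediate once one notes $ay_x\in W^{1,2}(\Omega)\hookrightarrow C(\overline\Omega)$ in one dimension, so $|L|<+\infty$. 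I would spell out that $(ay_x)(x)=(ay_x)(x_0)+\int_{x_0}^x(ay_x)'(s)\,ds$ for any fixed reference $x_0$, with the integrand in $L^2\subset L^1$, so the limits from both sides reduce to evaluating the continuous representative at $1$. I would close by remarking that this is precisely where the interior degeneracy differs cosmetically, but not substantially, from the boundary-degeneracy case of \cite{Cannarsa}: the flux remains continuous through the damage point, and the transmission condition \eqref{1.0b} is just that continuity, in stark contrast to what will happen in the strong-degeneration regime treated later.
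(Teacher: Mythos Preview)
Your proof is correct and follows essentially the same route as the paper, which merely records the two Cauchy--Schwarz estimates displayed just before the theorem and refers to \cite[Proposition~2.5]{Cannarsa} for the remaining details; you have simply spelled those details out. The parenthetical remark that weak degeneration forces $\mu_{1,a}<1$ is not justified by the lower bound \eqref{1.0.1d} alone (that inequality only shows $\mu_{1,a}<1\Rightarrow 1/a\in L^1$, not the converse), but since your actual argument for \eqref{1.0aa} uses only $a(1)=0$, the continuity of $a$, and the boundedness of $y$ at $1$, this aside is harmless and can simply be deleted.
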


However, the situation changes drastically if we deal with strong degeneration in \eqref{0.1b}. Indeed, let us consider the following example. Let $c=0$, $d=2$, and
\[
y(x)=\left\{
\begin{array}{ll}
|x-1|^{-\frac{1}{4}}-1, & \text{ if }\ x\in (0,1),\\
|x-1|^{\frac{1}{2}}, & \text{ if }\ x\in [1,2).
\end{array}
\right.
\]
Setting $a(x)=|x-1|^{7/4}$, we see that properties (i)--(iii) hold true. Moreover, in this case we have  $1/a\not\in L^1(\Omega)$. Then, in spite of the fact that the function $y:\Omega\rightarrow \mathbb{R}$ has a discontinuity of the second kind at $x^0=1$, a direct calculations show that $$ 
\|y\|^2_a=\int_0^2 \left[ y^2(x)+a(x) y_x^2(x)\right]\,dx<+\infty
$$
whereas
\[
a(x)y_x(x)=\left\{
\begin{array}{ll}
-\frac{1}{4}|x-1|^{\frac{1}{2}}, & \text{ if }\ x\in (0,1),\\
+\frac{1}{2}|x-1|^{\frac{5}{4}}, & \text{ if }\ x\in [1,2).
\end{array}
\right.
\]
So, transmission conditions \eqref{1.0b} for the given function $y$ with finite $H^1_a$-norm can be specified as follows
\begin{equation}
\label{1.0c}
\lim_{x\nearrow 1} a(x) y_x(x)=\lim_{x\searrow 1} a(x) y_x(x)=0.
\end{equation}

In fact, we have the following result (see  \cite[Proposition~2.5]{Cannarsa} for comparison).
\begin{theorem}
	\label{Th 1.1}
	Let $a:\overline{\Omega}\rightarrow\mathbb{R}$ be a weight function satisfying properties (i)--(iii) and $1/a\not\in L^1(\Omega)$. Let $V^1_{a}(\Omega)$ be some intermediate space with $H^1_{a}(\Omega)\subseteq V^1_{a}(\Omega)\subseteq W^1_{a}(\Omega)$. Then the following assertions hold true:
	\begin{gather}
	\label{1.1.1a}
	\lim_{x\nearrow 1} |x-1| y^2(x)=\lim_{x\searrow 1} |x-1| y^2(x)=0,\quad\forall\,y\in V^1_{a}(\Omega),\\
	\label{1.1.1b}
	\exists\, x_i\in\Omega_i,\ i=1,2, \text{ such that }\  y(x)=o\left(|x-1|^{-\frac{1}{2}}\right)\ \text{for a.a. }\ x\in (x_1,x_2),\\
	\label{1.1.1d}
	\lim_{x\nearrow 1} a(x) y_x(x)=\lim_{x\searrow 1} a(x) y_x(x)=0,\quad\forall\,y\in V^2_{a}(\Omega),\\
	\label{1.1.1e}
	\lim_{x\nearrow 1} |x-1|a(x) y_x(x)^2=\lim_{x\searrow 1} |x-1|a(x) y_x(x)^2=0,\quad\forall\,y\in V^2_{a}(\Omega),\\
    \label{1.1.1ee}
	\lim_{x\nearrow 1} a(x)\varphi_x(x) y(x)=\lim_{x\searrow 1} a(x) \varphi_x(x) y(x)=0,\quad\forall\,y\in V^1_{a}(\Omega),\ \forall\,\varphi\in V^2_{a}(\Omega),
	\end{gather}
	where the small symbol $o$  stands for the Bachmann-Landau asymptotic notation, and 
	\[
	V^2_a(\Omega):=\left\{ y\in V^1_a(\Omega)\ :\ a y_x\in W^{1,2}(\Omega)\right\}.
	\]
\end{theorem}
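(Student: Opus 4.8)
The plan is to establish the five assertions in the order in which they are stated, each later one drawing on the earlier ones, relying on two facts already at our disposal: near the defect there is a two-sided lower bound $a(x)\ge C_0|x-1|^{2}$ with some $C_0>0$ (this is \eqref{2.25}, up to the harmless rescaling of $a$ used in the proof of Proposition~\ref{Prop 1.0a}, which affects none of the conclusions below); and, since $1/a\notin L^1(\Omega)$ whereas $1/a$ is integrable away from $x=1$, the function $1/a$ is non-integrable on at least one one-sided neighbourhood of $x=1$. Throughout, for $y\in V^1_a(\Omega)\subseteq W^1_a(\Omega)$ I will use, straight from the definition of $W^1_a(\Omega)$, that $y\in L^2(\Omega)$, $y_x\in L^1(\Omega)$, $\sqrt a\,y_x\in L^2(\Omega)$, together with the fact recalled in Section~\ref{Sec 1} that $y$ is absolutely continuous on $\overline\Omega\setminus\{1\}$; for $y\in V^2_a(\Omega)$ I will use that $ay_x\in W^{1,2}(\Omega)\hookrightarrow C(\overline\Omega)$.

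\emph{Assertions \eqref{1.1.1a} and \eqref{1.1.1b}.} Fix $y\in V^1_a(\Omega)$ and argue on $(x_1^\ast,1)$ (the interval $(1,x_2^\ast)$ is handled analogously). From $a(x)\ge C_0(1-x)^2$ near $1$ and $\sqrt a\,y_x\in L^2$ we get $(1-x)y_x\in L^2$ near $1$, hence $(1-x)\,y\,y_x\in L^1$ near $1$ by Cauchy--Schwarz and $y\in L^2$. As $y$ is absolutely continuous on each $[x_1^\ast,1-\e]$, the identity $\frac{d}{dx}\big[(1-x)y^2(x)\big]=-y^2(x)+2(1-x)y(x)y_x(x)$ holds there, and its right-hand side is integrable near $1$; therefore the limit $\ell:=\lim_{x\nearrow1}(1-x)y^2(x)$ exists and $\ell\ge0$. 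If $\ell>0$, then $y^2(x)\ge\ell/\big(2(1-x)\big)$ near $1$, contradicting $y\in L^2(\Omega)$; hence $\ell=0$, which (together with the analogous statement on $(1,x_2^\ast)$) is \eqref{1.1.1a}. Since $y$ is continuous on $\overline\Omega\setminus\{1\}$ this limit is genuine, so $|x-1|^{1/2}|y(x)|\to0$ as $x\to1$, i.e. $y(x)=o(|x-1|^{-1/2})$ near $1$, which is \eqref{1.1.1b}.

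\emph{Assertions \eqref{1.1.1d}, \eqref{1.1.1e} and \eqref{1.1.1ee}.} For $y\in V^2_a(\Omega)$, continuity of $ay_x$ at $x=1$ shows that both one-sided limits in \eqref{1.1.1d} equal $L:=(ay_x)(1)$. If $L\ne0$, then on the side of $1$ where $1/a\notin L^1$ continuity forces $ay_x$ to keep the sign of $L$ with $|a(x)y_x(x)|\ge|L|/2$ near $1$, so $|y_x(x)|\ge|L|/\big(2a(x)\big)$ there and $\int|y_x|=+\infty$ on that neighbourhood, contradicting $y_x\in L^1(\Omega)$; hence $L=0$, which is \eqref{1.1.1d}. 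Next, from $(ay_x)(1)=0$ and $ay_x\in W^{1,2}$ we have $a(x)y_x(x)=\int_1^x\frac{d}{ds}\!\left(a(s)y_x(s)\right)\,ds$, so Cauchy--Schwarz and absolute continuity of the Lebesgue integral give $\big(a(x)y_x(x)\big)^2=o(|x-1|)$; dividing by $a(x)$ and using $a(x)\ge C_0|x-1|^2$ yields $|x-1|\,a(x)y_x^2(x)=|x-1|\,\big(a(x)y_x(x)\big)^2/a(x)=o(1)\to0$, i.e. \eqref{1.1.1e}. Finally, applying the same estimate to $\varphi\in V^2_a(\Omega)$ gives $a(x)\varphi_x(x)=o(|x-1|^{1/2})$, and combined with \eqref{1.1.1b} this yields $a(x)\varphi_x(x)y(x)=o(|x-1|^{1/2})\cdot o(|x-1|^{-1/2})\to0$, which is \eqref{1.1.1ee}.

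\emph{Main obstacle.} The only genuinely non-routine point is \eqref{1.1.1d}: the existence of $\lim_{x\to1}a(x)y_x(x)$ is automatic from $ay_x\in C(\overline\Omega)$, but turning this into the statement that the limit is zero is precisely where strong degeneracy enters --- in the weakly degenerate case the transmission constant $L$ of \eqref{1.0b} need not vanish, cf.\ Theorem~\ref{Th 1.2.000}. Once \eqref{1.1.1a} and \eqref{1.1.1d} are in hand, the remaining assertions follow from the elementary estimates above.
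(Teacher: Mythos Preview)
Your proof is correct and follows the same overall architecture as the paper's --- prove the five assertions in order, reducing each to an integrability contradiction near $x=1$ --- but you streamline the last two steps in a way worth noting. For \eqref{1.1.1a}--\eqref{1.1.1b} your argument and the paper's coincide (show $\frac{d}{dx}[|x-1|y^2]\in L^1$, hence the limit exists, and a nonzero limit would contradict $y\in L^2$). For \eqref{1.1.1d} the paper argues that $L\neq0$ would force $a y_x^2\sim L^2/a\notin L^1$, contradicting $\sqrt{a}\,y_x\in L^2$; you instead use $|y_x|\ge |L|/(2a)$ together with $y_x\in L^1(\Omega)$, which is equally valid since $L^1$-membership of $y_x$ is built into the definition of $W^1_a(\Omega)$. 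For \eqref{1.1.1e} and \eqref{1.1.1ee} the paper proceeds by showing that the auxiliary functions $|x-1|a y_x^2$ and $a\varphi_x y$ have $L^1$ derivatives, so that the limits exist, and then rules out a nonzero value by contradiction (for \eqref{1.1.1ee} this uses the estimate $a|\varphi_x|\le \sqrt{|x-1|}\,\|(a\varphi_x)_x\|_{L^2}$, i.e.\ the cruder $O(|x-1|^{1/2})$ bound, followed by $y\in L^2$). Your route is more direct: from $(ay_x)(1)=0$ and absolute continuity of the $L^2$ integral you extract the sharper $a y_x=o(|x-1|^{1/2})$, and combining this with the quadratic lower bound $a\ge C_0|x-1|^2$ from \eqref{2.25} immediately gives both \eqref{1.1.1e} and \eqref{1.1.1ee} without any separate existence-of-limit argument. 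This buys you brevity and avoids the somewhat tedious verification that each term of the derivative lies in $L^1$; the paper's approach, on the other hand, never needs to invoke the lower bound on $a$ beyond what is implicit in the $\mu_{i,a}$ estimates.
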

\begin{proof}
	Let $y\in V^1_a(\Omega)$. Without loss of generality, we assume (for the simplicity) that $x_1^\ast=c$ and $x_2^\ast=d$. Let us show that the function
	\[
	v(x)=\left\{
	\begin{array}{ll}
	(1-x)y^2(x), & c\le x<1,\\
	0, & x=1,\\
	(x-1)y^2(x), & 1<x\le d
	\end{array}
	\right.
	\]
	is continuous on $\overline{\Omega}$. Indeed, $v$ is locally absolutely continuous in $\overline{\Omega}\setminus\{1\}$ and
	\[
	v_x=\mathrm{sign}\,(x-1)\,y^2(x)+2|x-1|y(x)y_x(x),\quad\text{a.e. in }\ \overline{\Omega}.
	\]
	Since $y\in L^2(\Omega)$ and
	\begin{align}
	\notag
      \int_{\Omega}|x-1|^2 &y^2_x(x)\,dx\stackrel{\text{by \eqref{1.0.4a}}}{\le} \int_{\Omega_1}|x-1|^{\mu_{1,a}} y^2_x(x)\,dx
      +\int_{\Omega_2}|x-1|^{\mu_{2,a}} y^2_x(x)\,dx\\
      \notag
      &\stackrel{\text{by \eqref{1.0.1d}--\eqref{1.0.1e}}}{\le} \frac{(1-c)^{\mu_{1,a}}}{a(c)}\int_{\Omega_1} a(x)y^2_x(x)\,dx+
       \frac{(d-1)^{\mu_{2,a}}}{a(d)}\int_{\Omega_2} a(x)y^2_x(x)\,dx\\
       \label{1.1.2}
       &\le \max\left\{\frac{(1-c)^{\mu_{1,a}}}{a(c)},\frac{(d-1)^{\mu_{2,a}}}{a(d)}\right\}\|y\|^2_a,
	\end{align}
	it follows that $v_x\in L^1(\Omega)$. Hence, $v$ is an absolutely continuous functions and, as a consequence, the limits $\lim_{x\nearrow 1} |x-1| y^2(x)=\lim_{x\searrow 1} |x-1| y^2(x)=L$ do exist and must vanish, for otherwise $y^2(x)\sim L/|x-1|$ (near the point $x_0=1$) would be not integrable. So, we come into conflict with the initial condition: $y\in L^2(\Omega)$. From this and the fact that $a(x)=O(|x-1|)$ in some neighborhood of $x=1$,  we immediately deduce properties  \eqref{1.1.1a}--\eqref{1.1.1b}.
	
	To prove the equality \eqref{1.1.1d}, it is enough to observe that the function $a(x)y_x(x)$ with $y\in V^2_{a}(\Omega)$ is absolutely continuous. Hence, the limits $\lim_{x\nearrow 1} a(x) y_x(x)=\lim_{x\searrow 1} a(x) y_x(x)=L$ do exist and must vanish, for otherwise $a(x)y_x(x)^2\sim L^2/a(x)$ (near the point $x_0=1$) would be not integrable. So, we come into conflict with the initial condition: $y\in V^1_a(\Omega)$.
	
To establish property \eqref{1.1.1e}, we set
	\[
	v(x)=\left\{
	\begin{array}{ll}
	(1-x)a(x)y^2_x(x), & c\le x<1,\\
	0, & x=1,\\
	(x-1)a(x)y^2_x(x), & 1<x\le d,
	\end{array}
	\right.
	\]
	where $y$ is an arbitrary element of $V^2_{a}(\Omega)$. Then $v(x)$ is continuous on $\Omega$. Indeed, $v$ is locally absolutely continuous in $\overline{\Omega}\setminus\{1\}$ and
	\begin{align*}
	v_x(x)&=\mathrm{sign}\,(x-1)\,a(x) y^2_x(x)+2|x-1| y_x(x)\left(a(x)y_x(x)\right)_x\\
	&\quad-|x-1|a_x(x) y^2_x(x)=I_1(x)+I_2(x)+I_3(x),\quad\text{a.e. in }\ \overline{\Omega}.
	\end{align*}
	Since $y\in V^1_a(\Omega)$, it follows that $I_1\in L^1(\Omega)$. The same conclusion is true for the second term $I_2$. Indeed, in view of estimate \eqref{1.1.2}, we have
	\begin{align*}
	\|I_2\|_{L^1(\Omega)}&\le 2 \left(\int_{\Omega}|x-1|^2 y_x(x)^2\,dx\right)^\frac{1}{2}\left(\int_{\Omega} \left(a(x)y_x(x)\right)_x^2\,dx\right)^\frac{1}{2}\\
	&\stackrel{\text{by \eqref{1.1.2}}}{\le}2\sqrt{\max\left\{\frac{(1-c)^{\mu_{1,a}}}{a(c)},\frac{(d-1)^{\mu_{2,a}}}{a(d)}\right\}}\|y\|_a\|(ay_x)_x\|_{L^2(\Omega)}<+\infty.
	\end{align*}
	As for the third term, we see that
	\begin{align*}
	\|I_3\|_{L^1(\Omega)}&\stackrel{\text{by \eqref{1.0.1d}--\eqref{1.0.1e}}}{\le} { \mu_{1,a}}\int_{\Omega_1} a(x)y^2_x(x)\,dx
	+{\mu_{2,a}}\int_{\Omega_2} a(x)y^2_x(x)\,dx\\
	\notag
	&\stackrel{\text{by \eqref{1.0.4a}}}{\le} 2\int_{\Omega} a(x)y^2_x(x)\,dx\le 2\|y\|^2_a<+\infty.
	\end{align*}
	So, $v(x)$ is absolutely continuous in $\overline{\Omega}$. As a consequence, we see that
	the limits $\lim_{x\nearrow 1} |x-1|a(x) y_x(x)^2=\lim_{x\searrow 1} |x-1|a(x) y_x(x)^2=L$ do exist and must vanish, for otherwise $a(x)y_x(x)^2\sim L/|x-1|$ (near the point $x_0=1$) would be not integrable. 
	
It remains to prove relation \eqref{1.1.1ee}. We do it by proving that the function 
\[
	v(x)=\left\{
	\begin{array}{ll}
	a(x)\varphi_x(x)y(x), & c\le x<1,\\
	0, & x=1,\\
	a(x)\varphi_x(x)y(x), & 1<x\le d
	\end{array}
	\right.
	\]
	is continuous on $\overline{\Omega}$. This follows by the arguments as above, because
\begin{align*}
\|v_x\|_{L^1(\Omega)}&\le \int_\Omega |\sqrt{a}y_x||\sqrt{a}\varphi_x|\,dx+\int_\Omega |y||(a\varphi_x)_x|\,dx\\
&\le \|y\|_a\|\varphi\|_a+ \|y\|_{L^2(\Omega)}\|(a\varphi_x)_x\|_{L^2(\Omega)}<+\infty,
\end{align*}
and, therefore, $v$ is absolutely continuous in $\overline{\Omega}$. Thus, we see that
	the limits 
	\begin{equation}
	\label{A.1}
	\lim_{x\nearrow 1} a(x) \varphi_x(x) y(x)=\lim_{x\searrow 1} a(x) \varphi_x(x) y(x)=L
	\end{equation}
	do exist. To conclude the proof, we show that $L=0$. Indeed, in view of the property \eqref{1.1.1d}, we have
\begin{equation}
\label{A.2}
a(x)|\varphi_x(x)|=\left|\int_1^x \left(a\varphi_x\right)_x\,dx\right|\le \sqrt{|x-1|}\|(a\varphi_x)_x\|_{L^2(\Omega)},\ \forall\,x\in\Omega_0,\ \forall\,\varphi\in V^2_a(\Omega).
\end{equation}
Hence, if we assume that $L\ne 0$, then, in a small neighborhood $\mathcal{U}(1)$ of $x=1$, for any functions $y\in V^1_a(\Omega)$ and $\varphi\in V^2_a(\Omega)$, we have 
\begin{align*}
\frac{L}{2}\stackrel{\text{by \eqref{A.1}}}{\le} a(x)|\varphi_x(x)| |y(x)|&\stackrel{\text{by \eqref{A.2}}}{\le} \sqrt{|x-1|}|y(x)| \|(a\varphi_x)_x\|_{L^2(\Omega)}\\
& =\frac{\mathrm{Const}}{2}\,\sqrt{|x-1|}|y(x)|,\ \forall\,x\in \mathcal{U}(1).
\end{align*}
From this we deduce that
\[
\frac{L}{\sqrt{|x-1|}}\le \mathrm{Const}\, |y(x)|,\ \forall\,x\in \mathcal{U}(1).
\]
However, since $y$ is an $L^2(\Omega)$-function, this relations becomes inconsistent. Thus, $L=0$. 
\end{proof}

The main technical difficulty related to the problem \eqref{0.1b}--\eqref{0.1d} comes from the degeneration effect at the point $x_0=1$. Therefore, taking now into account Theorems~\ref{Th 1.2.000} and \ref{Th 1.1}, we specify the original initial-boundary value problem \eqref{0.1b}--\eqref{0.1d} in the form of the following transmission problem:
\begin{equation}
\label{1.1.3}
y_{tt}-\left(a(x) y_x\right)_x =0\quad\text{in }\ (0,T)\times (c,1)\ \text{and }\ (0,T)\times (1,d),
\end{equation}
with the initial conditions
\begin{equation}
\label{1.1.4}
y(0,\cdot)=y_0,\quad y_t(0,\cdot)=y_1\quad\text{ in }\ \Omega,
\end{equation}
the boundary conditions

\begin{equation}
\label{1.1.5}
y(t,c)=f_c(t),\quad
y(t,d)=f_d(t)\quad\text{on }\ (0,T),
\end{equation}

and  the transmission conditions:
\begin{enumerate}
\item[(I)] For the case $1/a\in L^1(\Omega)$
\begin{gather}
\label{1.1.6a}
\lim_{x\nearrow 1} y(t,x)=\lim_{x\searrow 1} y(t,x),\quad 0<t<T,\\
\label{1.1.6b}
\lim_{x\nearrow 1} a(x) y_x(t,x)=\lim_{x\searrow 1} a(x) y_x(t,x),\quad 0<t<T;
\end{gather}

\item[(II)] For the case $1/a\not\in L^1(\Omega)$
\begin{gather}
\label{1.1.7a}
\lim_{x\nearrow 1} a(x)\varphi_x(x) y(t,x)=0=\lim_{x\searrow 1} a(x) \varphi_x(x) y(t,x),\ \forall\,\varphi\in V^1_{a}(\Omega),\  0<t<T,\\
\label{1.1.7b}
\lim_{x\nearrow 1} a(x) y_x(t,x)=0=\lim_{x\searrow 1} a(x) y_x(t,x),\quad 0<t<T.
\end{gather}
\end{enumerate}

Since transmission conditions \eqref{1.1.6b}--\eqref{1.1.7b} were substantiated in Theorems~\ref{Th 1.2.000} and \ref{Th 1.1} if only $y(t,\cdot)\in V^2_a(\Omega)$ and $\varphi\in V^2_a(\Omega)$ (which mainly corresponds to the case of classical solutions), it is reasonable to consider the transmission problems \eqref{1.1.3}--\eqref{1.1.7b} as a relaxed version of the original problem \eqref{0.1b}--\eqref{0.1d}.

\begin{remark}
	\label{Rem 1.1.1}
	It is clear that the proposed relaxation is only a matter of regularity of some solutions. We refer to the recent papers \cite{BKL_2020,BK_2020}, where the authors consider a particular case of the problem \eqref{0.1b}--\eqref{0.1d} with $a(x)=\mathrm{const}\,|x-1|^\alpha$ for $\alpha\in[1,2)$, and they show that this problems is ill-posed and admits many solutions, but only one of them satisfies  transmission conditions \eqref{1.1.6a}--\eqref{1.1.6b} and has a continuously differentiable flux at $x=1$. As for the rest ones, they satisfy transmission conditions in the form \eqref{1.1.7a}--\eqref{1.1.7b}. So, the passage to the relaxed version does not change the original problem \eqref{0.1b}--\eqref{0.1d}.  
\end{remark}

\section{On well-posedness of the degenerate transmission problems}
\label{Sec 2}

In this section we recall the main results of semi-group theory concerning weak and classical notions of solutions for differential operator equation. 
Let $a:\overline{\Omega}\rightarrow\mathbb{R}$ be a given function with properties (i)--(iii). Let $V^1_{a,0}(\Omega)$ be some intermediate space with $H^1_{a,0}(\Omega)\subseteq V^1_{a,0}(\Omega)\subseteq W^1_{a,0}(\Omega)$. Such space can be constructed as the $\|\cdot\|_a$-closure of a linear span of $H^1_{a,0}(\Omega)$ with any element $y^\ast\in W^1_{a,0}(\Omega)\setminus H^1_{a,0}(\Omega)$. 
We associate with it  the Hilbert space $\mathcal{H}_{a}:=V^1_{a,0}(\Omega)\times L^2(\Omega)$ and endow it with the scalar product (see \cite{Cannarsa} for comparison)
\[
\left<\left[u\atop v\right],\left[\widetilde{u}\atop \widetilde{v}\right]\right>_{\mathcal{H}_a}=
\int_{\Omega} v(x)\widetilde{v}(x)\,dx+\int_{\Omega} a(x)u_x(x)\widetilde{u}_x(x)\,dx.
\]

We define the unbounded operator $\mathcal{A}:D(\mathcal{A})\subset \mathcal{H}_a\rightarrow \mathcal{H}_a$, associated with the problem \eqref{1.1.3}--\eqref{1.1.7b} provided $f_c(t), f_d(t)\equiv 0$, as follows
\begin{gather}
\label{2.1}
\mathcal{A}\left[u\atop v\right]=\left[v\atop \left(au_x\right)_x\right].
\end{gather}
and either
\begin{equation}
\label{2.1a}
D(\mathcal{A})=\left\{\left[u\atop v\right]\in W^2_a(\Omega)\times V^1_{a,0}(\Omega)\ :\ 
\begin{array}{c}
\ds\lim_{x\nearrow 1} u(x)=\lim_{x\searrow 1} u(x),\\
\ds\lim_{x\nearrow 1} a(x) u_x(x)=\lim_{x\searrow 1} a(x) u_x(x),\\
u(d)=0
\end{array}
\right\}
\end{equation}
if $1/a\in L^1(\Omega)$, or
\begin{equation}
\label{2.1b}
D(\mathcal{A})=\left\{\left[u\atop v\right]\in W^2_a(\Omega)\times V^1_{a,0}(\Omega)\ :\
\begin{array}{c}
\ds\lim_{x\nearrow 1} a\varphi_x u=0=\lim_{x\searrow 1} a \varphi_x u,\ \forall\,\varphi\in H^2_{a}(\Omega),\\
\ds\lim_{x\nearrow 1} a(x) u_x(x)=0=\lim_{x\searrow 1} a(x) u_x(x),\\
u(d)=0
\end{array}
\right\}
\end{equation}
provided $1/a\not\in L^1(\Omega)$.

Arguing as in \cite[Section II.2]{Gaevski}, it can be shown that $D(\mathcal{A})$ is a dense subset of $\mathcal{H}_a$.
\begin{lemma}
\label{Lemma 2.1.2}
$\mathcal{A}:D(\mathcal{A})\subset \mathcal{H}_a\rightarrow \mathcal{H}_a$ is the generator of a contraction semi-group in $\mathcal{H}_a$.
\end{lemma}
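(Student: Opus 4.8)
The natural approach is to invoke the Lumer--Phillips theorem. Since $D(\mathcal{A})$ has already been observed to be dense in $\mathcal{H}_a$, it will suffice to establish that $\mathcal{A}$ is dissipative on $\mathcal{H}_a$ and that the range condition $\mathrm{Range}(\lambda I-\mathcal{A})=\mathcal{H}_a$ holds for some $\lambda>0$.

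First I would check dissipativity. For $z=[u,v]^{T}\in D(\mathcal{A})$ we have $\mathcal{A}z=[v,(a u_x)_x]^{T}$, hence
\[
\left\langle \mathcal{A}z,z\right\rangle_{\mathcal{H}_a}=\int_{\Omega}(a u_x)_x\,v\,dx+\int_{\Omega}a\,v_x u_x\,dx .
\]
The plan is to split the first integral over $(c,1)$ and $(1,d)$ and integrate by parts on each subinterval — which is legitimate since $(au_x)_x\in L^2(\Omega)$, $au_x v_x=(\sqrt a\,u_x)(\sqrt a\,v_x)\in L^1(\Omega)$, and $v(c)=v(d)=0$ — obtaining
\[
\int_{\Omega}(au_x)_x\,v\,dx=\lim_{x\nearrow 1}a u_x v-\lim_{x\searrow 1}a u_x v-\int_{\Omega}a u_x v_x\,dx .
\]
The boundary contribution at the interior degeneracy point $x=1$ vanishes in both regimes: in the weak case it equals $L\big(v(1^-)-v(1^+)\big)=0$ by the transmission conditions \eqref{1.1.6a}--\eqref{1.1.6b} encoded in \eqref{2.1a} together with the continuity of $u$ across $x=1$; in the strong case each one-sided limit is zero, either from $\lim_{x\to1}a u_x=0$ in \eqref{2.1b} combined with the bound $|v(x)|=o(|x-1|^{-1/2})$ of Theorem~\ref{Th 1.1}, or, more succinctly, from \eqref{1.1.1ee} applied with $\varphi=u\in V^2_a(\Omega)$ and $y=v\in V^1_a(\Omega)$. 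Thus $\left\langle \mathcal{A}z,z\right\rangle_{\mathcal{H}_a}=0$ for all $z\in D(\mathcal{A})$; in fact both $\mathcal{A}$ and $-\mathcal{A}$ will be dissipative, which is consistent with the expectation that $\mathcal{A}$ generates a unitary group.

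Next I would verify the range condition. Fixing $\lambda>0$ and $[f,g]^{T}\in\mathcal{H}_a$, the equation $(\lambda I-\mathcal{A})[u,v]^{T}=[f,g]^{T}$ reduces to $v=\lambda u-f$ together with the elliptic problem $\lambda^2 u-(a u_x)_x=g+\lambda f=:h\in L^2(\Omega)$ for $u\in V^1_{a,0}(\Omega)$. I would solve the latter by the Lax--Milgram lemma on $V^1_{a,0}(\Omega)$: the form $B(u,\phi)=\lambda^2\int_\Omega u\phi\,dx+\int_\Omega a u_x\phi_x\,dx$ is bounded, and it is coercive because $B(u,u)\ge\int_\Omega a u_x^2\,dx$, which by the Friedrichs inequality of Theorem~\ref{Th 2.0.1} also controls $\|u\|_{L^2(\Omega)}^2$ and hence is an equivalent norm on $V^1_{a,0}(\Omega)$; the functional $\phi\mapsto\int_\Omega h\phi\,dx$ is continuous on $V^1_{a,0}(\Omega)$ since $h\in L^2(\Omega)$ and $V^1_{a,0}(\Omega)\hookrightarrow L^2(\Omega)$. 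This produces a unique $u\in V^1_{a,0}(\Omega)$ with $B(u,\phi)=\int_\Omega h\phi\,dx$ for all $\phi\in V^1_{a,0}(\Omega)$. Testing with $\phi\in C^\infty_c(\Omega_0)$ shows $(au_x)_x=\lambda^2 u-h\in L^2$ on each of $(c,1)$ and $(1,d)$, so $au_x\in W^{1,2}$ on each subinterval and the one-sided limits of $au_x$ at $x=1$ exist. In the weak case, testing additionally with $\phi\in H^1_{a,0}(\Omega)$ having $\phi(1)\neq0$ and integrating by parts on $(c,1)$ and $(1,d)$ forces $\lim_{x\nearrow1}a u_x=\lim_{x\searrow1}a u_x$, i.e. \eqref{1.1.6b}, while \eqref{1.1.6a} is automatic from $u\in W^1_a(\Omega)=H^1_a(\Omega)$ (Theorem~\ref{Th 1.2.000}). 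In the strong case, a nonzero value $\lim_{x\nearrow1}a u_x=\ell$ would give $a u_x^2\sim\ell^2/a(x)$ near $x=1$, which is not integrable because $1/a\notin L^1(\Omega)$, contradicting $u\in W^1_{a,0}(\Omega)$; hence both one-sided limits vanish and the remaining requirement in \eqref{2.1b} holds automatically by \eqref{1.1.1ee}. In either case $au_x\in W^{1,2}(\Omega)$, so $u\in W^2_a(\Omega)$, $v=\lambda u-f\in V^1_{a,0}(\Omega)$, and $[u,v]^{T}\in D(\mathcal{A})$; Lumer--Phillips then yields the claim.

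The step I expect to be the main obstacle is the identification of the domain in Step~3: showing that the purely variational solution delivered by Lax--Milgram actually lies in $W^2_a(\Omega)$ and satisfies the prescribed transmission conditions at the interior degeneracy point. In the weak regime this is clean, because flux continuity is read off directly from the variational identity. In the strong regime one cannot impose the flux condition through the test functions, and one must instead exploit the non-integrability $1/a\notin L^1(\Omega)$ to rule out a nonzero flux limit at $x=1$ — precisely the mechanism already used to prove \eqref{1.1.1d}. Some care is also needed to justify the integrations by parts near $x=1$ (integrability of the relevant products and finiteness of the one-sided limits), which again rests on the fine properties of $V^1_a(\Omega)$ and $V^2_a(\Omega)$ collected in Theorems~\ref{Th 1.2.000} and \ref{Th 1.1}.
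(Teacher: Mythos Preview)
Your proposal is correct but follows a genuinely different route from the paper. Both arguments invoke Lumer--Phillips and both verify dissipativity of $\mathcal{A}$ in the same way, via integration by parts on $(c,1)$ and $(1,d)$ together with the transmission conditions, obtaining $\langle\mathcal{A}z,z\rangle_{\mathcal{H}_a}=0$. The divergence is in the second step: the paper does \emph{not} check the range condition at all. Instead it computes the adjoint by a second integration by parts, finds $\mathcal{A}^{\ast}=-\mathcal{A}$ on $D(\mathcal{A})$, and concludes that $\mathcal{A}^{\ast}$ is dissipative by the very same calculation; it then appeals to the criterion that a densely defined operator with both $\mathcal{A}$ and $\mathcal{A}^{\ast}$ dissipative generates a contraction semigroup. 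Your approach, by contrast, solves the resolvent equation $(\lambda I-\mathcal{A})[u,v]^{T}=[f,g]^{T}$ via Lax--Milgram and then verifies explicitly that the variational solution lies in $D(\mathcal{A})$, distinguishing the weak and strong cases to recover the transmission conditions. The paper's route is considerably shorter and avoids the elliptic regularity step entirely, but it is also somewhat more formal (it does not explicitly address the domain of $\mathcal{A}^{\ast}$ or closedness). Your route is longer but more constructive: it actually produces the resolvent, and the verification that the Lax--Milgram solution satisfies the correct flux conditions at $x=1$---especially your argument in the strong case that a nonzero flux limit would contradict $\sqrt{a}\,u_x\in L^2$ via $1/a\notin L^1$---is exactly the mechanism behind \eqref{1.1.1d}, so it fits cleanly into the paper's framework.
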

\begin{proof}
It is well-known that if $H$ is a Hilbert space and $B:D(B)\subset H\rightarrow H$ is a densely defined linear operator such that both $B$ and $B^\ast$ are dissipative, i.e.,
\[
\left<Bu,u\right>_{H}\le 0\quad\text{and}\quad \left<u,B^\ast u\right>_{H}\le 0\quad\forall\,u\in D(B),
\]
then $B$ generates a strongly continuous semi-group of contraction operators \cite[p.~686]{Lumer}. Let us show that the operator $\mathcal{A} : D(\mathcal{A})\subset\mathcal{H}_a \rightarrow \mathcal{H}_a$ satisfies the above mentioned properties.

Indeed, since the inclusion $\mathcal{A}\left[u\atop v\right]\in \mathcal{H}_a$ is obvious for all $\left[u\atop v\right]\in D(\mathcal{A})$, it remains to check whether the properties
\begin{equation}
\label{2.1.1}
\left<\mathcal{A}\left[u\atop v\right],\left[u\atop v\right]\right>_{\mathcal{H}_a}\le 0,\quad\text{and}\quad
\left<\left[u\atop v\right],\left(\mathcal{A}\right)^\ast\left[u\atop v\right]\right>_{\mathcal{H}_a}\le 0
\quad\forall\, \left[u\atop v\right]\in D(\mathcal{A})
\end{equation}
hold true. 
We do it for the case (II), $1/a\not\in L^1(\Omega)$, because the case (I) can be considered in a similar manner. The first inequality in \eqref{2.1.1} immediately follows from the definition of the set $D(\mathcal{A})$ and the following relations
\begin{multline}
\label{2.1.2}
\left<\mathcal{A}\left[u\atop v\right],\left[u\atop v\right]\right>_{\mathcal{H}_a}=
\left<\left[ v\atop \left(au_x\right)_x\right],\left[u\atop v\right]\right>_{\mathcal{H}_a}=\sum_{i=1}^2\int_{\Omega_i} \left(au_x\right)_x v\,dx+\sum_{i=1}^2\int_{\Omega_i} av_x u_x\,dx\\
=\lim_{x\nearrow 1}\left[\int_c^x \left(au_s\right)_s v\,ds+\int_c^x av_s u_s\,ds\right]+\lim_{x\searrow 1}\left[\int_x^d \left(au_s\right)_s v\,ds+\int_x^d av_s u_s\,ds\right] \\
= \left[\lim_{x\nearrow 1}a(x)u_x(x)v(x)\right]-
\left[\lim_{x\searrow 1}a(x)u_x(x)v(x)\right]=0,\quad\text{for all }\ \left[u\atop v\right]\in D(\mathcal{A})
\end{multline}
by the transmission conditions. 

Taking into account the equality
\[
\left<\mathcal{A}\left[u\atop v\right],\left[\widetilde{u}\atop \widetilde{v}\right]\right>_{\mathcal{H}_a}= \left<\left[u\atop v\right],\mathcal{A}^\ast\left[\widetilde{u}\atop \widetilde{v}\right]\right>_{\mathcal{H}_a},\quad \left[u\atop v\right], \left[\widetilde{u}\atop \widetilde{v}\right] \in D(\mathcal{A}),
\]
we see that
\begin{multline*}
\left<\mathcal{A}\left[u\atop v\right],\left[\widetilde{u}\atop \widetilde{v}\right]\right>_{\mathcal{H}_a}=
\left<\left[ v\atop \left(au_x\right)_x\right],\left[\widetilde{u}\atop \widetilde{v}\right]\right>_{\mathcal{H}_a}=\sum_{i=1}^2\int_{\Omega_i} \left(au_x\right)_x \widetilde{v}\,dx+\sum_{i=1}^2\int_{\Omega_i} av_x \widetilde{u}_x\,dx\\
=\lim_{x\nearrow 1}\left[\int_c^x \left(au_s\right)_s \widetilde{v}\,ds+\int_c^x av_s \widetilde{u}_s\,ds\right]+\lim_{x\searrow 1}\left[\int_x^d \left(au_s\right)_s \widetilde{v}\,ds+\int_x^d av_s \widetilde{u}_s\,ds\right] \\
=\lim_{x\nearrow 1}\left[-\int_c^x au_s \widetilde{v}_s\,ds-\int_c^x v\left(a \widetilde{u}_s\right)_s\,ds\right]+\lim_{x\searrow 1}\left[-\int_x^d au_s \widetilde{v}_s\,ds-\int_x^d v\left( a \widetilde{u}_s\right)_s\,ds\right]\\
+ \left[\lim_{x\nearrow 1}a(x)u_x(x) \widetilde{v}(x)- \lim_{x\searrow 1}a(x)u_x(x) \widetilde{v}(x)\right]\\
+ \left[\lim_{x\nearrow 1}a(x)\widetilde{u}_x(x) v(x)- \lim_{x\searrow 1}a(x)\widetilde{u}_x(x) v(x)\right]\\
\stackrel{\text{by t.c.}}{=}-\int_{\Omega} \left(a\widetilde{u}_x\right)_x v\,dx-\int_{\Omega} a\widetilde{v}_x u_x\,dx=
\left<\left[ u\atop v\right],\left[-\widetilde{v}\atop -\left(a\widetilde{u}_x\right)_x\right]\right>_{\mathcal{H}_a}.
\end{multline*}
Hence,
$\mathcal{A}^\ast\left[\widetilde{u}\atop \widetilde{v}\right]=\left[-\widetilde{v}\atop -\left(a\widetilde{u}_x\right)_x\right]$,
and arguing as in \eqref{2.1.2}, we deduce that $\mathcal{A}^\ast$ is a dissipative operator. Thus, $\mathcal{A}:D(\mathcal{A})\subset \mathcal{H}_a\rightarrow \mathcal{H}_a$ generates a strongly continuous semi-group of contraction operators.
\end{proof}

 For  further convenience, let us denote this semi-group by $e^{\mathcal{A}t}$.
Then for any $U_0=\left[u_0\atop v_0\right]\in \mathcal{H}_a$, the representation $U(t)= e^{\mathcal{A} t} U_0$ gives the so-called $V^1_a$-mild solution of the Cauchy problem
\begin{equation}
\label{2.4aa}
\left\{
\begin{array}{rcl}
\ds \frac{d}{dt} U(t) &=& \mathcal{A} U(t),\quad t>0,\\
U(0) &=& U_0.
\end{array}
\right.
\end{equation}
When  $U_0\in D(\mathcal{A})$, the solution $U(t)= e^{\mathcal{A} t} U_0$ is classical in the sense that 
$$
U(\cdot)\in C^1([0,\infty); \mathcal{H}_a)\cap C([0,\infty);D(\mathcal{A}))
$$ 
and equation \eqref{2.4aa} holds on $[0,\infty)$.

Thus, in view of the above consideration, we say that, for given $y_0\in V^1_{a,0}(\Omega)$ and $y_1\in L^2(\Omega)$, the function
\[
y\in C^1([0,T]; L^2(\Omega))\cap C([0,T]; V^1_{a,0}(\Omega))
\]
is the $V^1_a$-mild solution of problem
\begin{gather}
\label{2.5.a}
y_{tt}-\left(a(x) y_x\right)_x=0\quad\text{in }\ (0,T)\times\Omega_i,\ i=1,2,\\
\label{2.5.b}
y(t,c)=0,\quad y(t,d)=0,\quad t\in (0,T),\\
\label{2.5.c}
y(0,x)=y_0(x),\quad y_t(0,x)=y_1(x),\quad x\in\Omega,\\
\label{2.5.d}
\text{with the transmission conditions \eqref{1.1.6a}--\eqref{1.1.6b} or \eqref{1.1.7a}--\eqref{1.1.7b},}
\end{gather} 
if $\left[y(t)\atop v(t)\right]=e^{\mathcal{A} t}\left[y_0\atop y_1\right]$ for all $t\in[0,T]$. By the aforementioned regularity result for  $e^{\mathcal{A} t}$,
if
\[
\left[y_0\atop y_1\right]\in W^2_a(\Omega)\times V^1_{a,0}(\Omega),
\]
then $y$ is the $V^1_a$-classical solution of \eqref{2.5.a}--\eqref{2.5.d} meaning that
\[
y\in C^2([0,T]; L^2(\Omega))\cap C^1([0,T]; V^1_{a,0}(\Omega))\cap C([0,T];W^2_a(\Omega))
\]
and the equation  \eqref{2.5.a} is satisfied for all $t\in [0,T]$ and a.e. $x\in \Omega_0$.

The energy of a $V^1_a$-mild solution $y$ of \eqref{2.5.a}--\eqref{2.5.d} is the continuous function defined by 
\[
E_y(t)=\frac{1}{2}\int_{\Omega_0}\left[y^2_t(t,x)+a(x)y^2_x(t,x)\right]\,dx,\quad \forall\,t\ge 0.
\]
\begin{proposition}
\label{Prop 2.2}
Let $a:\overline{\Omega}\rightarrow\mathbb{R}$ be a weight function satisfying properties (i)--(iii), and let $y$ be a $V^1_a$-mild solution of \eqref{2.5.a}--\eqref{2.5.d}. Then
\begin{equation}
\label{2.3}
E_y(t)=E_y(0),\quad\forall\,t\ge 0.
\end{equation}
\end{proposition}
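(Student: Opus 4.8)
The plan is to establish \eqref{2.3} first for $V^1_a$-classical solutions (i.e.\ when $U_0:=\left[y_0\atop y_1\right]\in D(\mathcal{A})$) by a direct energy computation, and then to pass to arbitrary $V^1_a$-mild solutions by density. The starting observation is that, writing $U(t)=\left[y(t)\atop y_t(t)\right]=e^{\mathcal{A}t}U_0$, one has $2E_y(t)=\|U(t)\|_{\mathcal{H}_a}^2$, so \eqref{2.3} is precisely the assertion that $t\mapsto\|e^{\mathcal{A}t}U_0\|_{\mathcal{H}_a}$ is constant.

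\emph{Classical solutions.} Let $U_0\in D(\mathcal{A})$, so that $U(\cdot)\in C^1([0,T];\mathcal{H}_a)\cap C([0,T];D(\mathcal{A}))$ and \eqref{2.5.a} holds pointwise in $t$, a.e.\ in $\Omega_0$. Then $E_y$ is differentiable and
\[
\frac{d}{dt}E_y(t)=\Big\langle U'(t),U(t)\Big\rangle_{\mathcal{H}_a}=\Big\langle\mathcal{A}U(t),U(t)\Big\rangle_{\mathcal{H}_a}.
\]
Equivalently, substituting $y_{tt}=(a y_x)_x$ and integrating by parts separately over $\Omega_1=(c,1)$ and $\Omega_2=(1,d)$ (legitimate since $ay_x\in W^{1,2}$ on each subinterval and $y_t(t,\cdot)\in V^1_{a,0}(\Omega)$), all interior terms cancel and only boundary terms survive:
\[
\frac{d}{dt}E_y(t)=\Big[a y_x y_t\Big]_{x=c}^{x\nearrow 1}+\Big[a y_x y_t\Big]_{x\searrow 1}^{x=d}
=\lim_{x\nearrow 1}\big(a y_x y_t\big)-\lim_{x\searrow 1}\big(a y_x y_t\big),
\]
where the contributions at $x=c$ and $x=d$ vanish because the Dirichlet data are time–independent, hence $y_t(t,c)=y_t(t,d)=0$ (the traces at $c$ and $d$ are continuous functionals on $V^1_{a,0}(\Omega)$ and $y\in C^1([0,T];V^1_{a,0}(\Omega))$). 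For $U(t)\in D(\mathcal{A})$ the remaining two limits cancel: this is exactly the computation \eqref{2.1.2}, which in case (I) uses $\lim_{x\nearrow1}ay_x=\lim_{x\searrow1}ay_x$ together with the continuity of $y_t(t,\cdot)$ across $x=1$ (Theorem~\ref{Th 1.2.000}), and in case (II) uses \eqref{1.1.1ee} with $\varphi=y(t,\cdot)$ together with the transmission condition \eqref{1.1.7b} built into $D(\mathcal{A})$. Hence $\frac{d}{dt}E_y(t)\equiv0$ and \eqref{2.3} holds for classical solutions.

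\emph{Density argument.} For general $y_0\in V^1_{a,0}(\Omega)$, $y_1\in L^2(\Omega)$, choose $U_0^n\in D(\mathcal{A})$ with $U_0^n\to U_0$ in $\mathcal{H}_a$ (recall $D(\mathcal{A})$ is dense in $\mathcal{H}_a$) and let $y^n$ be the corresponding classical solutions. Since $e^{\mathcal{A}t}$ is a contraction semi-group (Lemma~\ref{Lemma 2.1.2}),
\[
\sup_{t\in[0,T]}\Big\|e^{\mathcal{A}t}U_0^n-e^{\mathcal{A}t}U_0\Big\|_{\mathcal{H}_a}\le\|U_0^n-U_0\|_{\mathcal{H}_a}\to0,
\]
so $2E_{y^n}(t)=\|e^{\mathcal{A}t}U_0^n\|_{\mathcal{H}_a}^2\to\|e^{\mathcal{A}t}U_0\|_{\mathcal{H}_a}^2=2E_y(t)$ for every $t\ge0$; passing to the limit in $E_{y^n}(t)=E_{y^n}(0)$ yields \eqref{2.3}. (Alternatively, one may avoid the density step altogether: the proof of Lemma~\ref{Lemma 2.1.2} shows $\langle\mathcal{A}U,U\rangle_{\mathcal{H}_a}=0$ on $D(\mathcal{A})$ and $\mathcal{A}^\ast=-\mathcal{A}$, so $-\mathcal{A}$ also satisfies the Lumer–Phillips hypotheses and generates a contraction semi-group; hence $e^{\mathcal{A}t}$ is a unitary group and $\|e^{\mathcal{A}t}U_0\|_{\mathcal{H}_a}=\|U_0\|_{\mathcal{H}_a}$ for all $t$.)

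The only genuinely delicate point is the vanishing, as $x\to1^\pm$, of the flux–velocity product $a(x)y_x(t,x)y_t(t,x)$ in the strongly degenerate case $1/a\notin L^1(\Omega)$, where $y_t(t,\cdot)$ need not remain bounded near the defect; this is precisely what \eqref{1.1.1ee} and the transmission condition \eqref{1.1.7b} (encoded in $D(\mathcal{A})$) are designed to provide, and it is already carried out in \eqref{2.1.2}. Everything else is a routine integration by parts plus a contraction-semigroup limit.
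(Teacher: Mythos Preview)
Your proof is correct and follows essentially the same route as the paper's: multiply by $y_t$, integrate by parts on $\Omega_1\cup\Omega_2$, kill the boundary terms at $x=1$ via the transmission conditions (exactly the computation \eqref{2.1.2}), and then pass from classical to mild solutions by approximation. You supply more detail on the density step (the paper only says ``by approximation arguments'') and add the pleasant alternative observation that $\mathcal{A}^\ast=-\mathcal{A}$ makes $e^{\mathcal{A}t}$ unitary, which gives \eqref{2.3} in one stroke.
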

\begin{proof}
Suppose, first, that $y$ is a $V^1_a$-classical solution of \eqref{2.5.a}--\eqref{2.5.d}. Then, multiplying the
equation by $y_t$ and integrating by parts, in view of the transmission conditions \eqref{1.1.6a}--\eqref{1.1.6b} or \eqref{1.1.7a}--\eqref{1.1.7b},  we obtain
\begin{align*}
0=\int_{\Omega_0} &y_t(t,x)y_{tt}(t,x)\,dx-\sum_{i=1}^2 \int_{\Omega_i} y_t(t,x)\left(a(x)y_x(t,x)\right)_x\,dx\\
&=\int_{\Omega_0} \left[y_t(t,x)y_{tt}(t,x)+a(x)y_x(t,x)y_{xt}(t,x)\right]\,dx\\
&\quad-\left[y_t(t,x)a(x)y_x(t,x)\right]_{x=c}^{x=1}
-\left[y_t(t,x)a(x)y_x(t,x)\right]_{x=1}^{x=d}\\
&=\frac{d}{dt} E_y(t)-y_t(t,1)\left(\lim_{x\nearrow 1}\left[a(x)y_x(t,x)\right] - \lim_{x\searrow 1}\left[a(x)y_x(t,x)\right]\right),
\end{align*}
where the last term vanishes because of the transmission conditions. Thus, we conclude that the energy of the $V^1_a$-classical solution $y$ is constant. The same conclusion can be extended to any $V^1_a$-mild solution by  approximation
arguments.
\end{proof}

\section{On Boundary Observability}
\label{Sec_3}
For a given weight function $a:\overline{\Omega}\rightarrow\mathbb{R}$ with properties (i)--(iii), we fix some intermediate space $V^1_{a,0}(\Omega)$ with $H^1_{a,0}(\Omega)\subseteq V^1_{a,0}(\Omega)\subseteq W^1_{a,0}(\Omega)$.
We say that the system \eqref{2.5.a}--\eqref{2.5.d} is boundary $V^1_a$-observable (via the normal derivative at $x = c$ and $x=d$)
in time $T > 0$ if there exists a constant $C_T > 0$ such that for any $y_0\in V^1_{a,0}(\Omega)$ and $y_1\in L^2(\Omega)$ the $V^1_a$-mild solution of \eqref{2.5.a}--\eqref{2.5.d} satisfies the estimate
\begin{equation}
\label{3.1}
\int_0^T y_x^2(t,c)\,dt+\int_0^T y_x^2(t,d)\,dt\ge C_T\, E_y(0).
\end{equation}
Any constant satisfying  \eqref{3.1} is called an $V^1_a$-observability constant for \eqref{2.5.a}--\eqref{2.5.d} in time $T$.
We denoted the supremum of all $V^1_a$-observability constants for \eqref{2.5.a}--\eqref{2.5.d} by $C_T$.

\begin{lemma}
	\label{Lemma 3.1}
For any $V^1_a$-mild solution $y(t,x)$ of \eqref{2.5.a}--\eqref{2.5.d}, we have that $y_x(\cdot,c)\in L^2(0,T)$ and $y_x(\cdot,d)\in L^2(0,T)$ for any $T>0$, and
\begin{gather}
\label{4.1.1}
a(c)\int_0^T y_x^2(t,c)\,dt\le \frac{1}{1-c}\left[\max\{1,C_a^2\}+2T+2T\max\limits_{i=1,2}\left\{\kappa_{i,a}\mu_{i,a} \right\}\right] E_y(0),\\
\label{4.1.1a}
a(d)\int_0^T y_x^2(t,d)\,dt\le \frac{1}{d-1}\left[\max\{1,C_a^2\}+2T+2T\max\limits_{i=1,2}\left\{\kappa_{i,a}\mu_{i,a} \right\}\right] E_y(0),
\end{gather}
where the constant $C_a$ is defined by \eqref{1.0.1dd}. 
Moreover,
\begin{multline}
\label{4.1.2}
(1-c)a(c)\int_0^T y^2_x(t,c)\,dt+(d-1)a(d) \int_0^T y^2_x(t,d)\,dt\\
=
2\int_{\Omega_0}\left[(x-1)y_x(t,x)y_{t}(t,x)\right]_{t=0}^{t=T}\,dx\\
+\int_0^T\int_{\Omega_0}\left(y_t^2(t,x)+\left[1-\frac{(x-1)a_x(x)}{a(x)}\right] a(x)y^2_x(t,x)\right)\,dx\,dt.
\end{multline}
\end{lemma}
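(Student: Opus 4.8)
The plan is to establish \eqref{4.1.2} first by the classical multiplier method with the multiplier $(x-1)y_x$, and then derive the one-sided bounds \eqref{4.1.1}--\eqref{4.1.1a} from it together with the energy conservation of Proposition~\ref{Prop 2.2} and the Friedrichs inequality of Theorem~\ref{Th 2.0.1}. As usual, it suffices to argue for a $V^1_a$-classical solution, since both sides of the identities depend continuously on the data in the relevant norms and classical solutions are dense; in particular this a posteriori justifies that $y_x(\cdot,c), y_x(\cdot,d)\in L^2(0,T)$.

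For the identity \eqref{4.1.2}: I would multiply the equation \eqref{2.5.a} by $(x-1)y_x$ and integrate over $(0,T)\times\Omega_i$ for $i=1,2$, then sum. The term $\int (x-1)y_x y_{tt}$ is handled by integrating by parts in $t$, producing the boundary-in-time term $\left[(x-1)y_xy_t\right]_{t=0}^{t=T}$ and $-\int(x-1)y_{xt}y_t = -\tfrac12\int(x-1)\partial_x(y_t^2) = \tfrac12\int y_t^2 - \tfrac12\left[(x-1)y_t^2\right]$; since $y_t(t,\cdot)\in V^1_{a,0}(\Omega)$ vanishes at $x=c,d$, and by \eqref{1.1.1a} or \eqref{1.0a}--\eqref{1.0aa} the jump term at $x=1$ also disappears, only $\tfrac12\int_0^T\int_{\Omega_0}y_t^2$ survives. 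The term $-\int(x-1)y_x(ay_x)_x$ is integrated by parts in $x$: writing $(x-1)y_x(ay_x)_x = \partial_x\!\big((x-1)ay_x^2\big) - (ay_x^2) - (x-1)a_xy_x\cdot y_x$ after expanding, one gets, modulo the algebra, $-\tfrac12\big[(x-1)a y_x^2\big]$ plus $\tfrac12\int(1 - (x-1)a_x/a)\,a y_x^2$ over each subinterval; the interior boundary contributions at $x=1$ vanish by the transmission condition \eqref{1.1.1e} (case II) or its analogue (case I), while at $x=c$ and $x=d$ they give exactly $-\tfrac12(1-c)a(c)y_x^2(t,c) - \tfrac12(d-1)a(d)y_x^2(t,d)$ with the correct signs. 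Collecting all pieces and multiplying by $2$ yields \eqref{4.1.2}.

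For the one-sided estimates: from \eqref{4.1.2}, and using $(d-1)a(d)y_x^2(t,d)\ge 0$, I bound the left side below by $(1-c)a(c)\int_0^T y_x^2(t,c)\,dt$. On the right, I estimate each term using $E_y(t)=E_y(0)$: the time-boundary term $2\int_{\Omega_0}\big[(x-1)y_xy_t\big]_{t=0}^{t=T}$ is controlled by $\int_{\Omega_0}\big((x-1)^2 a^{-1}\cdot a y_x^2 + y_t^2\big)$ at $t=T$ and $t=0$, and using $|x-1|^2\le \max_i\kappa_{i,a}\mu_{i,a}$-type bounds together with \eqref{1.0.1b}--\eqref{1.0.1c}; more cleanly, one can use $\|(x-1)y_x\|_{L^2}^2 \le C_a^2 \|\sqrt{a}\,y_x\|_{L^2}^2$ (the same computation as \eqref{1.1.2}/\eqref{2.6a}) to get a bound $\le \big(\max\{1,C_a^2\}\big)(E_y(T)+E_y(0)) = 2\max\{1,C_a^2\}E_y(0)$, but to match the stated constant one should instead keep $C_a^2$ only from the $t=0,T$ terms and bound the double integral separately. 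The double-integral term $\int_0^T\int_{\Omega_0}\big(y_t^2 + (1 - (x-1)a_x/a)ay_x^2\big)$ is bounded, using $|(x-1)a_x/a|\le \kappa_{i,a}\mu_{i,a}$ on $\Omega_i$ by \eqref{1.1b}, by $\big(1 + \max_i\kappa_{i,a}\mu_{i,a}\big)\int_0^T\int_{\Omega_0}\big(y_t^2 + ay_x^2\big)\le 2T\big(1+\max_i\kappa_{i,a}\mu_{i,a}\big)E_y(0)$; rearranging gives \eqref{4.1.1}, and the symmetric argument (dropping the $x=c$ term and bounding below by the $x=d$ term) gives \eqref{4.1.1a}.

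The main obstacle, and the point demanding care, is the rigorous vanishing of all the boundary contributions at the interior degeneration point $x=1$ when integrating by parts — precisely the terms $\lim_{x\to 1^\mp}(x-1)a(x)y_x^2(t,x)$, $\lim(x-1)y_t^2$, and the flux/trace products — which is exactly what Theorems~\ref{Th 1.2.000} and \ref{Th 1.1} were set up to guarantee, but only for $V^2_a$-regularity; hence the reduction to classical solutions and the density/continuity closure argument must be spelled out. A secondary technical nuisance is bookkeeping the constant: one must be careful to route $C_a^2$ only through the $t=0,T$ trace terms and the factors of $2T$ through the space-time integral, so as to land on the asymmetric-looking constant $\max\{1,C_a^2\}+2T+2T\max_i\{\kappa_{i,a}\mu_{i,a}\}$ rather than a coarser one.
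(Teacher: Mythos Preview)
Your proposal is correct and follows essentially the same route as the paper: multiply by the multiplier $(x-1)y_x$, integrate by parts over each $\Omega_i$, use the transmission properties (Theorems~\ref{Th 1.2.000} and~\ref{Th 1.1}) to kill the interior traces at $x=1$, obtain the identity \eqref{4.1.2}, and then combine the pointwise bounds $(x-1)^2/a(x)\le C_a^2/4$ and $|(x-1)a_x/a|\le\kappa_{i,a}\mu_{i,a}$ with energy conservation to read off \eqref{4.1.1}--\eqref{4.1.1a}, closing by density for mild solutions. Your identification of the two delicate points (vanishing of the $x=1$ boundary terms, which requires $V^2_a$-regularity and hence the reduction to classical solutions, and the constant bookkeeping) matches exactly what the paper emphasizes.
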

\begin{proof}
To begin with, we assume that $\left[y_0\atop y_1\right]\in W^2_a(\Omega)\times V^1_{a,0}(\Omega)$, that is, $y$ given by the formula $\left[y(t)\atop v(t)\right]=e^{\mathcal{A} t}\left[y_0\atop y_1\right]$ is a $V^1_a$-classical solution of the problem \eqref{2.5.a}--\eqref{2.5.d}. Following in many aspects \cite[Lemma 3.2]{Cannarsa}, we multiply  equation \eqref{2.5.a} by $(x-1)y_x$. Integrating over $(0, T)\times \Omega_0$, we obtain 
\begin{align}
\notag
0&=\int_0^T\int_{\Omega_0}(x-1)y_x(t,x)\left(y_{tt}(t,x)-\left(a(x) y_x(t,x)\right)_x\right)\,dx\,dt\\
\notag
&=\int_{\Omega_0}\left[(x-1)y_x(t,x)y_{t}(t,x)\right]_{t=0}^{t=T}\,dx-
\int_0^T\int_{\Omega_0}(x-1)y_{tx}(t,x)y_{t}(t,x)\,dx\,dt\\
\notag
&\quad-\int_0^T\left[(x-1)a(x)y^2_x(t,x)\right]^{x=d}_{x=c}\,dt+\int_0^T\int_{\Omega_0}\Big(a(x)y_x+(x-1)a(x)y_{xx}\Big)y_x\,dx\,dt\\
\notag
&=\int_{\Omega_0}\left[(x-1)y_x(t,x)y_{t}(t,x)\right]_{t=0}^{t=T}\,dx
-\int_0^T\left[(x-1)a(x)y^2_x(t,x)\right]^{x=d}_{x=c}\,dt\\
\label{4.1.3}
&\quad -\int_0^T\int_{\Omega_0}\left((x-1)\left[\frac{y^2_t}{2}\right]_x-(x-1)a(x)\left[\frac{y^2_x}{2}\right]_x-a(x)y_x^2\right)\,dx\,dt
\end{align}
After integration of the last two term, we have
\begin{align}
\notag
\int_0^T\int_{\Omega_0} &(x-1)\left[\frac{y^2_t(t,x)}{2}\right]_x\,dx\,dt=
-\frac{1}{2}\int_0^T\int_{\Omega_0}y^2_t(t,x)\,dx\,dt\\
\notag
&+\frac{1}{2}\int_0^T\left[(x-1)y^2_t(t,x)\right]_{x=c}^{x=1}\,dt+\frac{1}{2}\int_0^T\left[(x-1)y^2_t(t,x)\right]_{x=1}^{x=d}\,dt\\
\label{4.1.4}
&\stackrel{\text{by \eqref{1.1.1a}, \eqref{2.5.b}}}{=}
-\frac{1}{2}\int_0^T\int_{\Omega_0}y^2_t(t,x)\,dx\,dt,\\
\notag
\int_0^T\int_{\Omega_0} &(x-1)a(x)\left[\frac{y^2_x(t,x)}{2}\right]_x\,dx\,dt=
-\frac{1}{2}\int_0^T\int_{\Omega_0}\left[(x-1)a(x)\right]_x y^2_x(t,x)\,dx\,dt\\
\notag
&\quad+\frac{1}{2}\int_0^T \left[(x-1)a(x)y^2_x(t,x)\right]_{x=c}^{x=1}\,dt
+\frac{1}{2}\int_0^T \left[(x-1)a(x)y^2_x(t,x)\right]_{x=1}^{x=d}\,dt\\
\notag
&\stackrel{\text{by \eqref{1.1.1e}, \eqref{2.5.b}}}{=}
\frac{(1-c)a(c)}{2} \int_0^T y^2_x(t,c)\,dt+\frac{(d-1)a(d)}{2} \int_0^T y^2_x(t,d)\,dt\\
\label{4.1.5}
&\quad-\frac{1}{2}\int_0^T\int_{\Omega_0}\left[(x-1)a(x)\right]_x y^2_x(t,x)\,dx\,dt.
\end{align}
As a result, the identity \eqref{4.1.2} follows by inserting \eqref{4.1.4} and 
\eqref{4.1.5} into \eqref{4.1.3}. To deduce the estimate \eqref{4.1.1}, it is enough to notice that 
\begin{align}
\notag
&\left|\int_{\Omega_0}(x-1)y_x(t,x)y_{t}(t,x)\,dx\right|\le \frac{1}{2}\int_{\Omega_0}\left[y^2_{t}(t,x)+\frac{(x-1)^2}{a(x)} a(x)y^2_x(t,x)\right]\,dx\\
\notag
&\stackrel{\text{by \eqref{1.0.1d}--\eqref{1.0.1e}}}{\le}E_y(0)\max\left\{1,\frac{ (1-c)^{\mu_{1,a}}}{\min\limits_{x\in[c,x_1^\ast]} a(x)}, \frac{(1-x_1^\ast)^{\mu_{1,a}}}{a(x_1^\ast)},\frac{ (d-1)^{\mu_{2,a}}}{\min\limits_{x\in[x_2^\ast,d]} a(x)}, \frac{(x_2^\ast-1)^{\mu_{2,a}}}{a(x_2^\ast)}\right\}\\
\label{4.1.5.1}
&\stackrel{\text{by \eqref{1.0.1dd}}}{=} \frac{\max\{4,C_a^2\}}{4} E_y(0),\\
\notag
&\left[1-\frac{(x-1)a_x(x)}{a(x)}\right]\stackrel{\text{by \eqref{1.0.A1}, \eqref{1.0.A2}}}{\le} 1+\max\left\{\kappa_{1,a}\mu_{1,a},\kappa_{2,a}\mu_{2,a}\right\}\quad\text{in }\ \Omega,
\end{align}
and  the energy $E_y(t)$ is constant. 

In order to extend relations \eqref{4.1.1} and \eqref{4.1.2} to the $V^1_a$-mild solution associated with the initial data $y_0\in V^1_{a,0}(\Omega)$ and $y_1\in L^2(\Omega)$, it suffices to approximate such data by $\left[y^k_0\atop y^k_1\right]\in W^2_a(\Omega)\times V^1_{a,0}(\Omega)$ and use estimate \eqref{4.1.1} to show that the normal derivatives of the corresponding classical solutions give a Cauchy sequence in $L^2(0,T)$.
\end{proof}

\begin{lemma}
	For any $V^1_a$-mild solution $y(t,x)$ of \eqref{2.5.a}--\eqref{2.5.d} we have: for each $T>0$,
	\begin{equation}
	\label{4.1.6}
	\int_0^T\int_{\Omega_0} \Big[a(x) y^2_x(t,x)-y_{t}^2(t,x)\Big]\,dx\,dt+\int_{\Omega_0} \left[y(t,x) y_{t}(t,x)\right]_{t=0}^{t=T}\,dx=0.
	\end{equation}
\end{lemma}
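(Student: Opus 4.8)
The plan is to repeat the multiplier argument used in Lemma~\ref{Lemma 3.1}, but now with the multiplier $y$ itself in place of $(x-1)y_x$. As before, I would first establish the identity for a $V^1_a$-classical solution (i.e.\ assuming $\left[y_0\atop y_1\right]\in W^2_a(\Omega)\times V^1_{a,0}(\Omega)$) and then pass to a general $V^1_a$-mild solution by the same density/Cauchy-sequence argument already invoked at the end of the proof of Lemma~\ref{Lemma 3.1}.

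For the classical case: multiply equation \eqref{2.5.a} by $y(t,x)$ and integrate over $(0,T)\times\Omega_0$. The term $\int_0^T\int_{\Omega_0} y\,y_{tt}\,dx\,dt$ is integrated by parts in $t$, producing $\left[\int_{\Omega_0} y\,y_t\,dx\right]_{t=0}^{t=T}-\int_0^T\int_{\Omega_0} y_t^2\,dx\,dt$. The term $-\int_0^T\int_{\Omega_0} y\,(a y_x)_x\,dx\,dt$ is integrated by parts in $x$ over each of $\Omega_1$ and $\Omega_2$ separately, producing $\int_0^T\int_{\Omega_0} a y_x^2\,dx\,dt$ together with boundary contributions of the form $-\left[y\,a y_x\right]$ evaluated at the endpoints $x=c$, $x=d$ and at the inner point $x=1$ (from both sides). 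The endpoint terms at $x=c$ and $x=d$ vanish by the homogeneous boundary conditions \eqref{2.5.b}. The contributions at $x=1$ are exactly $\lim_{x\nearrow 1} a(x)y_x(t,x)y(t,x)$ and $\lim_{x\searrow 1} a(x)y_x(t,x)y(t,x)$: in case (I) these cancel against each other by the continuity of $y$ and of the flux \eqref{1.1.6a}--\eqref{1.1.6b}, while in case (II) each of them vanishes individually by \eqref{1.1.7b} combined with the boundedness of $y$ near $1$ coming from \eqref{1.1.1b} (or, more directly, by the already-established relation \eqref{1.1.1ee}). Collecting the surviving terms gives precisely \eqref{4.1.6}.

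The only real point requiring care is the justification that all these boundary terms at $x=1$ indeed vanish, since $y(t,\cdot)$ need not be continuous at $x=1$ in the strongly degenerate case; this is where the structural results of Theorem~\ref{Th 1.1}, in particular \eqref{1.1.1d}--\eqref{1.1.1ee}, do the work, guaranteeing $a(x)y_x(t,x)\to 0$ and $a(x)y_x(t,x)y(t,x)\to 0$ as $x\to 1^\pm$ for the classical solution. A secondary technical point is that the integrations by parts in $x$ must be performed on $(c,1)$ and $(1,d)$ and the limits $x\nearrow 1$, $x\searrow 1$ taken afterwards, exactly as in \eqref{4.1.3}; the finiteness of $\int_0^T\int_{\Omega_0} a y_x^2\,dx\,dt$ and $\int_0^T\int_{\Omega_0} y_t^2\,dx\,dt$ (both controlled by $TE_y(0)$ via Proposition~\ref{Prop 2.2}) ensures every term is integrable, so no divergence issue arises. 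The final density argument is routine: approximate $(y_0,y_1)$ by smooth data, apply \eqref{4.1.6} to the classical solutions, and use the continuity of $y\mapsto y$ in $C([0,T];V^1_{a,0}(\Omega))\cap C^1([0,T];L^2(\Omega))$ together with the Friedrichs inequality of Theorem~\ref{Th 2.0.1} to pass to the limit in each term.
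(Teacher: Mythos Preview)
Your proposal is correct and follows essentially the same route as the paper: multiply \eqref{2.5.a} by $y$, integrate by parts in $t$ and in $x$ over $\Omega_1$ and $\Omega_2$ separately, kill the boundary terms at $x=c,d$ via \eqref{2.5.b} and the interface terms at $x=1$ via the transmission conditions, then pass from classical to mild solutions by approximation. One small caveat: \eqref{1.1.1b} does not give boundedness of $y$ near $x=1$ (only $y(x)=o(|x-1|^{-1/2})$), so your parenthetical appeal to \eqref{1.1.1ee} (equivalently, the transmission condition \eqref{1.1.7a}) is the correct justification in case~(II).
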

\begin{proof}
	Let $y$ be a $V^1_a$-classical solution of \eqref{2.5.a}--\eqref{2.5.d}. Then, multiplying equation \eqref{2.5.a} by $y$ and integrating over $(0,T)\times\Omega_0$, we obtain
	\begin{align*}
	0&=\int_0^T\int_{\Omega_0} y(t,x)\left[y_{tt}(t,x)-\left(a(x) y_x(t,x)\right)_x\right]\,dx\,dt 
	=\left[\int_{\Omega_0} y(t,x) y_{t}(t,x)\,dx\right]_{t=0}^{t=T}\\
	&\quad-\int_0^T\int_{\Omega_0} y_{t}^2(t,x)\,dx\,dt
	-\int_0^T  \left[a(x) y_x(t,x) y(t,x)\right]_{x=c}^{x=1}\,dt\\
	&\quad-\int_0^T  \left[a(x) y_x(t,x) y(t,x)\right]_{x=1}^{x=d}\,dt
	+\int_0^T\int_{\Omega_0} a(x) y^2_x(t,x)\,dx\,dt.
	\end{align*}
Since
\begin{multline*}
\int_0^T  \left[a(x) y_x(t,x) y(t,x)\right]_{x=c}^{x=1}\,dt
+\int_0^T  \left[a(x) y_x(t,x) y(t,x)\right]_{x=1}^{x=d}\,dt\\
\stackrel{\text{by \eqref{2.5.b}}}{=} \int_0^T \left[\lim_{x\nearrow 1}a(x) y_x(t,x) y(t,x)-\lim_{x\searrow 1}a(x) y_x(t,x) y(t,x)\right]\,dt =0
\end{multline*}
by the transmission conditions \eqref{1.1.6a}--\eqref{1.1.6b} or \eqref{1.1.7a}--\eqref{1.1.7b}, the announced equality \eqref{4.1.6} follows from the above identity. Then the approximation arguments allow to
extend this conclusion to $V^1_a$-mild solutions.
\end{proof}

\begin{theorem}
	\label{Th 4.1.1}
	Let $a:\overline{\Omega}\rightarrow\mathbb{R}$ be a weight function satisfying properties (i)--(iii). In addition, we assume that
	\begin{gather}
	\label{4.0.1}
	\frac{d \ln a(x)}{dx}  \ge \frac{d \ln (1-x)^{\mu_{1,a}}}{dx},\quad\forall\,x\in [c,x_1^\ast],\\
	\label{4.0.2}
	\frac{d \ln a(x)}{dx}  \le \frac{d \ln (x-1)^{\mu_{2,a}}}{dx},\quad\forall\,x\in [x_2^\ast,d].
	\end{gather}
	Let $y$ be a $V^1_a$-mild solution of \eqref{2.5.a}--\eqref{2.5.d}. Then, for every $T>0$, the estimate
	\begin{multline}
	\label{4.1.7}
	(1-c)a(c) \int_0^T y^2_x(t,c)\,dt+(d-1)a(d) \int_0^T y^2_x(t,d)\,dt\\
	\ge \left[
	\left(2-\max\{\mu_{1,a},\mu_{2,a}\}\right)T-
	\max\{4,C_a^2\}-\min\{D_a,C_a\}\max\left\{\mu_{1,a},\mu_{2,a}\right\}
	\right]E_y(0)
	\end{multline}
	holds true with $C_a$ and $D_a$ are given by relations \eqref{1.0.1dd} and \eqref{2.4ab}, respectively.
\end{theorem}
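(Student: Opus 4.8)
The plan is to combine the two identities proved in the preceding lemmas with the Friedrichs inequality from Theorem~\ref{Th 2.0.1}. Starting from the multiplier identity \eqref{4.1.2}, the right-hand side contains the term $\int_0^T\int_{\Omega_0}\big(y_t^2+[1-(x-1)a_x/a]a y_x^2\big)\,dx\,dt$. The hypotheses \eqref{4.0.1}--\eqref{4.0.2} are precisely the statement that $(x-1)a_x(x)/a(x)\le \mu_{1,a}$ on $[c,x_1^\ast]$ (where $x-1<0$ and $a_x\le 0$, so $(x-1)a_x = |x-1|\,|a_x|$) and $(x-1)a_x(x)/a(x)\le \mu_{2,a}$ on $[x_2^\ast,d]$; on the inner intervals $[x_1^\ast,1)$ and $(1,x_2^\ast]$ the same bounds hold by the very definition \eqref{1.1a} of $\mu_{1,a},\mu_{2,a}$. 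Hence $1-(x-1)a_x/a \ge 1-\max\{\mu_{1,a},\mu_{2,a}\}$ throughout $\Omega_0$, but since this can be negative we instead keep the full coefficient and bound it from below only after regrouping with the $y_t^2$ term.

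\textbf{Step 1: produce the energy term.} Use \eqref{4.1.6} to substitute $\int_0^T\int_{\Omega_0} y_t^2\,dx\,dt = \int_0^T\int_{\Omega_0} a y_x^2\,dx\,dt + \int_{\Omega_0}[y y_t]_{t=0}^{t=T}\,dx$. Insert this into \eqref{4.1.2} so that the double integral becomes $\int_0^T\int_{\Omega_0}\big(2a y_x^2 - (x-1)a_x y_x^2\big)\,dx\,dt + \int_{\Omega_0}[y y_t]_{t=0}^{t=T}\,dx$. Now write $2a y_x^2 - (x-1)a_x y_x^2 = (2-\max\{\mu_{1,a},\mu_{2,a}\}) a y_x^2 + \big(\max\{\mu_{1,a},\mu_{2,a}\} - (x-1)a_x/a\big)a y_x^2$, and discard the last (nonnegative, by Step above) term to get the lower bound $(2-\max\{\mu_{1,a},\mu_{2,a}\})\int_0^T\int_{\Omega_0} a y_x^2\,dx\,dt$. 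Since $\int_{\Omega_0} a y_x^2\,dx = \int_{\Omega_0} y_t^2\,dx$ is \emph{not} true pointwise in $t$ but $\tfrac12\int(y_t^2+ay_x^2) = E_y(t) = E_y(0)$ is constant by Proposition~\ref{Prop 2.2}, I would at this stage rather keep $\int_0^T\int_{\Omega_0} a y_x^2\,dx\,dt$ and use \eqref{4.1.6} once more, or simply note $\int_0^T\int(y_t^2+ay_x^2)\,dx\,dt = 2T E_y(0)$ and $\int_0^T\int(ay_x^2-y_t^2) = -\int_{\Omega_0}[y y_t]_{t=0}^{t=T}$, whence $\int_0^T\int_{\Omega_0} a y_x^2\,dx\,dt = T E_y(0) - \tfrac12\int_{\Omega_0}[yy_t]_{t=0}^{t=T}\,dx$.

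\textbf{Step 2: absorb the boundary-in-time terms.} The terms $\int_{\Omega_0}[(x-1)y_x y_t]_{t=0}^{t=T}\,dx$ and $\int_{\Omega_0}[y y_t]_{t=0}^{t=T}\,dx$ are each bounded in absolute value by $2\cdot\frac{\max\{4,C_a^2\}}{4}E_y(0)$ (for the first, this is exactly \eqref{4.1.5.1}) and by $2\cdot\tfrac12\min\{D_a,C_a\}^2 \cdot(\text{something})$ — more precisely $|\int_{\Omega_0} y y_t\,dx| \le \tfrac12\|y\|_{L^2}^2 + \tfrac12\|y_t\|_{L^2}^2 \le \tfrac12\min\{D_a,C_a\}^2\int a y_x^2\,dx + \tfrac12\int y_t^2\,dx$, which via $E_y$ conservation is $\le \max\{1,\min\{D_a,C_a\}^2\}E_y(t) = \max\{1,\min\{D_a,C_a\}^2\}E_y(0)$. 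Collecting all constants with the right signs and comparing with the announced coefficient $\max\{4,C_a^2\}+\min\{D_a,C_a\}\max\{\mu_{1,a},\mu_{2,a}\}$ fixes the bookkeeping; note that $\min\{D_a,C_a\}\le \min\{D_a,C_a\}^2$ need not hold, so one must be slightly careful and perhaps use $\min\{D_a,C_a\}\le\max\{4,C_a^2\}$ type crude bounds, or track the Friedrichs constant linearly as it appears in \eqref{1.0.1ddd}. Finally divide through: the left side of \eqref{4.1.2} is exactly the left side of \eqref{4.1.7}, and the extension from classical to mild solutions is the same density argument used in Lemma~\ref{Lemma 3.1}.

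\textbf{Main obstacle.} The delicate point is \emph{the sign handling in Step~1}: the coefficient $1-(x-1)a_x/a$ genuinely changes sign when $\max\{\mu_{i,a}\}>1$ (the strongly degenerate case), so one cannot bound the integrand pointwise without first trading $\int y_t^2$ against $\int a y_x^2$ via \eqref{4.1.6}; getting the constant $2-\max\{\mu_{1,a},\mu_{2,a}\}$ rather than something worse requires doing this exchange \emph{before} dropping the nonnegative remainder, and checking that the hypotheses \eqref{4.0.1}--\eqref{4.0.2} together with definition \eqref{1.1a} really give $(x-1)a_x/a\le\max\{\mu_{1,a},\mu_{2,a}\}$ on \emph{all} of $\Omega_0$ (including across the degeneracy point, where one uses the one-sided limits in \eqref{0.0a}--\eqref{0.0b}). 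The rest is routine Cauchy--Schwarz and arithmetic with the Friedrichs constant.
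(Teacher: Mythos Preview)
Your approach is correct and, once the algebra is carried through, is equivalent to the paper's proof. The paper combines \eqref{4.1.2} and \eqref{4.1.6} in a slightly different order: rather than fully substituting $\int y_t^2$ via \eqref{4.1.6} and then re-expressing $\int_0^T\int a y_x^2$ using energy conservation (your two-step route), the paper simply adds $B_a:=\tfrac12\max\{\mu_{1,a},\mu_{2,a}\}$ times the identity \eqref{4.1.6} to \eqref{4.1.2}, producing the symmetric pair
\[
I_3+I_4=\int_0^T\!\!\int_{\Omega_0}(1-B_a)y_t^2\,dx\,dt+\int_0^T\!\!\int_{\Omega_0}\Big[1+B_a-\tfrac{(x-1)a_x}{a}\Big]a y_x^2\,dx\,dt,
\]
which is then bounded below by $(1-B_a)\cdot 2TE_y(0)=(2-\max\{\mu_{i,a}\})TE_y(0)$ directly. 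If you trace your substitution, the coefficient that survives in front of $\int[yy_t]_{0}^{T}$ is exactly $B_a$, so the two routes are algebraically the same.

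The one place your write-up is genuinely incomplete is the bound on $\int_{\Omega_0} y\,y_t\,dx$ in Step~2: applying Cauchy--Schwarz with equal weights gives $\min\{D_a,C_a\}^2$, not the linear constant in \eqref{4.1.7}. The paper's device is the \emph{weighted} Young inequality
\[
\Big|\int_{\Omega_0} y\,y_t\,dx\Big|\le \frac{1}{2\min\{D_a,C_a\}}\int_{\Omega_0} y^2\,dx+\frac{\min\{D_a,C_a\}}{2}\int_{\Omega_0} y_t^2\,dx\le \min\{D_a,C_a\}\,E_y(0),
\]
where the last step uses Friedrichs \eqref{1.0.1ddd} on the first term. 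You flagged this (``track the Friedrichs constant linearly''), and this is precisely how to do it; with that fix your argument yields exactly \eqref{4.1.7}. Your identification of the sign issue as the main obstacle, and the role of \eqref{4.0.1}--\eqref{4.0.2} together with \eqref{1.1a} in securing $(x-1)a_x/a\le\max\{\mu_{1,a},\mu_{2,a}\}$ on all of $\Omega_0$, matches the paper.
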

\begin{proof}
	Since the case of mild solutions can be recovered by approximation arguments, we restrict ourself by assumptions that $y$ is a $V^1_a$-classical solution of the problem \eqref{2.5.a}--\eqref{2.5.d}. Then adding to the right hand side of \eqref{4.1.2} the left side of \eqref{4.1.6} multiplied by
	$$
	B_a:=\frac{1}{2}\max\left\{\mu_{1,a},\mu_{2,a}\right\},
	$$ 
	we obtain 
	\begin{multline*}
	(1-c)a(c) \int_0^T y^2_x(t,c)\,dt+(d-1)a(d) \int_0^T y^2_x(t,d)\,dt\\
	=2\int_{\Omega_0}\left[(x-1)y_x(t,x)y_{t}(t,x)\right]_{t=0}^{t=T}\,dx+
	B_a \int_{\Omega_0} \left[y(t,x) y_{t}(t,x)\right]_{t=0}^{t=T}\,dx\\
	+\int_0^T\int_{\Omega_0}\left(1-B_a\right)y_t^2(t,x)\,dx\,dt\\
	+\int_0^T\int_{\Omega_0}\left(\left[1+B_a-\frac{(x-1)a_x(x)}{a(x)}\right] a(x)y^2_x(t,x)\right)\,dx\,dt
	=I_1+I_2+I_3+I_4.
	\end{multline*}
	Since
	\begin{gather*}
	-\frac{(x-1)a_x(x)}{a(x)}\ge -\frac{|x-1||a_x(x)|}{a(x)}\ge -\max\{\mu_{1,a},\mu_{2,a}\}\quad\text{in }\ [x_1^\ast,x_2^\ast],\\
	-\frac{(x-1)a_x(x)}{a(x)}\stackrel{\text{by \eqref{4.0.1}}}{\ge}-\mu_{1,a}\quad\forall\,x\in [c,x_1^\ast],\\
	-\frac{(x-1)a_x(x)}{a(x)}\stackrel{\text{by \eqref{4.0.2}}}{\ge}-\mu_{2,a}\quad\forall\,x\in [x_2^\ast,d],
	\end{gather*}
	it follows that
	\[
	I_3+I_4\ge \left(2-\max\{\mu_{1,a},\mu_{2,a}\}\right)\int_0^T E_y(0)\,dt=
	\left(2-\max\{\mu_{1,a},\mu_{2,a}\}\right)T
	 E_y(0).
	\]
	Taking into account that
	\[
	I_1=2\int_{\Omega_0}\left[(x-1)y_x(t,x)y_{t}(t,x)\right]_{t=0}^{t=T}\,dx\stackrel{\text{by \eqref{4.1.5.1}}}{\ge} -\max\{4,C_a^2\} E_y(0),
	\]
	and 
	\begin{align*}
	\left|\int_{\Omega_0} y(t,x) y_{t}(t,x)\,dx\right|&\le
	\frac{1}{2}\int_{\Omega_0} \left(\frac{1}{\min\{D_a,C_a\}} y^2(t,x)+\min\{D_a,C_a\} y_t^2(t,x)\right)\,dx
	\\
	&\le \min\{D_a,C_a\} E_y(0),
	\end{align*}
	where $\min\{D_a,C_a\}$ is Poincar\'{e}'s constant in \eqref{1.0.1ddd}, we see that
	\[
	I_2\ge -2B_a \min\{D_a,C_a\} E_y(0)=-\min\{D_a,C_a\}\max\left\{\mu_{1,a},\mu_{2,a}\right\}E_y(0).
	\]
	Thus, the announced estimate \eqref{4.1.7} is proven.
\end{proof}

Due to Theorem~\ref{Th 4.1.1}, the observability constant $C_T$ (see inequality \eqref{3.1}) for the problem  \eqref{2.5.a}--\eqref{2.5.d} in time $T$ can be derived from \eqref{4.1.7}. Namely,
\begin{multline*}
C_T=\frac{1}{\max\{(1-c)a(c),(d-1)a(d)\}} \\
\times\left[
\left(2-\max\{\mu_{1,a},\mu_{2,a}\}\right)T-
\max\{4,C_a^2\}-\min\{D_a,C_a\}\max\left\{\mu_{1,a},\mu_{2,a}\right\}
\right].
\end{multline*}

As for the minimal time $T_a>0$ when the system \eqref{2.5.a}--\eqref{2.5.d} becomes boundary $V^1_a$-observable in time $T>T_a$, it can be defined as follows
\begin{equation}
\label{4.17}
T_a:=\frac{1}{\left(2-\max\{\mu_{1,a},\mu_{2,a}\}\right)}\left[\max\{4,C_a^2\}+\min\{D_a,C_a\}\max\left\{\mu_{1,a},\mu_{2,a}\right\}\right].
\end{equation}

\begin{example}
	\label{Ex 4.1}
	Setting $c=0$, $d=2$, and $a(x)=|x-1|^p$, we see that the initial assumptions (i)--(iii) holds true  with $p\in (0,2)$, and
	\begin{equation*}
	x_1^\ast=0,\quad x_2^\ast=2,\quad\text{and}\quad \mu_{1,a}=\mu_{2,a}=p.
	\end{equation*}
	Then we deduce from \eqref{2.4ab} and \eqref{1.0.1dd} that
	$C_a^2=4$ and $D_a^2=\frac{1}{2-p}$. 
	Since 
	$$
	\min\{D_a,C_a\}=\min\left\{\sqrt{\frac{1}{2-p}},2\right\}=\left\{
	\begin{array}{ll}
	\sqrt{\frac{1}{2-p}}, &\ \text{if }\ p\in (0,\frac{7}{4}),\\
	2, &\  \text{if }\ p\in [\frac{7}{4}, 2),
	\end{array}
	\right.,
	$$
	it follows from \eqref{4.17} that 
	\begin{align*}
	T_a&=\ds\frac{p+4\sqrt{2-p}}{(2-p)\sqrt{2-p}}, \quad \text{for all }\ p\in \left(0,\frac{7}{4}\right),\\
	T_a&=\ds 2\left(\frac{2+p}{2-p}\right), \quad  \text{for all  }\ p\in \left[\frac{7}{4}, 2\right).	
	\end{align*}
	It is worth to notice here that $T_a\searrow 2$ as $p\searrow 0$. In this case, the damage effect at the middle point $x=1$ disappears and $T_a$ coincides with the classical observability time for the wave equations on the two connected planar strings (see \cite[Section 4.4]{Dager}). At the same time we aarive at the blow up of the observability time if $p\nearrow 2$.
\end{example}

\section{On Boundary Null Controllability}
\label{Sec 4}
In this section the problem of boundary controllability of the degenerate wave equation is studied. The control is assumed to act at the boundary points $x=c$ and $x=d$ through the Dirichlet conditions. So, we consider the following degenerate control system
\begin{gather}
\label{5.1.a}
y_{tt}-\left(a(x) y_x\right)_x=0\quad\text{in }\ (0,+\infty)\times\Omega_i,\ i=1,2,\\
\label{5.1.b}
y(t,c)=f_c(t),\quad y(t,d)=f_d(t),\quad t\in (0,+\infty),\\
\label{5.1.c}
y(0,x)=y_0(x),\quad y_t(0,x)=y_1(x),\quad x\in\Omega,\\
\label{5.1.d}
\text{with the transmission conditions \eqref{1.1.6a}--\eqref{1.1.6b} or \eqref{1.1.7a}--\eqref{1.1.7b},}
\end{gather} 
where $f_c, f_d\in L^2(0,T)$ are the controls. 

By analogy with the previous section, for a given weight function $a:\overline{\Omega}\rightarrow\mathbb{R}$ with properties (i)--(iii), we fix some intermediate space $V^1_{a,0}(\Omega)$ with $H^1_{a,0}(\Omega)\subseteq V^1_{a,0}(\Omega)\subseteq W^1_{a,0}(\Omega)$.
Let $V^{-1}_a(\Omega)$ be the dual space to $V^1_{a,0}(\Omega)$ with respect to the pivot space $L^2(\Omega)$. 
In order to make a precise definition of the solution to the boundary value problem \eqref{5.1.a}--\eqref{5.1.d}, where $f_c, f_d\in L^2(0,T)$ are the controls, and indicate its characteristic properties, we notice that by Theorem~\ref{Th 2.0.1} (see also \eqref{1.0.0}) the operator $A_a:D(A_a)\subset L^2(\Omega)\rightarrow L^2(\Omega)$, where $A_a(y)=-(ay_x)_x$ and $D(A_a)=\left\{ y\in V^1_{a,0}(\Omega)\ :\ a y_x\in W^{1,2}(\Omega)\right\}$, is an isomorphism from $V^1_{a,0}(\Omega)$ onto $V^{-1}_a(\Omega)$. In particular, $V^{-1}_a(\Omega)=A_a\left(V^1_{a,0}(\Omega)\right)$.
\begin{definition}
	\label{Def 5.1}
	System \eqref{5.1.a}--\eqref{5.1.d} is boundary null controllable in time $T>0$ if, for every initial data $y_0\in L^2(\Omega)$, and $y_1\in V^{-1}_a(\Omega)$, the set of reachable states $(y(T), y_t(T))$, where $y$ is a solution of \eqref{5.1.a}--\eqref{5.1.d} with $f_c, f_d\in L^2(0,T)$, contains the element $(0,0)$.
\end{definition}
\begin{definition}
	\label{Def 5.1a}
	System \eqref{5.1.a}--\eqref{5.1.d} is boundary exactly controllable in time $T>0$ if, for every initial data $y_0\in L^2(\Omega)$, and $y_1\in V^{-1}_a(\Omega)$, the set of reachable states $(y(T), y_t(T))$, coincides with $L^2(\Omega)\times V^{-1}_a(\Omega)$.
\end{definition}
\begin{remark}
	Arguing as in Proposition~2.2.1 in \cite{Zuazua}, and utilizing the linearity and reversibility properties of system \eqref{5.1.a}--\eqref{5.1.d}, it can be shown that this system is exactly controllable through the boundary Dirichlet conditions at $x=c$ and $x=d$ if and only if it is null controllable.
\end{remark}

Following the standard approach and utilizing the transmission conditions, we define the solution of  controlled system \eqref{5.1.a}--\eqref{5.1.d} by transposition. 
\begin{definition}
	\label{Def 5.2}
	Let $f_c, f_d\in L^2(0,T)$, $y_0\in L^2(\Omega)$, and $y_1\in V^{-1}_a(\Omega)$ be given distributions. We say that $y$ is a $V^1_a$-solution by transposition of the problem \eqref{5.1.a}--\eqref{5.1.d} if
	\[
	y\in C^1\left([0,\infty); V^{-1}_a(\Omega)]\right)\cap C\left([0,\infty); L^2(\Omega)\right)
	\]
	satisfies for all $T>0$ and all $w^0_T\in V^1_{a,0}(\Omega)$ and $w^1_T\in L^2(\Omega)$ the following equality
	\begin{align}
	\notag
	\left<y_t(T),w^0_T\right>_{V^{-1}_a(\Omega);V^1_{a,0}(\Omega)} &- \int_{\Omega} y(T) w^1_T\,dx \\
	\notag
	&=\left<y_1,w(0)\right>_{V^{-1}_a(\Omega);V^1_{a,0}(\Omega)}-\int_{\Omega} y_0 w_t(0)\,dx\\
	&\quad-a(d)\int_0^T f_d(t) w_x(t,d)\,dt+a(c)\int_0^T f_c(t) w_x(t,c)\,dt,
	\label{5.2}
	\end{align}
	where $w$ is the solution of the backward homogeneous equation
	\begin{equation}
	\label{5.3}
	w_{tt}-(a(x) w_x)_x=0\quad\text{in }\ (0,+\infty)\times\Omega_i,\ i=1,2
	\end{equation}
	with the final conditions
	\begin{equation}
	\label{5.4}
	w(T)=w^0_T,\quad w_t(T)=w^1_T\quad\text{ in }\ \Omega,
	\end{equation}
	the boundary conditions
	\begin{equation}
	\label{5.5}
	w(t,c)=0,\quad
	w(t,d)=0\quad\text{on }\ (0,T),
	\end{equation}
	and  the transmission conditions:
	\begin{enumerate}
		\item[(I)] For the case $1/a\in L^1(\Omega)$
		\begin{gather}
		\label{5.6a}
		\lim_{x\nearrow 1} w(t)=\lim_{x\searrow 1} w(t),\quad 0<t<T,\\
		\label{5.6b}
		\lim_{x\nearrow 1} a w_x(t)=\lim_{x\searrow 1} a w_x(t),\quad 0<t<T;
		\end{gather}
		
		\item[(II)] For the case $1/a\not\in L^1(\Omega)$
		\begin{gather}
		\label{5.7a}
		\lim_{x\nearrow 1} a\varphi_x w(t)=0=\lim_{x\searrow 1} a \varphi_x w(t),\ \forall\,\varphi\in V^1_{a,0}(\Omega),\  0<t<T,\\
		\label{5.7b}
		\lim_{x\nearrow 1} a w_x(t)=0=\lim_{x\searrow 1} a w_x(t),\quad 0<t<T.
		\end{gather}
	\end{enumerate}
\end{definition}

Following the results of Section~\ref{Sec 2} and making the change of variable $u(t,x)=w(T-t,x)$, we see that the backward problem \eqref{5.3}--\eqref{5.7b} admits a unique $V_a^1$-mild solution $w\in C^1\left([0,T]; L^2(\Omega)]\right)\cap C\left([0,T]; V^1_{a,0}(\Omega)\right)$ for each $T>0$. Moreover, arguing as in Lemma~\ref{Lemma 3.1}, it can be shown that there exists a constant $C>0$ such that
\begin{equation}
\label{5.5.1}
\int_0^T w_x^2(t,c)\,dt+\int_0^T w_x^2(t,d)\,dt\ge C\, E_w(T),
\end{equation}
where 
\[
E_w(t)=\frac{1}{2}\int_{\Omega_0}\left[w^2_t(t,x)+a(x)w^2_x(t,x)\right]\,dx=E_w(T),\quad \forall\,t\in [0,T],
\]
is the energy of a $V_a^1$-mild solution $w$ and it is conserved through time. Since 
\begin{equation}
\label{5.5.2}
E_w(T)=\frac{1}{2}\left[\|w^1_T\|^2_{L^2(\Omega)}+\|w^0_T\|^2_{V^1_{a,0}(\Omega)}\right],
\end{equation}
it follows that a $V_a^1$-mild solution $w$ of \eqref{5.3}--\eqref{5.7b} depends continuously on the data  $(w^0_T,w^1_T)\in V^1_{a,0}(\Omega)\times L^2(\Omega)$, and, therefore, the right hand side of \eqref{5.2} defines a continuous linear form with respect to $(w^0_T,w^1_T)\in V^1_{a,0}(\Omega)\times L^2(\Omega)$ $T>0$. Thus, a $V_a^1$-solution $y$ by transposition of \eqref{5.1.a}--\eqref{5.1.d} is unique in $C^1\left([0,\infty); V^{-1}_a(\Omega)]\right)\cap C\left([0,\infty); L^2(\Omega)\right)$. The following theorem is a consequence of the classical results of existence and uniquencess of solutions of nonhomogeneous evolution equations. Full details can be found in \cite{LionsMag} and \cite{Zuazua90}.
\begin{theorem}
	For any $f_c, f_d\in L^2(0,T)$ and $(y_0,y_1)\in L^2(\Omega)\times V^{-1}_a(\Omega)$ transmission problem \eqref{5.1.a}--\eqref{5.1.d}  has a unique $V_a^1$-solution defined by transposition
	\[
	\left(y, y_t\right)\in C\left([0,T]; L^2(\Omega)\times V^{-1}_a(\Omega)\right).
	\]
	Moreover, the map $\left(y_0,y_1,f_c, f_d\right)\mapsto \left\{y,y_t\right\}$ is linear and there exists a constant $C(T)>0$ such that
	\begin{multline*}
	\|y\|_{L^\infty(0,T;L^2(\Omega))}+ \|y_t\|_{L^\infty(0,T;V^{-1}_a(\Omega))}\\
	\le C(T)\left[\|y_0\|_{L^2(\Omega)}+\|y_1\|_{V^{-1}_a(\Omega)}+\|f_c\|_{L^2(0,T)}+\|f_d\|_{L^2(0,T)}\right].
	\end{multline*}
\end{theorem}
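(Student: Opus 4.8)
The plan is to establish the stated existence, uniqueness, linearity, and continuous-dependence statement by the transposition (duality) method, following the classical Lions--Magenes framework as set up in Definition~\ref{Def 5.2}. First I would fix $T>0$ and record that, by the discussion immediately preceding the theorem, the backward homogeneous problem \eqref{5.3}--\eqref{5.7b} is well-posed: after the time reversal $u(t,x)=w(T-t,x)$ it becomes a forward problem of the type \eqref{2.5.a}--\eqref{2.5.d}, so Lemma~\ref{Lemma 2.1.2} and the semi-group $e^{\mathcal{A}t}$ furnish a unique $V^1_a$-mild solution $w\in C^1([0,T];L^2(\Omega))\cap C([0,T];V^1_{a,0}(\Omega))$ depending continuously on $(w^0_T,w^1_T)\in V^1_{a,0}(\Omega)\times L^2(\Omega)$, with conserved energy $E_w(t)=E_w(T)=\tfrac12(\|w^1_T\|^2_{L^2(\Omega)}+\|w^0_T\|^2_{V^1_{a,0}(\Omega)})$ by Proposition~\ref{Prop 2.2} and \eqref{5.5.2}. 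The second preparatory ingredient is the hidden-regularity (trace) bound $w_x(\cdot,c),w_x(\cdot,d)\in L^2(0,T)$ together with the quantitative estimate $a(c)\|w_x(\cdot,c)\|^2_{L^2(0,T)}+a(d)\|w_x(\cdot,d)\|^2_{L^2(0,T)}\le C(T)\,E_w(T)$, which is exactly Lemma~\ref{Lemma 3.1} applied to $w$ (via the time reversal). These two facts are the engine of the whole argument.

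Next I would define the linear functional
\[
L(w^0_T,w^1_T):=\left<y_1,w(0)\right>_{V^{-1}_a(\Omega);V^1_{a,0}(\Omega)}-\int_{\Omega} y_0 w_t(0)\,dx-a(d)\int_0^T f_d(t)w_x(t,d)\,dt+a(c)\int_0^T f_c(t)w_x(t,c)\,dt
\]
on $V^1_{a,0}(\Omega)\times L^2(\Omega)$, where $w$ is the solution of \eqref{5.3}--\eqref{5.7b} with final data $(w^0_T,w^1_T)$. Using $|\left<y_1,w(0)\right>|\le\|y_1\|_{V^{-1}_a(\Omega)}\|w(0)\|_{V^1_{a,0}(\Omega)}$, $|\int y_0 w_t(0)|\le\|y_0\|_{L^2(\Omega)}\|w_t(0)\|_{L^2(\Omega)}$, the energy conservation bounding $\|w(0)\|_{V^1_{a,0}(\Omega)}$ and $\|w_t(0)\|_{L^2(\Omega)}$ by $\sqrt{2E_w(T)}$, and Cauchy--Schwarz in time on the last two terms together with the trace estimate bounding $\|\sqrt{a(c)}\,w_x(\cdot,c)\|_{L^2(0,T)}$ and $\|\sqrt{a(d)}\,w_x(\cdot,d)\|_{L^2(0,T)}$ by $\sqrt{C(T)E_w(T)}$, one gets $|L(w^0_T,w^1_T)|\le C(T)\big(\|y_0\|_{L^2(\Omega)}+\|y_1\|_{V^{-1}_a(\Omega)}+\|f_c\|_{L^2(0,T)}+\|f_d\|_{L^2(0,T)}\big)\sqrt{E_w(T)}$, i.e. $L$ is continuous on $V^1_{a,0}(\Omega)\times L^2(\Omega)$ with norm controlled by the data. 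By the Riesz representation theorem there is a unique pair $(y(T),y_t(T))\in L^2(\Omega)\times V^{-1}_a(\Omega)$ realizing $L$ in the form of the left-hand side of \eqref{5.2}; letting $T$ vary and using the consistency of these definitions (a standard nesting/restriction argument, since the functional for a smaller final time is obtained from the larger one) defines $y(t)$ and $y_t(t)$ for all $t\ge0$. Linearity of the map $(y_0,y_1,f_c,f_d)\mapsto(y,y_t)$ is immediate from the linearity of $L$ in the data and the uniqueness of the representative, and the asserted a~priori estimate $\|y\|_{L^\infty(0,T;L^2(\Omega))}+\|y_t\|_{L^\infty(0,T;V^{-1}_a(\Omega))}\le C(T)[\cdots]$ follows by taking suprema of the pointwise bounds.

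Finally, I would check the regularity $(y,y_t)\in C([0,T];L^2(\Omega)\times V^{-1}_a(\Omega))$: this is where a little care is needed. One route is to first prove the estimate and represent $y$ for smooth data $(y_0,y_1,f_c,f_d)$ — say $f_c,f_d\in C^2_c(0,T)$ and $(y_0,y_1)\in D(\mathcal{A})$ — where the transposition solution coincides with the classical/mild solution produced by Duhamel's formula for the semi-group $e^{\mathcal{A}t}$ and hence is manifestly continuous in $t$; then density of smooth data together with the just-proved uniform estimate extends continuity to all data in $L^2(\Omega)\times V^{-1}_a(\Omega)\times L^2(0,T)^2$. The main obstacle — and the only genuinely technical point — is this identification of the transposition solution with the semigroup solution on smooth data and the verification that the boundary terms $a(c)\int f_c w_x(\cdot,c)-a(d)\int f_d w_x(\cdot,d)$ arising from the integration by parts across the degeneracy point $x=1$ match \eqref{5.2}; here the transmission conditions \eqref{5.6a}--\eqref{5.7b} and the boundary-flux vanishing relations from Theorems~\ref{Th 1.2.000} and \ref{Th 1.1} are exactly what guarantees that the interior interface at $x=1$ contributes no extra terms, so that the only surviving boundary contributions are those at $x=c$ and $x=d$. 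Everything else — Cauchy--Schwarz, Riesz representation, density — is routine, and the full details are exactly those of \cite{LionsMag,Zuazua90} in the non-degenerate case, transplanted to the weighted spaces $V^1_{a,0}(\Omega)$, $V^{-1}_a(\Omega)$ via the isomorphism $A_a:V^1_{a,0}(\Omega)\to V^{-1}_a(\Omega)$ recorded above.
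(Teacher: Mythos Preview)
Your proposal is correct and in fact spells out precisely the classical Lions--Magenes transposition argument that the paper does not prove but merely cites: the paper states that ``the following theorem is a consequence of the classical results of existence and uniqueness of solutions of nonhomogeneous evolution equations'' and refers to \cite{LionsMag,Zuazua90} for full details. Your outline---well-posedness of the backward problem via Lemma~\ref{Lemma 2.1.2}, the hidden-regularity trace bound from Lemma~\ref{Lemma 3.1}, continuity of the functional $L$, Riesz representation, and density for the $C([0,T])$-regularity---is exactly the scheme those references carry out in the non-degenerate case, transplanted to the weighted setting, and your identification of the transmission conditions as the device ensuring no interface contribution at $x=1$ is the one genuinely new observation needed here.
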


We are now in a position to prove the main result of this section.
\begin{theorem}
	\label{Th 5.5}
	Let $a:\overline{\Omega}\rightarrow\mathbb{R}$ be a weight function satisfying properties (i)--(iii)  and \eqref{4.0.1}--\eqref{4.0.2}. Let $T_a$ be a value defined as in \eqref{4.17}. Then, for every $T>T_a$ and for any $(y_0,y_1)\in L^2(\Omega)\times V^{-1}_a(\Omega)$, there exists a pair of controls $f_c, f_d\in L^2(0,T)$ such that the $V^1_a$-solution of \eqref{5.1.a}--\eqref{5.1.d} (in the sense of transposition) satisfies condition $\left(y(T),y_t(T)\right)\equiv (0,0)$, i.e. the system \eqref{5.1.a}--\eqref{5.1.d} is boundary null controllable in time $T>T_a$.
\end{theorem}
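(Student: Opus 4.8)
The proof will follow the Hilbert Uniqueness Method (HUM) of Lions, combining the inverse observability estimate of Theorem~\ref{Th 4.1.1}, the hidden-regularity bounds \eqref{4.1.1}--\eqref{4.1.1a} of Lemma~\ref{Lemma 3.1}, and the transposition formulation of Definition~\ref{Def 5.2}. Fix $T>T_a$, with $T_a$ as in \eqref{4.17}. For a pair of final data $(\varphi^0_T,\varphi^1_T)\in V^1_{a,0}(\Omega)\times L^2(\Omega)$, let $\varphi$ be the unique $V^1_a$-mild solution of the backward homogeneous transmission problem \eqref{5.3}--\eqref{5.7b} with $\varphi(T)=\varphi^0_T$, $\varphi_t(T)=\varphi^1_T$. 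Applying Lemma~\ref{Lemma 3.1} to $u(t,x):=\varphi(T-t,x)$ shows that $\varphi_x(\cdot,c),\varphi_x(\cdot,d)\in L^2(0,T)$, so that $f_c(t):=-a(c)\varphi_x(t,c)$ and $f_d(t):=a(d)\varphi_x(t,d)$ are admissible controls. Since $(y(T),y_t(T))=(0,0)$ is equivalent to the vanishing of the left-hand side of \eqref{5.2} for every test datum $(w^0_T,w^1_T)\in V^1_{a,0}(\Omega)\times L^2(\Omega)$, substituting the above $f_c,f_d$ into \eqref{5.2} (where $w$ denotes the backward solution with final data $(w^0_T,w^1_T)$) shows that this requirement is equivalent to the operator equation $\Lambda(\varphi^0_T,\varphi^1_T)=\ell_{(y_0,y_1)}$, with
$$\left\langle\Lambda(\varphi^0_T,\varphi^1_T),(w^0_T,w^1_T)\right\rangle:=a(c)^2\int_0^T\varphi_x(t,c)w_x(t,c)\,dt+a(d)^2\int_0^T\varphi_x(t,d)w_x(t,d)\,dt$$
and $\ell_{(y_0,y_1)}(w^0_T,w^1_T):=\left\langle y_1,w(0)\right\rangle_{V^{-1}_a(\Omega);V^1_{a,0}(\Omega)}-\int_\Omega y_0 w_t(0)\,dx$.

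The second step is to check that $\Lambda$ is a bounded, symmetric, coercive bilinear form on the Hilbert space $V^1_{a,0}(\Omega)\times L^2(\Omega)$, and hence an isomorphism onto its dual $V^{-1}_a(\Omega)\times L^2(\Omega)$ by the Lax--Milgram lemma. Boundedness and symmetry follow at once from the Cauchy--Schwarz inequality, from conservation of energy $E_\varphi(t)\equiv E_\varphi(T)=\tfrac12(\|\varphi^0_T\|^2_{V^1_{a,0}(\Omega)}+\|\varphi^1_T\|^2_{L^2(\Omega)})$, and from the upper bounds \eqref{4.1.1}--\eqref{4.1.1a}. Coercivity is where the standing hypotheses enter decisively: with $\lambda:=\min\{a(c)/(1-c),a(d)/(d-1)\}$ one has $a(c)^2\ge\lambda(1-c)a(c)$ and $a(d)^2\ge\lambda(d-1)a(d)$, so Theorem~\ref{Th 4.1.1} gives
$$\left\langle\Lambda(\varphi^0_T,\varphi^1_T),(\varphi^0_T,\varphi^1_T)\right\rangle\ge\lambda\Big[(1-c)a(c)\!\int_0^T\!\varphi_x^2(t,c)\,dt+(d-1)a(d)\!\int_0^T\!\varphi_x^2(t,d)\,dt\Big]\ge\gamma_T E_\varphi(T),$$
where $\gamma_T>0$ precisely because the bracketed coefficient in \eqref{4.1.7} is strictly positive for $T>T_a$ (recall $\mu_{1,a},\mu_{2,a}<2$ by Proposition~\ref{Prop 1.0a}); together with \eqref{5.5.2} this yields the required coercivity on $V^1_{a,0}(\Omega)\times L^2(\Omega)$.

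To conclude, for given $(y_0,y_1)\in L^2(\Omega)\times V^{-1}_a(\Omega)$ the functional $\ell_{(y_0,y_1)}$ is continuous on $V^1_{a,0}(\Omega)\times L^2(\Omega)$, since the solution map of \eqref{5.3}--\eqref{5.7b} sending $(w^0_T,w^1_T)$ to $(w(0),w_t(0))$ is an isometry for the energy norm (energy being conserved) and $y_0\in L^2(\Omega)$, $y_1\in V^{-1}_a(\Omega)$. Hence the operator equation $\Lambda(\varphi^0_T,\varphi^1_T)=\ell_{(y_0,y_1)}$ has a unique solution $(\varphi^0_T,\varphi^1_T)$, and the associated controls $f_c,f_d\in L^2(0,T)$ drive the $V^1_a$-solution by transposition of \eqref{5.1.a}--\eqref{5.1.d} from $(y_0,y_1)$ to $(0,0)$ at time $T$. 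Boundary exact controllability for $T>T_a$ then follows from the linearity and time-reversibility of the system, as noted in the Remark following Definition~\ref{Def 5.1a}.

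The only genuinely delicate point, as opposed to routine bookkeeping, is the reduction in the first step: one must substitute $f_c,f_d$ into the transposition identity \eqref{5.2} and verify that the boundary terms collapse exactly to $\Lambda$, with no surviving contribution from the degeneration point $x=1$. This is where the transmission conditions \eqref{5.6a}--\eqref{5.6b} in the weak case $1/a\in L^1(\Omega)$, or \eqref{5.7a}--\eqref{5.7b} in the strong case, for both $\varphi$ and $w$ must be invoked, so that all interior traces at $x=1$ cancel, while Lemma~\ref{Lemma 3.1} guarantees that the two exterior normal traces are square-integrable in time; handling the two degeneracy regimes uniformly is precisely what the case distinction in Definitions~\ref{Def 5.2} and in \eqref{2.1a}--\eqref{2.1b} was designed to make transparent. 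Everything else — well-posedness of the backward problem, energy conservation, the direct and inverse observability inequalities, and the hidden regularity of the normal derivatives — is already available from Sections~\ref{Sec 2}--\ref{Sec_3}.
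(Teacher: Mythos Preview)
Your proof is correct and follows the same HUM/Lax--Milgram route as the paper. The only difference is a harmless rescaling: the paper sets $f_c=-w_x(t,c)$, $f_d=w_x(t,d)$ and works with the bilinear form weighted by $a(c),a(d)$, whereas you set $f_c=-a(c)\varphi_x(t,c)$, $f_d=a(d)\varphi_x(t,d)$ and obtain the form weighted by $a(c)^2,a(d)^2$; your insertion of $\lambda=\min\{a(c)/(1-c),a(d)/(d-1)\}$ correctly bridges back to the observability estimate \eqref{4.1.7}, so coercivity goes through either way.

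One remark on your final paragraph: the ``delicate reduction'' you describe is not actually needed here. The transposition identity \eqref{5.2} is a \emph{definition} of the solution, and it already contains only the exterior traces at $x=c$ and $x=d$; no interior traces at $x=1$ appear, and nothing has to be shown to cancel at that point when you substitute $f_c,f_d$. The transmission conditions were used earlier---in setting up the semigroup and in deriving the observability inequality---but at this stage the substitution into \eqref{5.2} is purely algebraic. So your proof is complete without that caveat.
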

\begin{proof}
	Let 
	$	\left[y_0\atop y_1\right]\in L^2(\Omega)\times V^{-1}_a(\Omega),\quad
	\left[w^0_T\atop w^1_T\right]$, $	\left[\widehat{w}^0_T\atop \widehat{w}^1_T\right]\in V^1_{a,0}(\Omega)\times L^2(\Omega)$
	be arbitrary pairs. Let $w$ and $\widehat{w}$ be $V^1_a$-mild solutions of the backward problem \eqref{5.3}--\eqref{5.7b} with final conditions $\left[w^0_T\atop w^1_T\right]$  and  	$\left[\widehat{w}^0_T\atop \widehat{w}^1_T\right]$, respectively. Let us define the bilinear form $\Lambda$ on $V^1_{a,0}(\Omega)\times L^2(\Omega)$ as follows
	\begin{gather*}
	\Lambda\left(\left[w^0_T\atop w^1_T\right], \left[\widehat{w}^0_T\atop \widehat{w}^1_T\right]\right):=a(c)\int_0^T w_x(t,c) \widehat{w}_x(t,c)\,dt+a(d)\int_0^T w_x(t,d) \widehat{w}_x(t,d)\,dt,\\ \forall\, 
	\left[w^0_T\atop w^1_T\right],\ 	\left[\widehat{w}^0_T\atop \widehat{w}^1_T\right]\in V^1_{a,0}(\Omega)\times L^2(\Omega).
	\end{gather*}
	Then, in view of estimate \eqref{5.5.1} and representation \eqref{5.5.2}, we deduce that the bilinear form $\Lambda:\left[V^1_{a,0}(\Omega)\times L^2(\Omega)\right]^2\to\mathbb{R}$ is continuous. Moreover, due to Theorem~\ref{Th 4.1.1} and observability inequality \eqref{4.1.7}, this form is coercive on  $V^1_{a,0}(\Omega)\times L^2(\Omega)$ provided $T>T_a$. Thus, by the Lax-Milgram Lemma, variational problem
	\[
	\Lambda\left(\left[w^0_T\atop w^1_T\right], \left[\widehat{w}^0_T\atop \widehat{w}^1_T\right]\right)=\left<y_1,\widehat{w}(0)\right>_{V^{-1}_a(\Omega);V^1_{a,0}(\Omega)}-\int_{\Omega} y_0 \widehat{w}_t(0)\,dx,\ \forall\,	\left[\widehat{w}^0_T\atop \widehat{w}^1_T\right]\in V^1_{a,0}(\Omega)\times L^2(\Omega)
	\]
	has a unique solution $\left[w^0_T\atop w^1_T\right]\in V^1_{a,0}(\Omega)\times L^2(\Omega)$. Then setting $f_c=-w_x(t,c)$, $f_d=w_x(t,d)$, and $T>T_a$, where $w\in C^1\left([0,T]; L^2(\Omega)]\right)\cap C\left([0,T]; V^1_{a,0}(\Omega)\right)$ is a $V_a^1$-mild solution of the backward problem \eqref{5.3}--\eqref{5.7b} with $\left[w^0_T\atop w^1_T\right]$ as the final data, we see that
	\begin{multline}
	\label{5.8}
	a(d)\int_0^T f_d(t) \widehat{w}_x(t,d)\,dt-a(c)\int_0^T f_c(t) \widehat{w}_x(t,c)\,dt\\
	=a(c)\int_0^T w_x(t,c) \widehat{w}_x(t,c)\,dt+a(d)\int_0^T w_x(t,d) \widehat{w}_x(t,d)\,dt
	=\Lambda\left(\left[w^0_T\atop w^1_T\right], \left[\widehat{w}^0_T\atop \widehat{w}^1_T\right]\right)\\
	=\left<y_1,\widehat{w}(0)\right>_{V^{-1}_a(\Omega);V^1_{a,0}(\Omega)}-\int_{\Omega} y_0 \widehat{w}_t(0)\,dx,\quad \forall\,	\left[\widehat{w}^0_T\atop \widehat{w}^1_T\right]\in V^1_{a,0}(\Omega)\times L^2(\Omega).
	\end{multline} 
	On the other hand, if $y$ is the $V_a^1$-solution by transposition of the problem  \eqref{5.1.a}--\eqref{5.1.d}, then equality \eqref{5.2} implies that, for all $\left[\widehat{w}^0_T\atop \widehat{w}^1_T\right]\in V^1_{a,0}(\Omega)\times L^2(\Omega)$, we have
	\begin{multline}
	\label{5.9}
	a(d)\int_0^T f_d(t) \widehat{w}_x(t,d)\,dt-a(c)\int_0^T f_c(t) \widehat{w}_x(t,c)\,dt=\left<y_1,\widehat{w}(0)\right>_{V^{-1}_a(\Omega);V^1_{a,0}(\Omega)}\\-\int_{\Omega} y_0 \widehat{w}_t(0)\,dx
	-\left<y_t(T),\widehat{w}^0_T\right>_{V^{-1}_a(\Omega);V^1_{a,0}(\Omega)} + \int_{\Omega} y(T) \widehat{w}^1_T\,dx.
	\end{multline}
	Comparing the last relations \eqref{5.8}--\eqref{5.9}, we obtain
	\[
	-\left<y_t(T),\widehat{w}^0_T\right>_{V^{-1}_a(\Omega);V^1_{a,0}(\Omega)} + \int_{\Omega} y(T) \widehat{w}^1_T\,dx=0,\quad \forall\,	\left[\widehat{w}^0_T\atop \widehat{w}^1_T\right]\in V^1_{a,0}(\Omega)\times L^2(\Omega).
	\]
	From this we finally deduce that $\left(y(T),y_t(T)\right)\equiv (0,0)$, i.e. the system \eqref{5.1.a}--\eqref{5.1.d} is boundary null controllable in time $T>T_a$.
\end{proof}

As an obvious consequence of this result, we can give the following its generalization to the case when only one boundary point $x=d$ is controlled. So, formally setting $f_c(t)\equiv 0$ in \eqref{5.1.b}, we consider the boundary null controllability problem for the system \eqref{5.1.a}--\eqref{5.1.d} with only one control $f_d$. Then arguing as in the proof of Theorem~\ref{Th 5.5}, we arrive at the following conclusion.
\begin{theorem}
	\label{Th 5.6}
	Let $a:\overline{\Omega}\rightarrow\mathbb{R}$ be a weight function  such that it satisfies properties (i)--(iii) and \eqref{4.0.1}--\eqref{4.0.2}. Assume that the bilinear form 
	\begin{gather*}
	\Lambda\left(\left[w^0_T\atop w^1_T\right], \left[\widehat{w}^0_T\atop \widehat{w}^1_T\right]\right):=a(d)\int_0^T w_x(t,d) \widehat{w}_x(t,d)\,dt,\quad \forall\, 
	\left[w^0_T\atop w^1_T\right],\ 	\left[\widehat{w}^0_T\atop \widehat{w}^1_T\right]\in V^1_{a,0}(\Omega)\times L^2(\Omega)
	\end{gather*}
	is coercive on  $H^1_{a,0}(\Omega)\times L^2(\Omega)$, i.e., there exists a positive constant $C_1>0$ such that
	\[
	\Lambda\left(\left[w^0_T\atop w^1_T\right], \left[w^0_T\atop w^1_T\right]\right)\ge C_1\left[\|w^0_T\|^2_{V^1_{a,0}(\Omega)}+\|w^1_T\|^2_{L^2(\Omega)}\right],\ \forall\, 
	\left[w^0_T\atop w^1_T\right]\in V^1_{a,0}(\Omega)\times L^2(\Omega).
	\]
	Then there exists a control $f_d\in L^2(0,T)$ such that the system \eqref{5.1.a}--\eqref{5.1.d} with $f_c(t)\equiv 0$ is one side boundary null controllable in time $T$. 
\end{theorem}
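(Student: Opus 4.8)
The plan is to repeat, with a single boundary term, the Hilbert Uniqueness Method argument used in the proof of Theorem~\ref{Th 5.5}, with the coercivity of $\Lambda$ that is now assumed as a hypothesis playing exactly the role that the two-sided observability inequality \eqref{4.1.7} played there. First I would set $f_c(t)\equiv 0$ in Definition~\ref{Def 5.2}; then, for every test pair $\left[\widehat w^0_T\atop \widehat w^1_T\right]\in V^1_{a,0}(\Omega)\times L^2(\Omega)$ with associated backward $V^1_a$-mild solution $\widehat w$ of \eqref{5.3}--\eqref{5.7b} (well posed via the substitution $u(t,x)=w(T-t,x)$ and the semigroup of Section~\ref{Sec 2}), the transposition identity \eqref{5.2} reduces to
\begin{multline*}
\left<y_t(T),\widehat w^0_T\right>_{V^{-1}_a(\Omega);V^1_{a,0}(\Omega)}-\int_\Omega y(T)\widehat w^1_T\,dx\\
=\left<y_1,\widehat w(0)\right>_{V^{-1}_a(\Omega);V^1_{a,0}(\Omega)}-\int_\Omega y_0\widehat w_t(0)\,dx-a(d)\int_0^T f_d(t)\widehat w_x(t,d)\,dt.
\end{multline*}

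Next I would check that $\Lambda$ is a well-defined continuous bilinear form on $\bigl[V^1_{a,0}(\Omega)\times L^2(\Omega)\bigr]^2$: this follows from the one-boundary hidden-regularity estimate $a(d)\int_0^T w_x^2(t,d)\,dt\le C\,E_w(T)$ (the analogue of \eqref{4.1.1a} in Lemma~\ref{Lemma 3.1}), the identity \eqref{5.5.2}, and the Cauchy--Schwarz inequality in $L^2(0,T)$. By hypothesis $\Lambda$ is coercive on $V^1_{a,0}(\Omega)\times L^2(\Omega)$, and the map $\left[\widehat w^0_T\atop \widehat w^1_T\right]\mapsto\left<y_1,\widehat w(0)\right>_{V^{-1}_a(\Omega);V^1_{a,0}(\Omega)}-\int_\Omega y_0\widehat w_t(0)\,dx$ is a continuous linear functional on that space, because conservation of energy and \eqref{5.5.2} imply that $\bigl(\widehat w(0),\widehat w_t(0)\bigr)$ has the same $V^1_{a,0}(\Omega)\times L^2(\Omega)$ norm as $\bigl(\widehat w^0_T,\widehat w^1_T\bigr)$. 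Hence the Lax--Milgram lemma yields a unique $\left[w^0_T\atop w^1_T\right]\in V^1_{a,0}(\Omega)\times L^2(\Omega)$ solving
\[
\Lambda\left(\left[w^0_T\atop w^1_T\right],\left[\widehat w^0_T\atop \widehat w^1_T\right]\right)=\left<y_1,\widehat w(0)\right>_{V^{-1}_a(\Omega);V^1_{a,0}(\Omega)}-\int_\Omega y_0\widehat w_t(0)\,dx
\]
for all test pairs $\left[\widehat w^0_T\atop \widehat w^1_T\right]\in V^1_{a,0}(\Omega)\times L^2(\Omega)$.

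To close the loop I would put $f_d:=w_x(\cdot,d)\in L^2(0,T)$, where $w$ is the backward $V^1_a$-mild solution with final data $\left[w^0_T\atop w^1_T\right]$. Then $a(d)\int_0^T f_d(t)\widehat w_x(t,d)\,dt=\Lambda\bigl((w^0_T,w^1_T),(\widehat w^0_T,\widehat w^1_T)\bigr)$, which by the variational equation cancels exactly the term $\left<y_1,\widehat w(0)\right>-\int_\Omega y_0\widehat w_t(0)\,dx$ on the right of the reduced transposition identity above. What remains is $\left<y_t(T),\widehat w^0_T\right>_{V^{-1}_a(\Omega);V^1_{a,0}(\Omega)}-\int_\Omega y(T)\widehat w^1_T\,dx=0$ for every $\left[\widehat w^0_T\atop \widehat w^1_T\right]\in V^1_{a,0}(\Omega)\times L^2(\Omega)$, which forces $\bigl(y(T),y_t(T)\bigr)=(0,0)$; since $(y,y_t)$ is the transposition solution of \eqref{5.1.a}--\eqref{5.1.d} with $f_c\equiv 0$, this is precisely the claimed one-sided boundary null controllability in time $T$.

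The main obstacle is conceptual rather than computational. In Theorem~\ref{Th 5.5} the coercivity of the two-point form $\Lambda$ was \emph{derived} from the observability inequality \eqref{4.1.7} of Theorem~\ref{Th 4.1.1}, but that inequality only bounds the \emph{sum} $(1-c)a(c)\int_0^T y_x^2(t,c)\,dt+(d-1)a(d)\int_0^T y_x^2(t,d)\,dt$ from below; no one-sided observability estimate is available from the preceding sections. This is exactly why coercivity of the single-boundary form must be imposed as a hypothesis here: once it is granted, everything else is a verbatim transcription of the HUM scheme of Theorem~\ref{Th 5.5}, the only genuine verification being the hidden-regularity bound $w_x(\cdot,d)\in L^2(0,T)$ with norm controlled by $E_w(T)$, supplied by Lemma~\ref{Lemma 3.1}.
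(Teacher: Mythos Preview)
Your proposal is correct and follows exactly the approach the paper intends: the paper's own ``proof'' of Theorem~\ref{Th 5.6} is simply the sentence ``arguing as in the proof of Theorem~\ref{Th 5.5}, we arrive at the following conclusion,'' and what you have written is precisely that argument spelled out, with the single-boundary bilinear form, the hidden-regularity bound \eqref{4.1.1a} from Lemma~\ref{Lemma 3.1} for continuity, the assumed coercivity replacing \eqref{4.1.7}, and Lax--Milgram producing the control $f_d=w_x(\cdot,d)$. Your closing remark about why coercivity must be \emph{assumed} rather than derived is also exactly the point.
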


\section*{Acknowledgment}
The first and third author acknowledge financial support by the DFG under contract Le595/31-1: Sustainable Optimal Controls for Nonlinear Partial Differential Equations with Applications. The first author was also partially supported by National Research Foundation of Ukraine (Grant No.~2020.02/0066).

\end{document}